\def\Rp{\mathbb{R}_{+}}             %   non-negative real numbers
\def\F{\mathcal{F}}                 %   feasible set, fourier transform, other sets
\def\normaldist{\mathcal{N}}        %   normal distribution
\def\div{\mathrm{div}}              %   divergence
\def\cov{\mathbf{cov}}              %   covariance
\def\SNR{\text{SNR}}                %   signal-to-noise ratio
\def\ev{\operatorname{\mathbb{E}}}  %   expected value
\def\df{\mathrm{df}}                %   degrees of freedom
\def\diff{\mathrm{d}}                 %   differential
\def\svt{\mathrm{SVT}}              %   singular value thresholding
\def\bsvt{\mathrm{BSVT}}              %  block-wise singular value thresholding
\def\svht{\mathrm{SVHT}}            %   hard-thresholding of the singular values
\def\sure{\textsc{SURE}}            %   stein's unbiased risk estimate
\def\ind{\mathbb{I}}                %   indicator function
\def\id{{\boldsymbol{I}}}           %   identity matrix
\def\idx{\mathcal{I}}               %   index set
\def\real{\mathrm{Re}}              %   real part
\def\imag{\mathrm{Im}}              %   imaginary part
\def\1i{\imath}                     %   imaginary unit
\def\iid{\textrm{iid}}                %   independent, identically distributed
\newcommand{\vc}[1]{{\boldsymbol{#1}}}              %   matrix/vector notation (boldsymbol)
\newcommand{\conj}[1]{\overline{#1}}                %   conjugate
\renewcommand{\exp}[1]{\ensuremath{\text{e}^{#1}}}
\newcommand{\iprod}[2]{\left\langle #1 , #2 \right\rangle}
\newtheorem{theorem}{Theorem}[section]
\newtheorem{lemma}[theorem]{Lemma}
\newtheorem{corollary}[theorem]{Corollary}
\newtheorem{proposition}[theorem]{Proposition}
\newcommand{\R}{\mathbb{R}}
\newcommand{\C}{\mathbb{C}}
\newcommand{\rank}{\operatorname{rank}}
\newcommand{\diag}{\operatorname{diag}}
\def \endprf{\hfill {\vrule height6pt width6pt depth0pt}\medskip}
\renewenvironment{proof}{\noindent {\bf Proof} }{\endprf\par}
\definecolor{eac}{RGB}{200,50,50}
\definecolor{csl}{RGB}{50,200,50}
\definecolor{ejc}{RGB}{255,0,0}
\definecolor{jdt}{RGB}{0,0,255}
\numberwithin{equation}{section}
\title{Unbiased Risk Estimates for Singular Value
  Thresholding\\ and Spectral Estimators%and Applications
}
\author{
Emmanuel J. Cand\`es\thanks{Departments of Statistics and of
    Mathematics, Stanford University, Stanford, CA 94305}
\and Carlos A. Sing-Long \thanks{Institute for Computational and Mathematical
      Engineering, Stanford University, Stanford, CA 94305}
\and Joshua D. Trzasko \thanks{Department of Physiology and Biomedical Engineering,
 Mayo Clinic, Rochester, MN 55905}
}
\date{October, 2012}
\begin{document}

%% ======    HEADER
%%{\centering
%%    \bigskip
%%    %{\bf }\\
%%    \medskip
%%    {\Large\bf Unbiased Risk Estimates for Singular Value Thresholding and Applications}\\
%%    \medskip
%%    {E.~J.~Cand\`{e}s, C.~A.~Sing-Long and J.~D.~Trzasko}\\
%%    \medskip
%%}
%%
%%\bigskip
%%\bigskip
%
\maketitle

\vspace{-0.3in}

\begin{abstract}
  In an increasing number of applications, it is of interest to
  recover an approximately low-rank data matrix from noisy
  observations. This paper develops an unbiased risk
  estimate---holding in a Gaussian model---for any {\em spectral
    estimator} obeying some mild regularity assumptions. In
  particular, we give an unbiased risk estimate formula for singular
  value thresholding (SVT), a popular estimation strategy which
  applies a soft-thresholding rule to the singular values of the noisy
  observations. Among other things, our formulas offer a principled
  and automated way of selecting regularization parameters in a
  variety of problems.  In particular, we demonstrate the utility of
  the unbiased risk estimation for SVT-based denoising of real
  clinical cardiac MRI series data. We also give new results
  concerning the differentiability of certain matrix-valued functions.
\end{abstract}

{\bf Keywords.}  Singular value thresholding, Stein's unbiased risk
estimate (SURE), differentiability of eigenvalues and eigenvectors,
magnetic resonance cardiac imaging.

%\maketitle

%\ejc{Carlos, please insert affiliations and placeholders for abstract
%  and keywords.}\csl{Added.}

\section{Introduction}
\label{section:introduction}

\newcommand{\ourstd}{\tau}

\newcommand{\true}{\vc{X}^0}

Suppose we have noisy observations $\vc{Y}$ about an $m \times n$ data
matrix $\true$ of interest,
\begin{equation}\label{eq:mvn_mean_estimation}
  Y_{ij} = X^0_{ij} +  W_{ij},\qquad W_{ij} \stackrel{\text{iid}}{\sim}
\normaldist(0, \ourstd^2),\quad \begin{array}{l} i = 1,\ldots,m,\\
  j  = 1,\ldots, n.
\end{array}
\end{equation}
We wish to estimate $\true$ as accurately as possible. In this paper,
we are concerned with situations where the estimand has some
structure, namely, $\true$ has low rank or is well approximated by a
low-rank matrix. This assumption is often met in practice since the
columns of $\true$ can be quite correlated. For instance, these
columns may be individual frames in a video sequence, which are
typically highly correlated. Another example concerns the acquisition
of hyperspectral images in which each column of $\true$ is a 2D image
at a given wavelength. In such settings, images at nearby wavelengths
typically exhibit strong correlations. Hence, the special low-rank
regression problem \eqref{eq:mvn_mean_estimation} occurs in very many
applications and is the object of numerous recent studies.

Recently, promoting low-rank has been identified as a promising
tool for denoising series of MR images, such as those that arise in
functional MRI (fMRI)~\cite{Thomas2002}, relaxometry~\cite{Bydder2006},
cardiac MRI~\cite{Trzasko2011}, NMR spectroscopy~\cite{Cadzow1988,Nguyen2011}, and
diffusion-weighted imaging~\cite{Lam2012,Manjon2012}, among
others. % \jdt{Added section about "model-free" nature of SVT} \ejc{We
%   don't know what SVT is yet so I think it would be best at the top of
%   Section 2.} In particular, SVT is a computationally-straightforward
% yet powerful denoising strategy for MRI applications where
% spatio-temporal/parametric behavior is either {\it a priori} unknown
% or else defined accordingly to a complicated nonlinear model that may
% be numerically challenging to work with.
In dynamic applications like cine cardiac
imaging, where ``movies'' of the beating heart are created,
neighboring tissues tend to exhibit similar motion profiles through
time due to their physical connectedness.  The diversity of temporal
behaviors in this settings will, by nature, be limited, and the
Casorati matrix (a matrix whose columns comprise vectorized frames of
the image series) formed from this data will be low-rank~\cite{Liang2007}. For example, in breath-hold cine cardiac
imaging, background tissue is essentially static and the large
submatrix of the Casoratian corresponding to this region is
very well approximated by a rank-1 matrix.

\subsection{Singular value thresholding}

Whenever the object of interest has (approximately) low rank, it is
possible to improve upon the naive estimate $\hat{\vc{X}}_0 = \vc{Y}$
by regularizing the maximum likelihood. A natural approach consists in
truncating the singular value decomposition of the observed matrix
$\vc{Y}$, and solve
\begin{equation}
\label{eq:hard}
\svht_\lambda(\vc{Y}) = \arg\min_{\vc{X}\in\R^{m\times n}} \frac{1}{2}\|\vc{Y} - \vc{X}\|_F^2 + \lambda\, \rank(\vc{X}),
\end{equation}
where $\lambda$ a positive scalar.  As is well known, if
\begin{equation}
  \label{eq:svd}
  \vc{Y} = \vc{U} \vc{\Sigma} \vc{V}^* = \sum_{i = 1}^{\min(m, n)}
  \sigma_i \vc{u}_i \vc{v}_i^*
\end{equation}
is a singular value decomposition for $\vc{Y}$, the solution is given
by retaining only the part of the expansion with singular values
exceeding $\lambda$,
\[
\svht_\lambda(\vc{Y}) = \sum_{i = 1}^{\min(m, n)}\ind(\sigma_i > \lambda) \,
\vc{u}_i \vc{v}_i^*.
\]
%(Here and below, $\vc{X}^*$ is the adjoint of $\vc{X}$ which is the
%transpose when $\vc{X}$ is real valued.) 
In other words, one applies a
hard-thresholding rule to the singular values of the observed matrix
$\vc{Y}$. Such an estimator is discontinuous in $\vc{Y}$ and a popular
alternative approach applies, instead, a soft-thresholding rule to the
singular values:
\begin{equation}
  \label{eq:svt}
 \svt_\lambda(\vc{Y}) = \sum_{i = 1}^{\min(m, n)} (\sigma_i - \lambda)_+ \,
\vc{u}_i \vc{v}_i^*;
\end{equation}
that is, we shrink the singular values towards zero by a constant
amount $\lambda$. Here, the estimate $\svt_\lambda(\vc{Y})$ is
Lipschitz continuous. This follows from the fact that the singular
value thresholding operation \eqref{eq:svt} is the prox of the nuclear
norm $\|\cdot\|_*$ (the nuclear norm of a matrix is sum of its
singular values), i.e.~is the unique solution to
\begin{equation}\label{eq:svt_as_prox}
  \text{min} \quad  \frac{1}{2}\|\vc{Y} - \vc{X}\|_F^2 + \lambda \|\vc{X}\|_{*}.
\end{equation}
The focus of this paper is on this smoother estimator referred to as
the singular value thresholding ($\svt$) estimate.

\subsection{A SURE formula}

The classical question is, of course, how much shrinkage should be
applied. Too much shrinkage results in a large bias while too little
results in a high variance. To find the correct trade-off, it would be
desirable to have a method that would allow us to compare the quality
of estimation for different values of the parameter
$\lambda$. Ideally, we would like to select $\lambda$ as to minimize
the mean-squared error or risk
\begin{equation}\label{eq:risk_mse}
  \text{MSE}(\lambda) = \ev\|\true - \svt_\lambda(\vc{Y})\|_F^2.
\end{equation}
% $\lambda$ will
% effect the estimation, and thus it is important to have a method to
% compare its quality for different values of $\lambda$.  For this
% purpose, consider the {\it risk} of using $\svt_\lambda(\vc{Y})$ to
% estimate $\true$ as
% \begin{equation}\label{eq:risk_mse}
%     R(\lambda) = \ev\|\true - \svt_\lambda(\vc{Y})\|_F^2
% \end{equation}
% which measures the expected mean-squared deviation from the true value.
% Ideally, one would like to select $\lambda$ as to minimize the risk.
Unfortunately, this cannot be achieved since the expectation in
\eqref{eq:risk_mse} depends on the true $\true$, and is thus
unknown.
% cannot be known in practice.
Luckily, when the observations follow the model
\eqref{eq:mvn_mean_estimation},
%  and the estimator satisfies some integrability conditions,
it is possible to construct an {\it unbiased} estimate of the risk,
namely, Stein's Unbiased Risk Estimate (SURE)~\cite{Stein81} given by
\begin{equation}\label{eq:sure_for_svt}
  \sure(\svt_\lambda)(\vc{Y}) = - mn\ourstd^2 + \sum_{i = 1}^{\min(m, n)}\min(\lambda^2, \sigma_i^2) + 2\ourstd^2\div\, (\svt_\lambda(\vc{Y})),
\end{equation}
where $\{\sigma_i\}_{i = 1}^{n}$ denote the singular values of
$\vc{Y}$. Above, `$\div$' is the divergence of the nonlinear mapping
$\svt_\lambda$, which is to be interpreted in a weak sense. Roughly
speaking, it can fail to exist on negligible sets. The main
contribution of this paper is to provide a closed-form expression for
the divergence of this estimator.
% difficulty
% is to determine a weak divergence of the estimator, hopefully in
% closed-form.  The main contribution of this paper is to provide a
% suitable expression for the divergence.
We prove that in the
real-valued case,
\begin{equation}\label{eq:weak_div_for_sure}
  \div\, (\svt_\lambda (\vc{Y})) = \sum_{i = 1}^{\min(m,n)}\left[\ind(\sigma_i > \lambda) + |m-n|\left(1 - \frac{\lambda}{\sigma_i}\right)_+\right] + 2\sum_{i \neq j, i,j = 1}^{\min(m,n)} \frac{\sigma_i(\sigma_i - \lambda)_+}{\sigma_i^2 - \sigma_j^2},
\end{equation}
when $\vc{Y}$ is {\it simple}---i.e., has no repeated singular
values---and $0$ otherwise, say, is a valid expression for the weak
divergence. Hence, this simple formula can be used
in~\eqref{eq:sure_for_svt}, and ultimately leads to the determination
of a suitable threshold level by minimizing the {\it estimate} of the
risk, which only depends upon the observed data.

We pause to present some numerical experiments to assess the quality
of SURE as an estimate of the risk.
 % \csl{To obtain nice curves, I
%   modified the experimental setup slightly: (i) The matrices are now
%   normalized, so that $\|\true_i\|_F = 1$, $i = 1,\ldots, 4$; (ii) I
%   also modified the spectrum of the third and fourth matrices; and
%   (iii) I changed the values for the SNR.}
We work with four matrices $\true_i$, $ i = 1,\ldots, 4$, of size
$200\times 500$. Here, $\true_1$ has full rank; $\true_2$ has rank
$100$; $\true_3$ has rank $10$; and $\true_4$ has singular values
equal to $\sigma_i = \sqrt{200}/(1+\exp{(i - 100)/20})$, $i =
1,\ldots, 200$. Finally, each matrix is normalized so that
$\|\true_i\|_F = 1$, $i = 1,\ldots, 4$. Next, two methods are used to
estimate the risk~\eqref{eq:risk_mse} of $\svt_\lambda$ seen as a
function of $\lambda$. The
first methods uses
\begin{equation*}
    \hat{R}_i(\lambda) = \frac{1}{50}\sum_{j = 1}^{50} \|\svt_\lambda(\vc{Y}_j^{(i)}) - \true_i\|_F^2,
\end{equation*}
where $\{\vc{Y}^{(j)}_{i}\}_{j = 1}^{50}$ are independent samples
drawn from model~\eqref{eq:mvn_mean_estimation} with $\true =
\true_i$. We set this to be the value of reference. The second uses
$\sure(\svt_\lambda)(\vc{Y})$, where $\vc{Y}$ is drawn from
model~\eqref{eq:mvn_mean_estimation} independently from
$\{\vc{Y}^{(i)}_{j}\}$. Finally, in each case we work with values of
the signal-to-noise ratio, defined here as $\SNR =
\|\true\|_F/\sqrt{mn}\ourstd = 1/\sqrt{mn}\ourstd$, and set $\SNR =
0.5, 1, 2$ and $4$.  The results are shown in
Figure~\ref{figure:sure_vs_risk}, where one can see that SURE remains
very close to the true value of the risk, even though it is calculated
from a single observation.  Matlab code reproducing the figures is
available and computing SURE formulas for various spectral estimators
is available at
\url{http://www-stat.stanford.edu/~candes/SURE_SVT_code.zip}.
\begin{figure}
    \centering
    \subfigure[]{\includegraphics[width=0.45\textwidth]{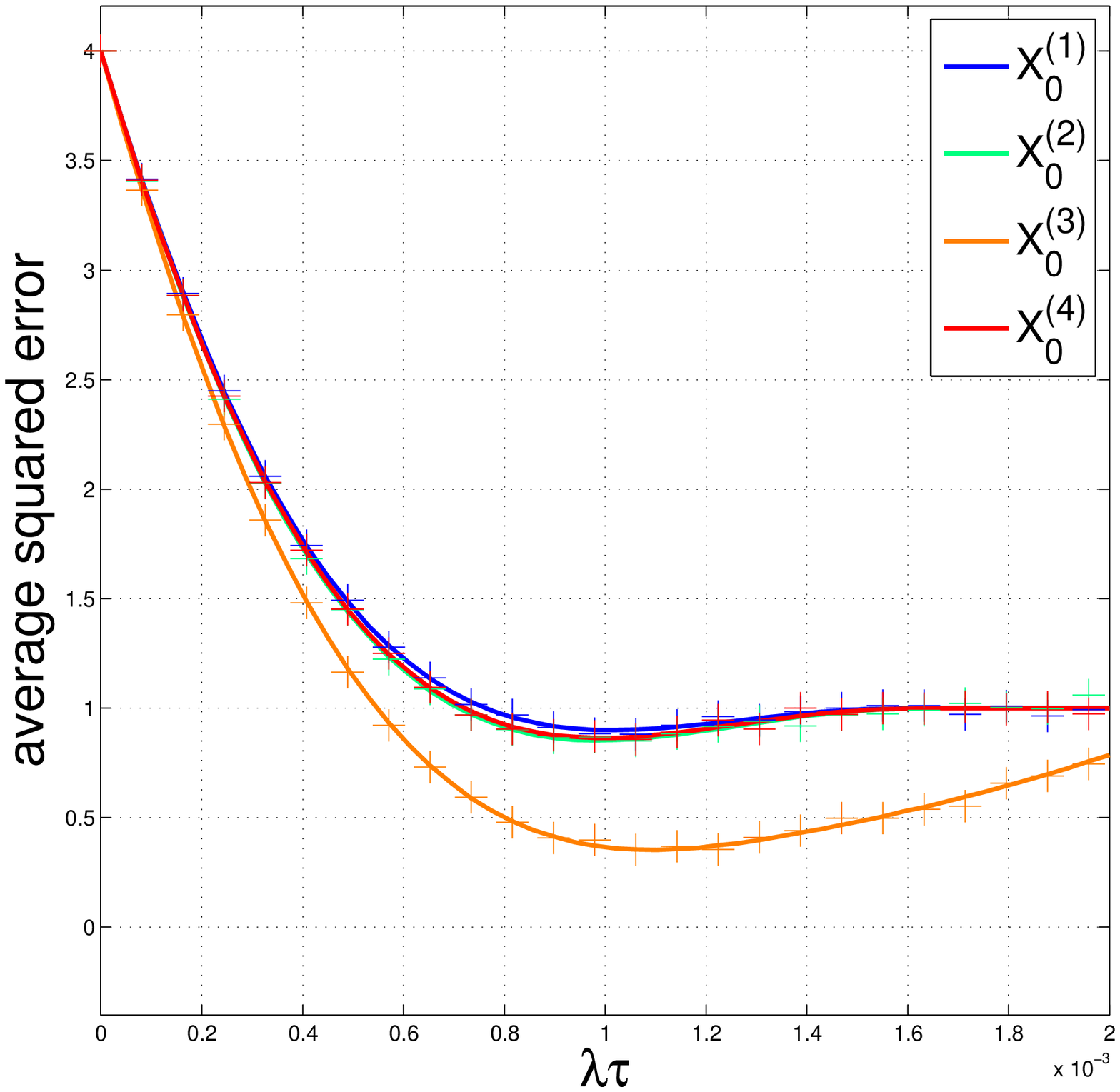}}
    \subfigure[]{\includegraphics[width=0.45\textwidth]{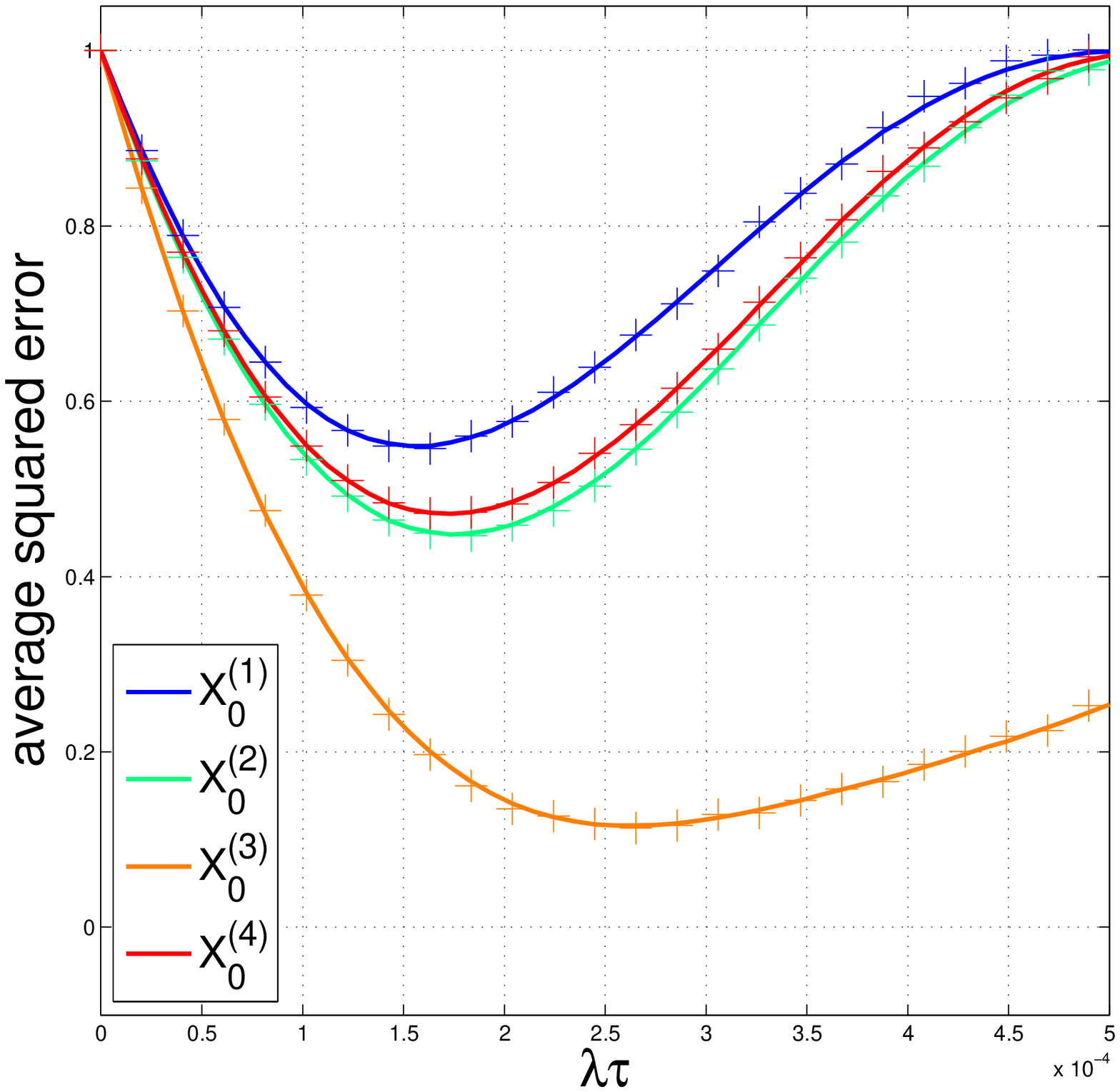}}
    \vfill
    \subfigure[]{\includegraphics[width=0.45\textwidth]{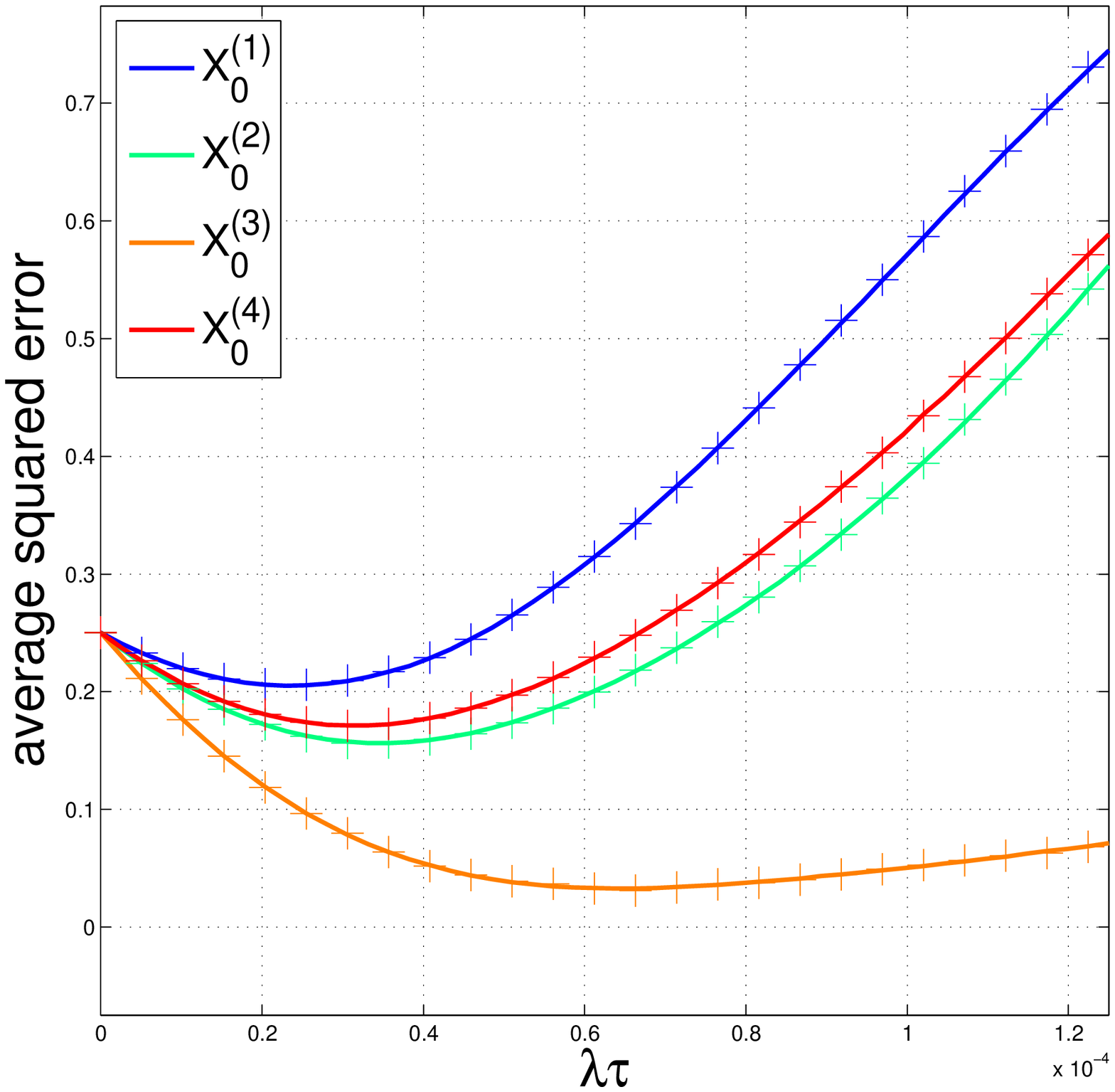}}
    \subfigure[]{\includegraphics[width=0.45\textwidth]{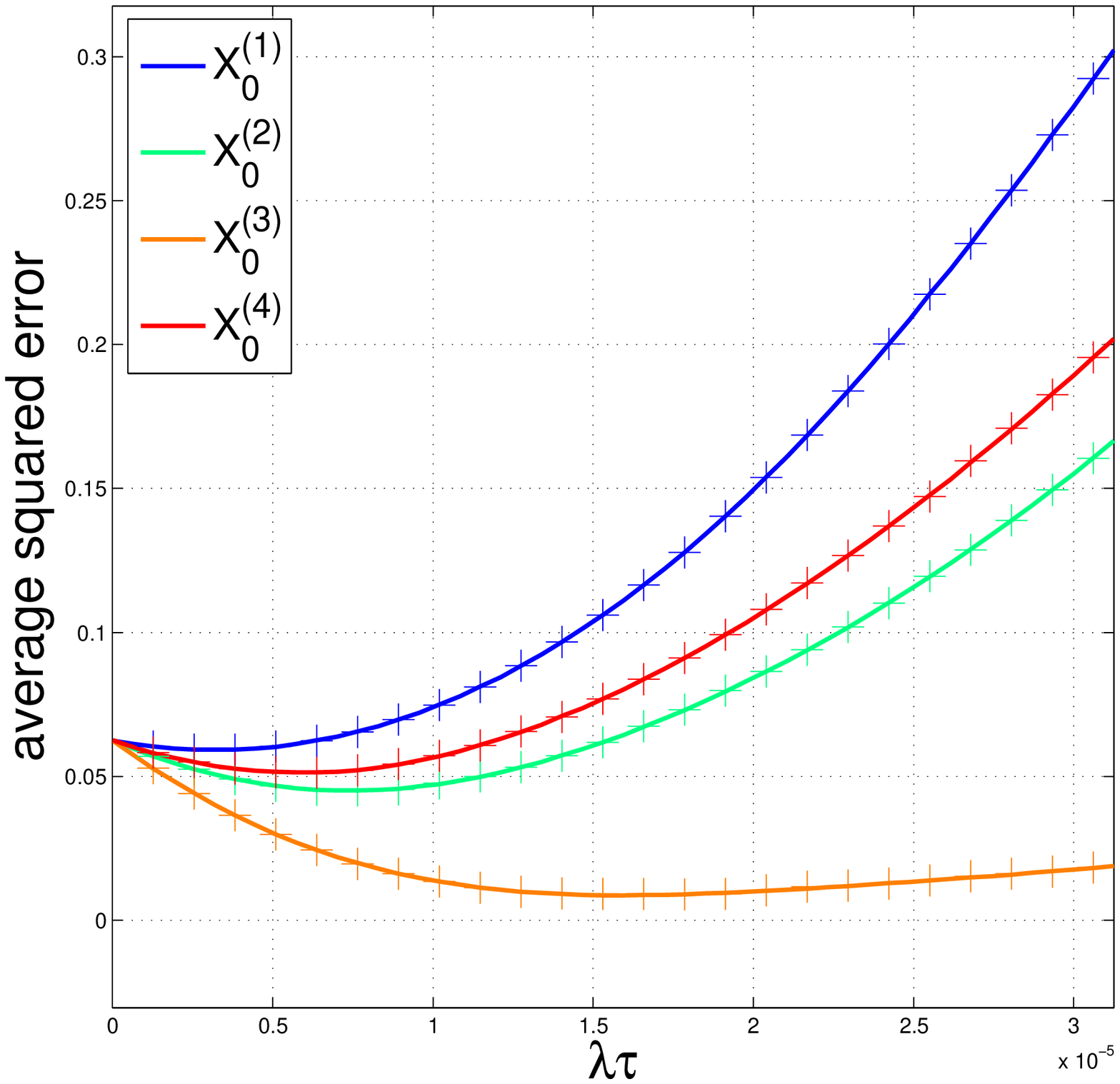}}
    \caption{Comparison of the risk estimate using Monte Carlo (solid
      line) and SURE (cross) versus $\lambda\times \tau$ for
      $\true\in\R^{200\times 500}$ and: (a) $\SNR = 0.5$; (b)
      $\SNR = 1$; (c) $\SNR = 2$; and (d) $\SNR = 4$. The colors
      indicate the matrices used in each case. As we can see, SURE
      follows closely the Monte Carlo estimates, even though it
      requires only one observation to estimate the risk. }
\label{figure:sure_vs_risk}
\end{figure}

In MR applications, observations can take on complex values.  Indeed,
MRI scanners employ a process known as quadrature detection (two
orthogonal phase-sensitive detectors) to observe both the magnitude
and phase of rotating magnetization that is induced by radio-frequency
(RF) excitation~\cite{Liang2000}. Quadrature detection both increases
SNR and allows for the encoding of motion information like flow. MRI
noise, which is Gaussian, is also complex valued. In general, MRI
noise can also be assumed \iid, noting that inter-channel correlation
in parallel MRI data can be removed via Cholesky
pre-whitening. Thus, model~\eqref{eq:mvn_mean_estimation} has to be modified as
\begin{equation}\label{eq:mvn_mean_estimation_cpx}
  \vc{Y} = \true + \vc{W},\qquad \real(W_{ij}),\,\imag(W_{ij}) \stackrel{\text{iid}}{\sim} \normaldist(0, \ourstd^2), % \quad i = 1,\ldots,m,\, j = 1,\ldots, n,
\end{equation}
where the real and imaginary parts are also independent. In this case,
% We can
% rewrite this model in vector form as
% \begin{equation}\label{eq:mvn_mean_estimation_vector_cpx}
%     \begin{bmatrix}\real(\vc{Y}) \\ \imag(\vc{Y})\end{bmatrix} = \begin{bmatrix} \real(\true) \\ \imag(\true)\end{bmatrix} + \begin{bmatrix}\real(\vc{W})\\ \imag(\vc{W})\end{bmatrix},
% \end{equation}
SURE becomes
\begin{equation*}
  \sure(\svt_\lambda)(\vc{Y}) = -2mn\ourstd^2 + \sum_{i = 1}^{\min(m, n)}\min(\lambda^2, \sigma_i^2) + 2\ourstd^2\div\, (\svt_\lambda(\vc{Y})).
\end{equation*}
We also provide an expression for the weak divergence in this context, namely,
\begin{equation}\label{eq:weak_div_for_sure_cpx}
  \div\, (\svt_\lambda(\vc{Y})) = \sum_{i = 1}^{\min(m,n)}\left[\ind(\sigma_i > \lambda) + (2|m-n| + 1)\left(1 - \frac{\lambda}{\sigma_i}\right)_+\right]+ 4\sum_{i\neq j,\, i, j =
    1}^{\min(m, n)} \frac{\sigma_i (\sigma_i - \lambda)_+}{\sigma_i^2 - \sigma_j^2},
\end{equation}
when $\vc{Y}$ is simple, and 0 otherwise. This formula can be readily
applied to MR data as we shall see in Section~\ref{section:mri_applications}. The decomposition into real and
imaginary parts would suggest that the divergence would be
proportional to twice the divergence in the real case. However, this
is not the case. The most significant difference is that there is a
contribution of the inverse of the singular values even when the
matrix is square.

\subsection{Extensions}

The formulae
\eqref{eq:weak_div_for_sure}--\eqref{eq:weak_div_for_sure_cpx} have
applications beyond SURE. For instance, the divergence of an
estimation procedure arises naturally when trying to estimate the
degrees of freedom associated with it.
%this procedure.
Consider the estimate $\svt_\lambda(\vc{Y})$ of $\true$ when the observation
$\vc{Y}$ comes from an additive error model. The degrees of
freedom~\cite{Efron04,Stein81} are defined as
\begin{equation*}
  \df(\svt_\lambda) = \sum_{i = 1}^{m}\sum_{j = 1}^{n}\cov\left(\svt_{\lambda}(\vc{Y})_{ij}, Y_{ij}\right).
\end{equation*}
When the observations $\vc{Y}$ follow model
\eqref{eq:mvn_mean_estimation}, it is possible to write the degrees of
freedom as
\begin{equation*}
    \df(\svt_\lambda) = \ev\{\div(\svt_\lambda(\vc{Y}))\}.
\end{equation*}
Therefore, the expression we provide is also useful to estimate or
calculate the degrees of freedom of singular value
thresholding. % \ejc{Carlos is looking at carrying the
 % expectation---perhaps in an asymptotic sense---by using results from
%  James~\cite{James64}.}

Finally, although our work focuses on $\svt$, our methods extend
beyond this particular estimator. Consider estimators given by {\it
  spectral functions}.  These act on the singular values and take the
form
\begin{equation}\label{eq:spectral_function}
f(\vc{Y}) = \sum_{i = 1}^{\min(m, n)} f_i(\sigma_i) \vc{u}_i \vc{v}_i^* := \vc{U} f(\vc{\Sigma}) \vc{V}^*, \quad \text{for all } \vc{Y}\in\R^{m\times n},
\end{equation}
where $\vc{Y} = \vc{U} \vc{\Sigma} \vc{V}^*$ is any SVD ($\svt$ is in
this class).  Our methods show that
these functions admit a SURE formula, given by
\[
\sure(f)(\vc{Y}) = -mn\ourstd^2 + \|f(\vc{Y}) - \vc{Y}\|_F^2 +
2\ourstd^2\div\, (f(\vc{Y})),
\]
and that under mild assumptions there exists a closed-form for their
divergence:
\begin{equation}
\label{eq:gen-div}
    \div\, (f(\vc{X})) = \sum_{i = 1}^{\min(m, n)} \left(f'_i(\sigma_i) + |m-n|\frac{f'_i(\sigma_i)}{\sigma_i}\right)+ 2\sum_{i\neq j,\, i, j = 1}^{\min(m, n)} \frac{\sigma_i f_i(\sigma_i)}{\sigma_i^2 - \sigma_j^2}.
  \end{equation}
  This is of interest because such estimators arise naturally in
  regularized regression problems. For instance, let $J:\R^{m\times
    n}\mapsto \R$ be a lower semi-continuous, proper convex function
  of the form
\begin{equation*}
  J(\vc{X}) = \sum_{i = 1}^{\min(m, n)} J_i(\sigma_i(\vc{X})),
\end{equation*}
Then, for $\lambda > 0$ the estimator
\begin{equation}\label{eq:spectral_function_prox}
  f_\lambda(\vc{Y}) = \arg\min_{\vc{X}\in\R^{m\times n}} \frac{1}{2}\|\vc{Y} - \vc{X}\|_F^2 + \lambda J(\vc{X})
\end{equation}
is spectral. Hence, \eqref{eq:spectral_function_prox} can be used
broadly.

\subsection{Connections to other works}

During the preparation of this manuscript, the conference paper
\cite{Deledalle12} came to our attention and we would like to point
out some differences between this work and
ours. In~\cite{Deledalle12}, the authors propose to recursively {\it
  estimate} the divergence of an estimator given
by~\eqref{eq:spectral_function_prox}. This is done by using proximal
splitting algorithms. To this end, they provide an expression for the
directional subdifferential of a matrix-valued spectral function. The
divergence is then estimated by averaging subdifferentials taken along
random directions at each iteration of the proximal algorithm. Our
approach is obviously different since we provide closed-form formulas
for the divergence that are simple to manipulate and easy to
evaluate. This has clear numerical and conceptual advantages. In
particular, since we have a closed-form expression for SURE, it
becomes easier to understand the risk of a family of
estimators. Finally, we also address the case of complex-valued data
that seems out of the scope of~\cite{Deledalle12}. %  \footnote{ We also

\subsection{Content}

The structure of the paper is as follows. In
Section~\ref{section:mri_applications} we present applications in MRI
illustrating the advantages of choosing the threshold in a disciplined
fashion. Section~\ref{section:sure_for_svt} provides precise
statements, and a rigorous justification for
\eqref{eq:weak_div_for_sure} and \eqref{eq:weak_div_for_sure_cpx}.
Section~\ref{section:general_theory} deals with the differentiability
of spectral functions~\eqref{eq:spectral_function}, and thus supports
Section~\ref{section:sure_for_svt}. We conclude with a short
discussion of our results, and potential research directions in
Section~\ref{section:discussion}.

\section{Applications in Magnetic Resonance Imaging (MRI)}
\label{section:mri_applications}

SVT is a computationally-straightforward, yet, powerful denoising
strategy for MRI applications where spatio-temporal/parameteric
behavior is either {\it a priori} unknown or else defined accordingly
to a complicated nonlinear model that may be numerically challenging
to work with. A practical challenge in most rank-based estimation
problems in MRI (and inverse problems in general), however, lies in
the selection of the regularization or threshold parameter. Defining
an automated, disciplined, and consistent methodology for selecting
this parameter inherently improves clinical workflow both by
accelerating the tuning process through the use of optimization-based,
rather than heuristic, search strategies and by freeing the MRI
scanner technician so that they can focus on other patient-specific
tasks. Moreover, eliminating the human element from the denoising
process mitigates inter- and intra-operator variability, and thus
raises diagnostic confidence in the denoising results since images are
wholly reproducible.

\subsection{Noise reduction in cardiac MRI series}

Here, we demonstrate a potential use of the SVT unbiased risk estimate
for automated and optimized denoising of dynamic cardiac MRI series.
Dynamic cardiac imaging is performed either in cine or real-time
mode. Cine MRI, the clinical gold-standard for measuring cardiac
function/volumetrics~\cite{Bogaert2005}, produces a movie of roughly 20 cardiac phases over a single cardiac
cycle (heart beat). However, by exploiting the semi-periodic nature
of cardiac motion, it is actually formed over many heart beats. Cine
sampling is gated to a patient's heart beat, and as each data
measurement is captured it is associated with a particular cardiac
phase. This process continues until enough data has been collected
such that all image frames are complete. Typically, an entire 2D cine
cardiac MRI series is acquired within a single breath hold (less than
30 secs).

In real-time cardiac MRI, an image series covering many heart beats is
generated. Although the perceived temporal resolution of this series
is coarser than that of a cine series, the ``temporal footprint''
of each frame is actually shorter since it is only comprised of
data from one cardiac cycle. Real-time cardiac MRI is of increasing
clinical interest for studying transient functional processes such as
first-pass myocardial perfusion, where the hemodynamics of an
intravenously-injected contrast bolus are visualized. First-pass
myocardial perfusion proffers visualization of both damage to heart
muscle as well as coronary artery blockage. Typically, a real-time
cardiac MRI study will exceed feasible breath-hold time and
respiratory motion may be visible.

Simultaneously achieving high spatial resolution and SNR is
challenging in both cine and real-time cardiac MRI due to the dynamic
nature of the target signal. Signal averaging (NEX~$\geq 1$), a
standard MRI techniques for noise reduction, is infeasible in
real-time imaging and undesirable in cine imaging due to potential
misregistration artifacts, since it cannot be executed within a single
breath-hold. Real-time acquisitions, which are generally based on
gradient recalled echo (GRE) protocols, have inherently low SNR due to
their use of very short repetition (TR) and echo times (TE). Cine
acquisitions use either standard GRE or balanced steady-state free
precession (bSSFP) protocols. When imaging with 1.5 T (Tesla)
magnets, bSSFP cine sequences can often yield sufficient SNR.
However, unlike many other MRI protocols, SSFP sequences do not
trivially gain SNR when moved to higher-field systems ($\geq$ 3.0 T)~\cite{Oshinski2010} which is an emerging clinical trend. As magnetic field
strength is increased, the RF excitation flip angle used in a bSSFP
sequence must be lowered to adhere to RF power deposition (SAR) safety
limits. This results in weaker signal excitation which can mitigate
gains in bulk magnetization. Poor receiver coil sensitivity at the
heart's medial location and signal loss due to iron
overload (hemochromatosis), among other factors, can further
reduce SNR in both imaging strategies. Beyond confounding visual
radiological evaluation, even moderate noise levels in a cardiac image
can also degrade the performance of automated segmentation methods
that are used for quantitative cardiac function evaluation. Therefore, effective denoising techniques that
preserve the morphological and dynamic profiles of cardiac image
series are clinical valuable.

Consider a series of $t$ separate $n\times{n}$ 2D MR images. To
utilize SVT to denoise this data, it must first be transformed into a
$n^{2}\times{t}$ Casorati matrix, $\vc{Y} = \true + \vc{W}$.
% \ejc{Perhaps this can be moved to the introduction section (second
%   placegolder)?}  \jdt{Agreed, I've moved this part up to Section 1}
%  In MRI, both the target signal $\vc{X}_{0}$ and its
%corrupting Gaussian noise process $\vc{W}$ are complex.  Noise is
%presumed identically and independently distributed, noting that
%inter-channel correlation in parallel MRI data can be removed via
%Cholesky pre-whitening [REF].
In many MRI scenarios, spatial
dimensionality will greatly exceed temporal/parametric dimensionality
($n^2\gg t$) and $\vc{Y}$ typically is a very thin matrix. Due to
limited degrees-of-freedom, even optimally-parameterized SVT may result
in temporal/parametric blurring. One solution to this problem is to
analyze the image series in a spatial block-wise manner (see Figure~\ref{figure:local_svt}) rather than globally~\cite{Trzasko2011}.

\begin{figure}[ht!]
\centering
\includegraphics[width=0.45\textwidth]{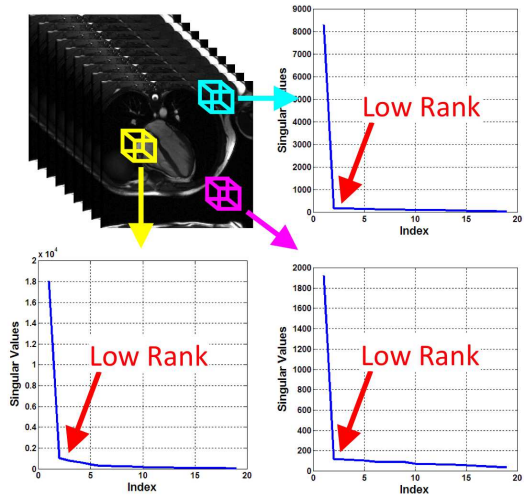}
\caption{
Singular values of the Casorati matrices formed from three different $8\times 8\times 19$ block sets extracted from a cine cardiac sequence, including: a static region (cyan); a dynamic region (yellow); and background noise (magenta).
}
\label{figure:local_svt}
\end{figure}

Let $\vc{R}_{b}$ be a binary operator that extracts
$k^{2}$ rows from a matrix corresponding to a $k\times{k}$ spatial
block, specified by index $b$, within each image. Block-wise SVT can
be defined as
\begin{equation}
\label{eq:llr_prox}
\bsvt_{\lambda}(\vc{Y}) = c^{-1}\sum_{b\in\Omega} \vc{R}_{b}^{*}\, \svt_{\lambda}(\vc{R}_{b}\vc{Y}),
\end{equation}
where $\Omega$ denotes a set of (potentially overlapping) blocks that uniformly tiles the image
domain, i.e., $\sum_{b\in\Omega} \vc{R}_{b}^{*} \vc{R}_{b} =
c\id_{n^2\times{n^2}}$, $c>0$. In words,~\eqref{eq:llr_prox} performs
SVT on a family of submatrices of $\vc{Y}$ and accumulates a weighted
sum of the results. Of course, for $k=n$ and
$|\Omega|=1$,~\eqref{eq:llr_prox} resorts to standard SVT. The
unbiased risk estimator developed in the previous section readily
extends for this generalized SVT model. By linearity,
\begin{equation}
\label{eq:div_G}
\div\, \bsvt_{\lambda}(\vc{Y}) = c^{-1}\sum_{b\in\Omega} \div\, \vc{R}_{b}^{*}\, \svt_{\lambda}(\vc{R}_{b}\vc{Y}).
\end{equation}
Extending the identity~(10) from~\cite{Blu2007} for matrices then asserts
\begin{equation}
\label{eq:div_G_a}
\div\, \vc{R}_{b}^{*}\, \svt_{\lambda}(\vc{R}_{b}\vc{Y})
= \sum_{ij} \frac{\partial (\vc{R}_{b}^{*}\, \svt_{\lambda}(\vc{R}_{b}\vc{Y}))_{ij}}{\partial Y_{ij}}
= \sum_{ij} \frac{\partial\, \svt_{\lambda}(\vc{R}_{b}\vc{Y})_{ij}}{\partial (\vc{R}_{b}\vc{Y})_{ij}} (\vc{R}_{b}\vc{R}_{b}^{*})_{ii}.
\end{equation}
Now, observing that $\vc{R}_{b}\vc{R}_{b}^{*}=\id _{k^2\times{k^2}}$,
$\forall{b}\,\in\Omega$, we see that $\div\, \vc{R}_{b}^{*}\,
\svt_{\lambda}(\vc{R}_{b}\vc{Y}) = \div\,
\svt_{\lambda}(\vc{R}_{b}\vc{Y})$, where the latter term is as in~\eqref{eq:weak_div_for_sure}
for $\vc{Y}\in\R^{n^2\times{t}}$ or~\eqref{eq:weak_div_for_sure_cpx} for
$\vc{Y}\in\C^{n^2\times{t}}$. Define the mean-squared error of
$\bsvt_{\lambda}$ as
\begin{equation}
\label{eq:risk_mse2}
\text{MSE}(\lambda) = \ev\|\vc{X}_0 - \bsvt_{\lambda}(\vc{Y})\|_F^2,
\end{equation}
and the singular value decomposition $\vc{R}_{b}\vc{Y} =
\vc{U}_{b}\vc{\Sigma}_{b}\vc{V}_{b}^{*}$. An unbiased estimator of~\eqref{eq:risk_mse2} is
% and let $\{\sigma_{b,i}\}_{i = 1}^{k}$ denote the singular values of $\vc{R}_b\vc{Y}$. An unbiased estimator of~\eqref{eq:risk_mse2} is
\begin{equation}
\label{eq:sure_for_svt2}
\sure(\bsvt_\lambda)(\vc{Y}) = -\beta{mn}\tau^{2}
+c^{-2} \left\|
  \sum_{b\in\Omega} \vc{R}_{b}^{*}\vc{U}_{b}\mathcal{H}_{\lambda}(\vc{\Sigma}_{b})\vc{V}_{b}^{*}
  \right\|_F^2
 + 2\tau^{2}c^{-1} \sum_{b\in{\Omega}} \div\, \svt_\lambda(\vc{R}_{b}\vc{Y}),
\end{equation}
where $\mathcal{H}(\vc{\Sigma}_{b})_{ij} =
\min(\lambda,\vc{\Sigma}_{ij}) = \ind(i=j)\min(\lambda,\sigma_{i})$
and $\sigma_{i} = (\Sigma_{b})_{ii}$. For
$\vc{Y}\in\R^{n^2\times{t}}$, $\beta=1$; otherwise, for
$\vc{Y}\in\C^{n^2\times{t}}$, $\beta=2$.
%\begin{equation}
%\label{eq:sure_for_svt2}
%\sure(\bsvt_\lambda)(\vc{Y}) = -\beta{mn}\tau^{2}
%+c^{-2}
%  \sum_{b\in\Omega} \sum_{i = 1}^{k} \min(\lambda^2, \sigma_{b,i}^2)
% + 2\tau^{2}c^{-1} \sum_{b\in{\Omega}} \div\, \svt_\lambda(\vc{R}_{b}\vc{Y}),
%\end{equation}
%where $\beta = 1$ for
%$\vc{Y}\in\R^{n^2\times{t}}$, and $\beta = 2$ for $\vc{Y}\in\C^{n^2\times{t}}$.

We now present three MRI examples, one on simulated data and two on
real clinical data, and demonstrate the utility of the developed
unbiased risk estimators for automatic parameterization of SVT-based
denoising.

\subsubsection*{Example 1: PINCAT Numerical Phantom}

\begin{figure}[t!]
\centering
\includegraphics[width=0.95\textwidth]{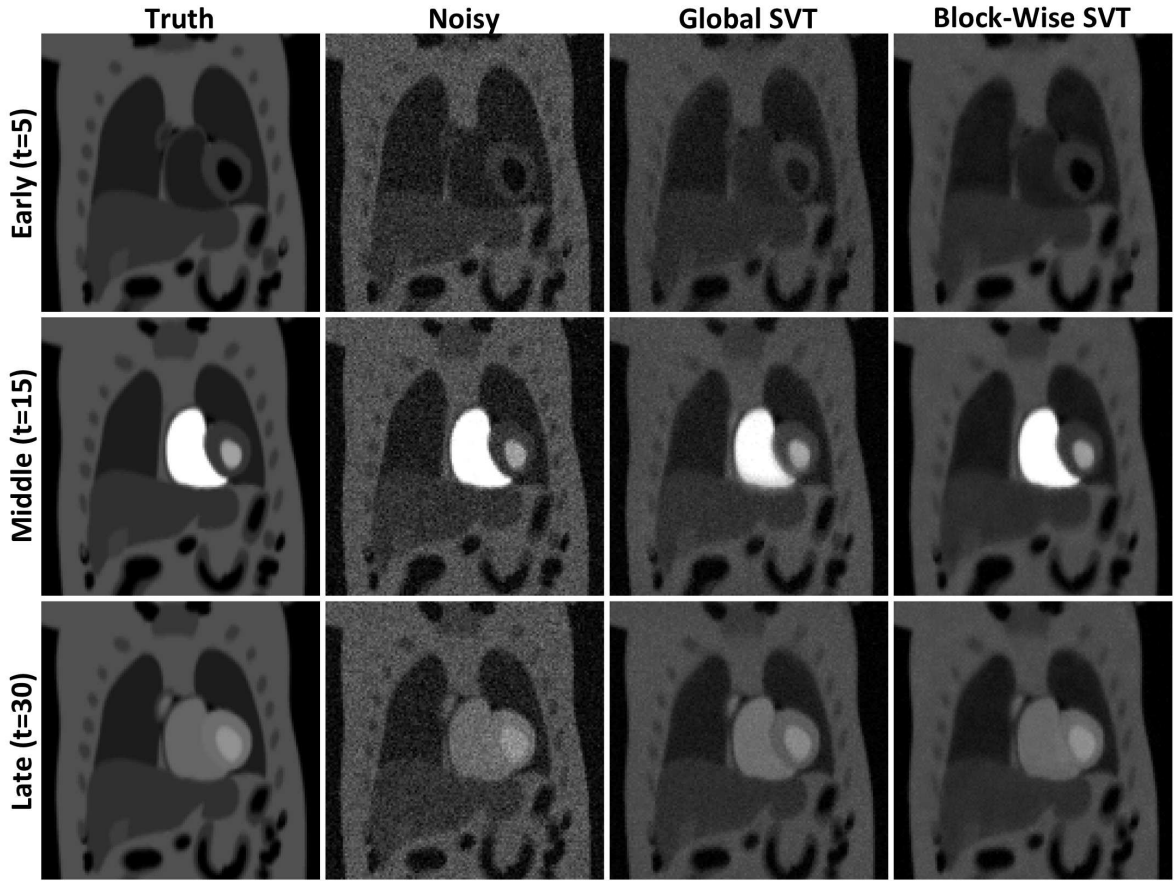}
\caption{Comparison of MSE-optimized global and block-wise SVT for denoising the complex-valued numerical PINCAT phantom. All images are identically windowed and leveled.
}
\label{figure:pincat_series}
\end{figure}

Initial evaluation of SURE for SVT was performed on the
physiologically-improved NCAT (PINCAT) numerical
phantom~\cite{Shariff2007}, which simulates a first-pass myocardial
perfusion real-time MRI series. In particularly, the free-breathing
model ($n=128$, $t=50$) available in the kt-SLR software
package~\cite{Lingala2011} was adapted to include a spatially-smooth
and temporally-varying phase field such that the target signal was
complex valued. Complex \iid\, Gaussian noise ($\tau$ = 30) was then
added to the image data. Both standard, or global, and block-wise SVT
were each executed at 101 values equispaced over
$[10^{-1},10^{7}]$. Block-wise SVT was performed with $k=7$ and
$\Omega$ comprising one block for each image pixel, under periodic
boundary conditions, such that $|\Omega|=n^{2}$.

\begin{figure}[t!]
\centering
\includegraphics[width=0.95\textwidth]{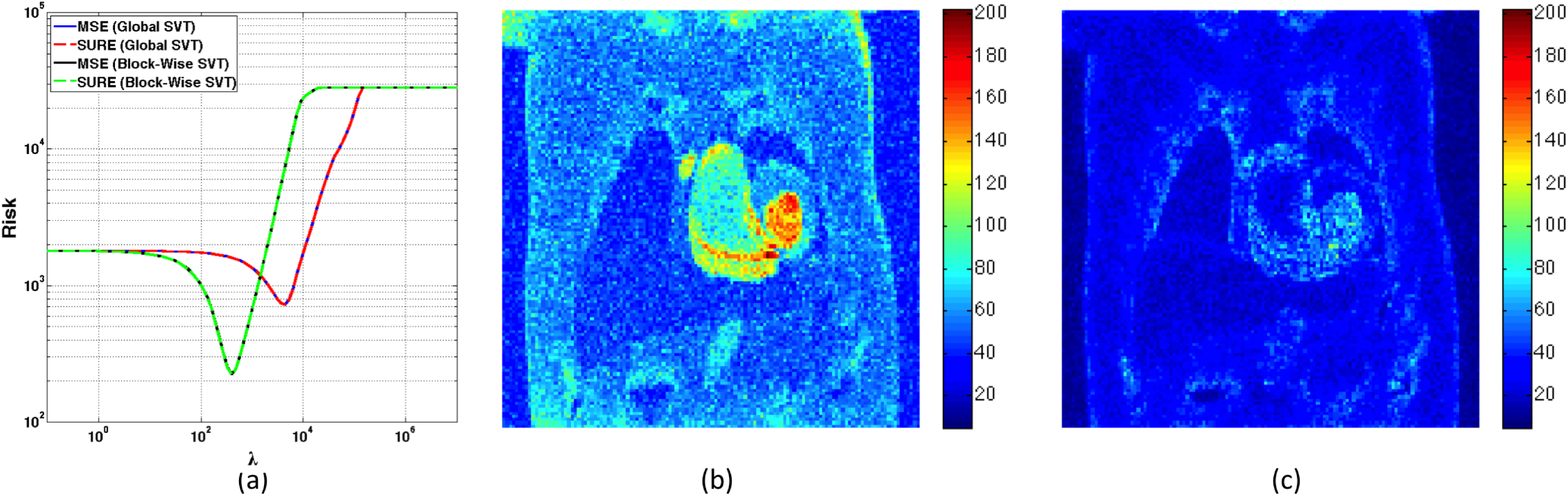}
\caption{(a) Plots of MSE and SURE for global and block-wise SVT as a
  function of threshold value, $\lambda$; the minimizing parameters
  were used to generate the images in Fig.~\ref{figure:pincat_series}.
  (b) and (c) show the worst-case error through time of the
  MSE/SURE-optimized global and block-wise SVT results, respectively.
}
\label{figure:pincat_plots}
\end{figure}

Figure~\ref{figure:pincat_series} shows early, middle, and late time
frames from the PINCAT series for the noise-free (truth), noisy, and
SVT-denoising results. The threshold values used to generate the SVT
results were selected as the MSE/SURE-minimizers in Figure~\ref{figure:pincat_plots}a. Also observe in Figure~\ref{figure:pincat_plots}a that SURE provides a precise estimate of
MSE for both the global and block-wise SVT models. The high accuracy
exhibited in this case can be attributed to the high dimensionality of
the MRI series data. Note that both global and block-wise SVT yield
strong noise reduction generally preserve both morphology and
contrast. However, block-wise SVT simultaneously demonstrates a
greater degree of noise removal and fidelity to ground truth. In
particular, note the relative contrast of the various components of
the heart in late frame results. The first observation is
corroborated by Figure~\ref{figure:pincat_plots}a, which shows that
block-wise SVT is able to achieve a lower MSE than global SVT. The
second observation is corroborated by Figures~\ref{figure:pincat_plots}b-c, which show the worst-case absolute error
(compared to ground truth) through time for the two SVT setups. Clearly, global SVT exhibits higher residual error than block-wise
SVT, particularly in areas of high motion near the myocardium. The
difference between these results can be attributed to the matrix
anisotropy problem discussed earlier in this section. Thus, SURE can
also be used to automatically-determine the block-size setting as well
as the threshold value.

\subsubsection*{Example 2: Cine Cardiac Imaging}

\begin{figure}[t!]
\centering
\includegraphics[width=0.95\textwidth]{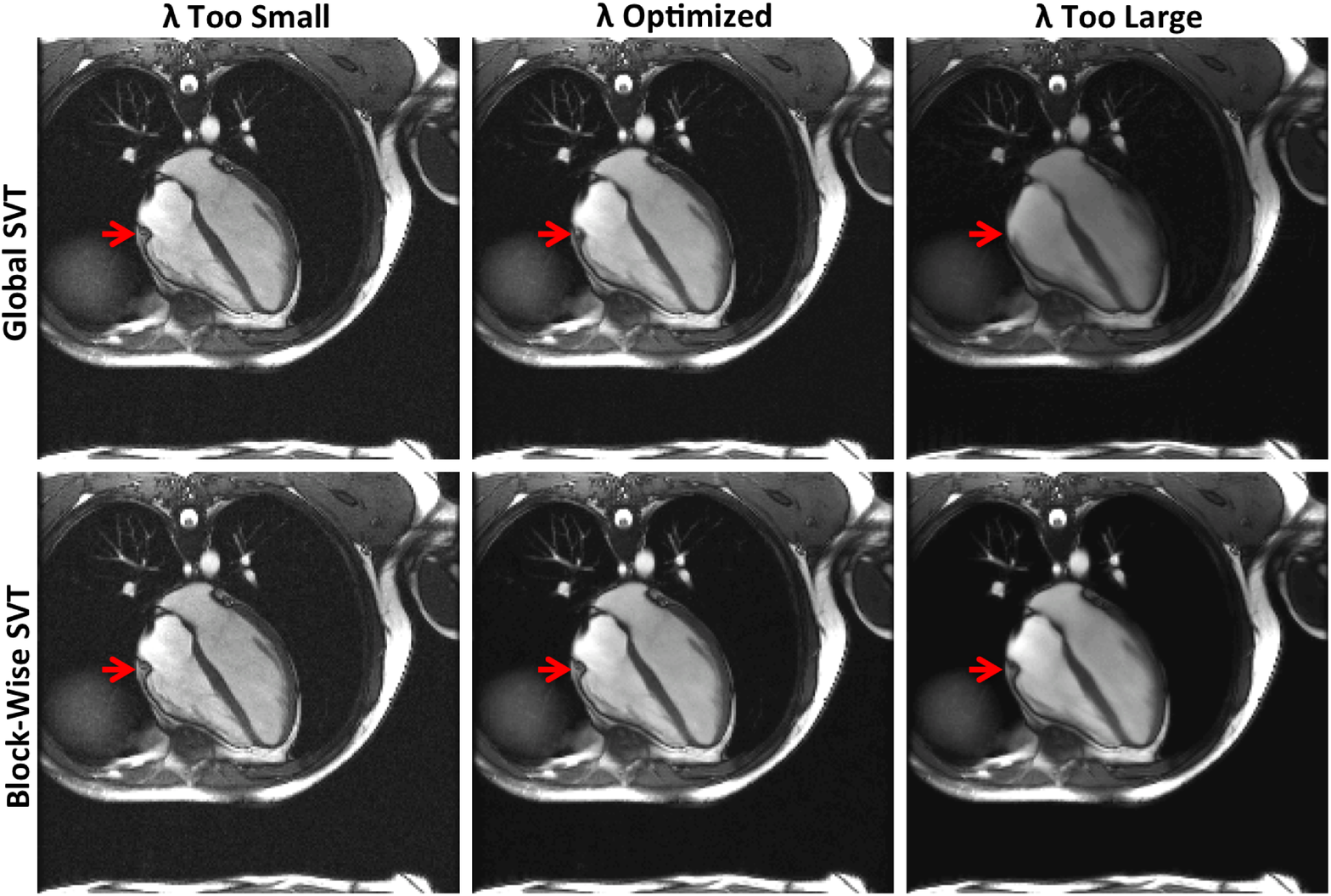}
\caption{Global and block-wise SVT denoising results for a bSSFP
  long-axis cine cardiac MRI series. The effect of selecting too high
  or low a threshold value, as compared to the MSE/SURE optimal value
  is demonstrated. The red arrow identifies the tricuspid valve of
  the heart, which exhibit a large degree of motion through the
  sequence. All images are identically windowed and leveled.}
\label{figure:cine_series}
\end{figure}

In the second experiment, a bSSFP long-axis cine sequence ($n=192$,
$t=19$) acquired at 1.5 T using an phased-array cardiac receiver coil
($\l=8$ channels) was denoised via SVT. The Casorati matrices for
individual data channels, which have undergone pre-whitening, are stacked to form a single $ln^2\times{t}$
matrix. Assuming that the spatial sensitivity of the receiver
channels does not vary substantially through time, the rank of this
composite matrix will be equal to that for any individual channel. For visualization purposes, multi-channel
denoising results were compressed by calculating the
root-sum-of-squares image across the channel dimension. Background
noise was determined using a manually-drawn region-of-interest (ROI)
to have $\tau$ = 0.67.  In this example, block-wise SVT
was performed with $k=5$ and used the same block set as in Example 1.
The parameter sweep was executed for 101 values equispaced over $[10^{-3},10^{5}]$.
In this example, the ground truth is unknown, and thus the MSE cannot be computed
to be compared against SURE. However, inspection of Figure~\ref{figure:cine_plots}a reveals that the same qualitative behavior
seen for SURE in the numerical phantom example is observed here as
well.

\begin{figure}[t!]
\centering
\includegraphics[width=0.95\textwidth]{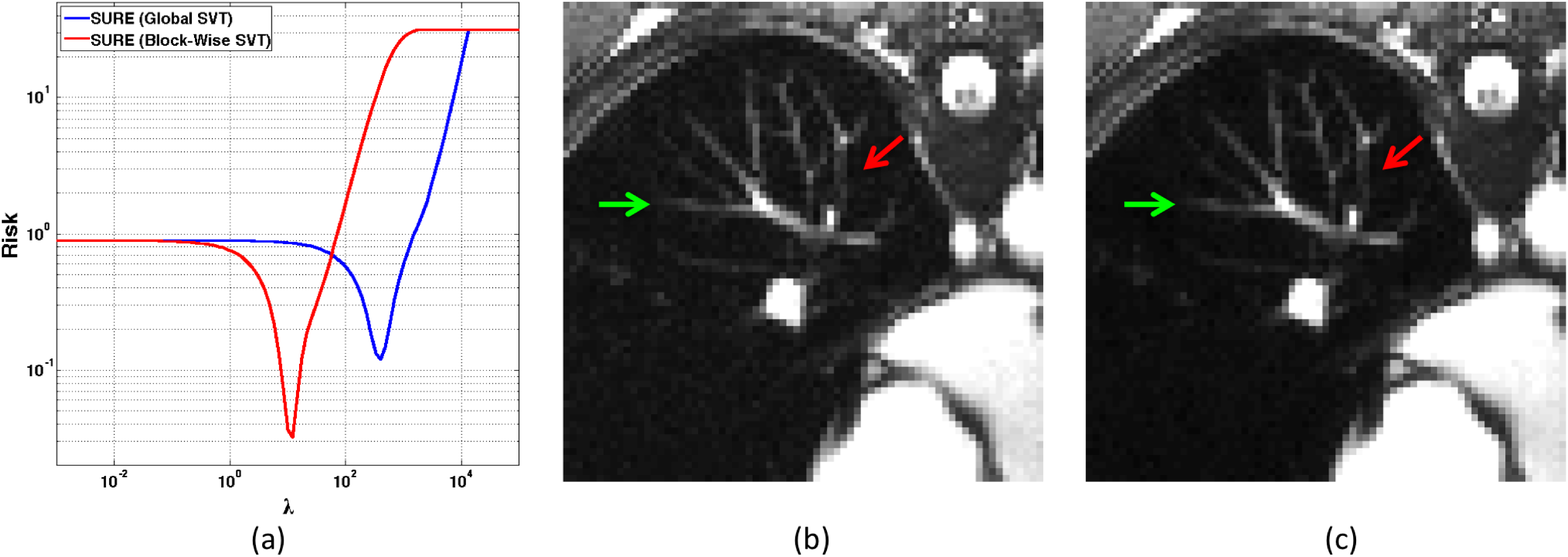}
\caption{(a) SURE for global and block-wise SVT as a function of
  threshold value, $\lambda$; the minimizing parameters were used to
  generate the images in Fig.~\ref{figure:cine_series}. (b) and (c)
  show enlargements of the pulmonary branching vessels for the
  SURE-optimized global and block-wise SVT results in
  Fig.~\ref{figure:cine_series}, respectively. Red and green arrows
  highlight improved vessel conspicuity. Both images are windowed and
  leveled identically.}
\label{figure:cine_plots}
\end{figure}

Figure \ref{figure:cine_series} demonstrates the effect of
sub-optimally selecting the threshold value for both global and
block-wise SVT.  In particular, over- and under-estimates, along with
the SURE-optimal values are applied within SVT.  For both SVT
strategies, under-estimation of $\lambda$ fails to remove noise as
expected.  Conversely, over-estimation of $\lambda$ leads to
spatio-temporal blurring by both strategies albeit in different
manners.  In global SVT, temporal blurring occurs near areas of high
motion like the tricuspid valve of the heart (indicated via the red
arrow).  However, less active areas like the pulmonary branching
vessels, seen just above the heart, are undistorted.  Some background
noise also remains visible.  In the block-wise SVT result, these
characteristics are essentially reversed---minimal temporal blurring
is induced but there is noticeable spatial blurring and loss of
low-contrast static features.  Noise is, however, strongly reduced.  A
compromise between these extremes is made by the SURE-optimized
results.  As suggested by Figure \ref{figure:cine_plots}a, block-wise
SVT offers stronger noise suppression (see Figures
\ref{figure:cine_plots}b-c) without inducing spatial or temporal blur,
the latter which is seen even in the optimized global SVT result.
This example highlights the sensitivity of SVT-denoising performance
on parameter selection, and the importance of having a disciplined
framework for choosing threshold values.  Also of note is that, even
after empirical pre-whitening, multi-channel MRI noise may not be {\em
  exactly} iid Gaussian.  Nonetheless, SURE allows production of
extremely reliable results.

\subsubsection*{Example 3: First-Pass Myocardial Perfusion}

\begin{figure}[t!]
\centering
\includegraphics[width=0.95\textwidth]{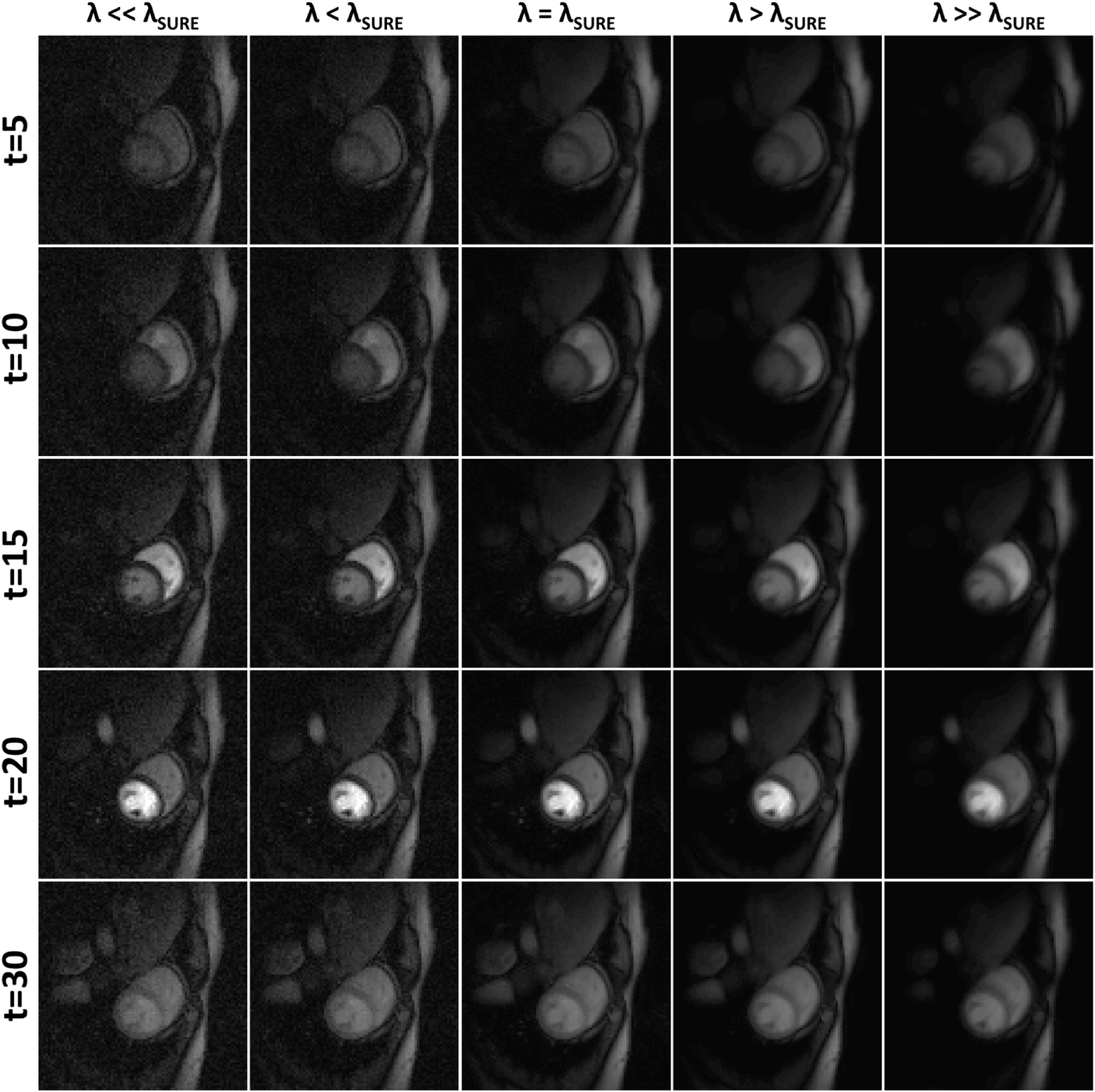}
\caption{Block-wise SVT denoising results for GRE short-axis first-pass myocardial
perfusion sequence. The effect of selecting too high
 or low a threshold value, as compared to the MSE/SURE optimal value,
 is demonstrated. All images are identically windowed and leveled.
}
\label{figure:FPMP_series}
\end{figure}

The third denoising experiment was performed on the single-channel, first-pass
myocardial perfusion real-time MRI sequence ($n_{x}=190$, $n_{y}=90$, $t=70$)
that is also provided with the kt-SLR software package~\cite{Lingala2011}. To simulate
a low-field acquisition (1.0 T), such as with an open or interventional MRI system, complex
Gaussian noise was added to the data originally acquired at 3.0 T using a GRE sequence.
Following this addition, the noise level was estimated at $\tau = 0.105$. Since the
duration of this exam exceeded feasible breath-hold time, there is substantial
respiratory motion at the end of the series. For this example, only block-wise SVT ($k=6$)
denoising was performed, and executed at 101 threshold values equispaced over $[10^{-3},10^{5}]$.
As in Example 2, $\Omega$ comprises one block for each image pixel, and only SURE can be
computed due to the absence of ground truth.

\begin{figure}[t!]
\centering
\includegraphics[width=0.45\textwidth]{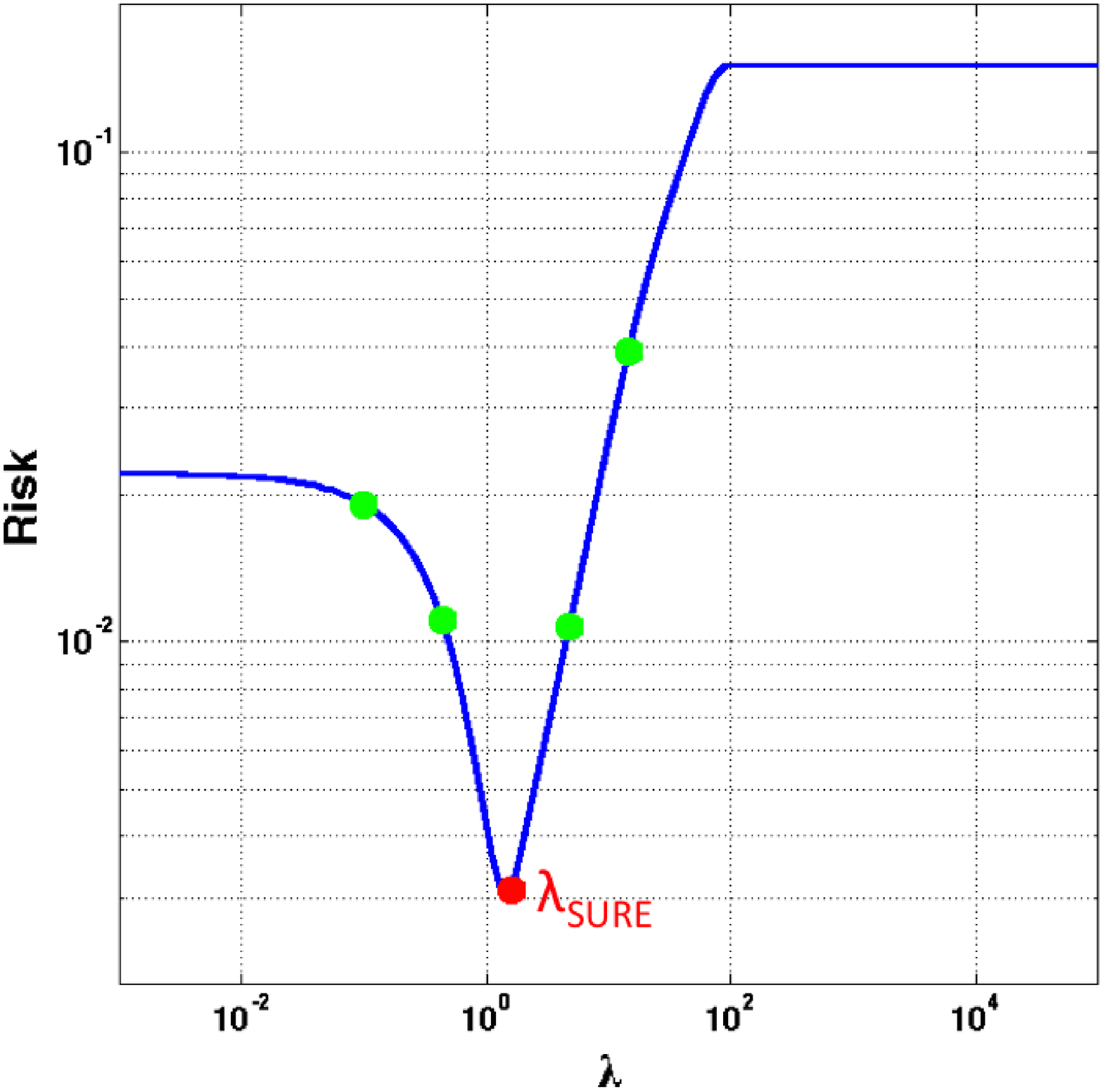}
\caption{ SURE for block-wise SVT as a function of threshold value,
  $\lambda$; the red dot indicates the optimal threshold value used in
  Fig.~\ref{figure:FPMP_series}, and green dots correspond to the
  employed under- and over-estimated values.  }
\label{figure:FPMP_plots}
\end{figure}

Mirroring Figure~\ref{figure:cine_series},
Figure~\ref{figure:FPMP_series} demonstrates the effect of threshold
selection on different image frames from the perfusion series. In
particular, images showing the transition of contrast from the right
to left vertical, as well as later-stage onset of myocardial blush are
shown. Of the 101 tested threshold values, 5 threshold settings
corresponding to 2 over-estimation, 2 under-estimation, and the
SURE-optimal value are depicted (see
Figure~\ref{figure:FPMP_plots}). Under all display settings, temporal
contrast dynamics are well preserved; however, threshold setting has a
marked effect on noise level, spatial resolution, and contrast. As
before, employing SVT with an under-estimated threshold fails to
remove noise, as evident in the first two columns of
Figure~\ref{figure:FPMP_series}.  Conversely, employing SVT with an
over-estimated threshold will remove noise but also induce both
spatial blurring and contrast loss. This is particularly evident in
the 4th image row ($t=20$).  At high threshold values, the contrast of
pulmonary vasculature is diminished, and there is visible blurring of
the papillary muscles, which appear as dark spots in the
contrast-enhanced left ventricle.  Finally, the SVT result obtained
with the SURE-optimal threshold represents an ideal balance between
these extremes, offering strong noise reduction without degradation of
important anatomical features.  As suggested by
Figure~\ref{figure:FPMP_plots}, there may only exist a narrow
parameter window in which effective denoising can be achieved, and the
presented unbiased risk estimators can greatly aid in the
identification of these optimal settings.

\subsection{Extensions and generalizations}

The three examples in the previous subsection demonstrate the utility
of the unbiased risk estimation for SVT-based denoising of cardiac MRI
series data. Of course, this methodology can also be applied to any
other dynamic or parametric MRI series, including those discussed
earlier such as functional MRI. Although exhaustive search over a
wide range of threshold values was performed for the sake of
exposition, the observed unimodality of the risk functional suggests
that practical automated parameter selection for SVT denoising could
be accelerated using a bisection strategy such as golden section
search.

In addition to optimizing over a range of threshold values, unbiased
risk estimation can also be used to identify the optimal block-size
when using the local SVT model in~\eqref{eq:llr_prox}. For example,
one could imagine performing a sweep of $\lambda$ over a prescribed
range of values for a collection of block-size values, and computing
the lower envelope of this set of risk measures to investigate best
achievable performance as a function of block-size. In addition to
simply optimizing a denoising setup, this type of information can be
used to guide the development of rank-based reconstruction frameworks
for undersampled dynamic and/or parallel MRI.

\section{SURE Formulas for Singular Value Thresholding}
\label{section:sure_for_svt}

In \cite{Stein81}, Stein gave a formula for an unbiased estimate of
the mean-squared error of an estimator obeying a weak
differentiability assumption and mild integrability conditions. Before
reviewing this, recall that a function $g:\R^p \mapsto \R$ is said to
be weakly differentiable with respect to the variable $x_i$ if there
exists $h:\R^p \mapsto \R$ such that for all compactly supported and
infinitely differentiable functions $\varphi$,
\begin{equation*}
    \int \varphi(x) h(x)\, dx = -\int\frac{\partial\varphi(x)}{\partial x_{i}} g(x)\, dx. % \qquad\forall\, \varphi\in C^{\infty}_c(\R^{p}),
\end{equation*}
Roughly speaking, the derivatives can fail to exist over regions of
Lebesgue measure zero.
\begin{proposition}[\cite{Stein81,JohnstoneGE}]
\label{proposition:sure}
Suppose that $Y_{ij}\stackrel{\iid}{\sim} \normaldist(X_{ij},
1)$. Consider an estimator $\hat{\vc{X}}$ of the form $\hat{\vc{X}} =
\vc{Y} + g(\vc{Y})$, where $g_{ij}:\R^{m\times n}\mapsto\R$ is weakly
differentiable with respect to $Y_{ij}$ and
%\ejc{with respect to which variable, see above?} and
    \begin{equation*}
      \ev\left\{\left|Y_{ij}g_{ij}(\vc{Y})\right| + \left|\frac{\partial}{\partial Y_{ij}}g_{ij}(\vc{Y})\right|\right\} < \infty,
    \end{equation*}
    for $(i,j)\in \idx := \{1, \ldots, m\} \times \{1, \ldots
    n\}$. Then
    \begin{equation}\label{eq:proposition:sure}
      \ev\|\hat{\vc{X}} - \vc{X}\|_F^2 = \ev\bigl\{mn + 2\div\, (g(\vc{Y})) + \|g(\vc{Y})\|_F^2\bigr\}.
    \end{equation}
\end{proposition}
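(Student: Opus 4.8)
The plan is to expand the squared Frobenius error, evaluate the two easy terms directly, and reduce the cross term to a coordinatewise application of the one‑dimensional Stein identity. Writing $\hat{\vc X} - \vc X = (\vc Y - \vc X) + g(\vc Y)$ and expanding,
\begin{equation*}
\|\hat{\vc X} - \vc X\|_F^2 = \|\vc Y - \vc X\|_F^2 + 2\iprod{\vc Y - \vc X}{g(\vc Y)} + \|g(\vc Y)\|_F^2 .
\end{equation*}
Taking expectations, the first term contributes $\ev\|\vc Y - \vc X\|_F^2 = mn$ since the $Y_{ij}-X_{ij}$ are $mn$ independent standard Gaussians, and the last term is already in the stated form. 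So the whole problem reduces to showing
\begin{equation*}
\ev\,\iprod{\vc Y - \vc X}{g(\vc Y)} \;=\; \sum_{(i,j)\in\idx}\ev\Bigl[(Y_{ij}-X_{ij})\,g_{ij}(\vc Y)\Bigr] \;=\; \ev\{\div\,(g(\vc Y))\},
\end{equation*}
with all expectations finite by the integrability hypothesis.

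Next I would treat one coordinate $(i,j)$ at a time by conditioning on the remaining entries $\vc Y_{-(i,j)} := (Y_{kl})_{(k,l)\neq(i,j)}$. Because the noise entries are independent, conditionally on $\vc Y_{-(i,j)}$ the variable $Y_{ij}$ is $\normaldist(X_{ij},1)$ and $g_{ij}$ becomes a function of the single real variable $Y_{ij}$; by a Fubini‑type statement for weakly differentiable functions, this section is weakly differentiable for almost every value of $\vc Y_{-(i,j)}$, with weak derivative the corresponding section of $\partial g_{ij}/\partial Y_{ij}$. Invoking the one‑dimensional Stein identity in the form ``$Z\sim\normaldist(\mu,1)$ and $h$ weakly differentiable with $\ev\{|Zh(Z)|+|h'(Z)|\}<\infty$ imply $\ev[(Z-\mu)h(Z)]=\ev[h'(Z)]$'' conditionally on $\vc Y_{-(i,j)}$, and then taking expectation over $\vc Y_{-(i,j)}$ (legitimate by Tonelli and the finiteness assumption), gives $\ev[(Y_{ij}-X_{ij})g_{ij}(\vc Y)] = \ev[\partial g_{ij}/\partial Y_{ij}(\vc Y)]$. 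Summing over $(i,j)\in\idx$ yields the claim and hence \eqref{eq:proposition:sure}.

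It remains to establish the one‑dimensional identity, which is where the real work is. Writing $\phi$ for the standard normal density and using $\phi'(t)=-t\,\phi(t)$, we have $\ev[(Z-\mu)h(Z)]=\int (z-\mu)\,\phi(z-\mu)\,h(z)\,dz = -\int \phi'(z-\mu)\,h(z)\,dz$, and we want to integrate by parts to move the derivative onto $h$. Since $\phi(\cdot-\mu)$ is not compactly supported, the definition of the weak derivative cannot be applied verbatim; the technical heart of the argument is to insert a smooth cutoff $\chi_R$ supported on $[-R,R]$, apply the weak‑derivative identity to the genuinely compactly supported test function $\chi_R(z)\phi(z-\mu)$ (picking up an extra $\chi_R'$ term by the product rule), and let $R\to\infty$. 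The cutoff term is controlled by $\int_{R\le|z|\le R+1}\phi(z-\mu)|h(z)|\,dz \le R^{-1}\int_{|z|\ge R}|z|\phi(z-\mu)|h(z)|\,dz\to 0$, using $\ev|Zh(Z)|<\infty$, while the main terms converge by dominated convergence using $\ev|h'(Z)|<\infty$ and the integrability of $|z-\mu|\phi(z-\mu)|h(z)|$ (together with local integrability of $h$). This limiting argument—plus the Fubini statement for weakly differentiable functions invoked above—is the only genuine obstacle; everything else is bookkeeping. Since the resulting identity is exactly Stein's lemma, I would either carry out this cutoff computation once in detail or simply cite \cite{Stein81,JohnstoneGE}.
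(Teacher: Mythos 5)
The paper offers no proof of this proposition: it is stated as a known result and attributed directly to \cite{Stein81,JohnstoneGE}, so there is nothing in the text to compare your argument against line by line. Your proposal is the standard derivation and it is correct: the expansion of the squared error, the reduction of the cross term to a coordinatewise Stein identity by conditioning on $\vc Y_{-(i,j)}$, and the cutoff integration by parts for the one-dimensional identity are all sound, and your tail bound $\int_{R\le|z|\le R+1}\phi(z-\mu)|h(z)|\,dz\le R^{-1}\int_{|z|\ge R}|z|\phi(z-\mu)|h(z)|\,dz$ correctly exploits the hypothesis $\ev|Zh(Z)|<\infty$. The only step deserving a citation rather than a wave is the one you already flag: passing from distributional weak differentiability in $Y_{ij}$ to absolute continuity of almost every one-dimensional section with the section of the weak derivative as classical derivative (the ACL characterization, e.g.\ in \cite{EvansGariepyMT}); with that reference in place the argument is complete.
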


The rest of this section establishes that $\svt$ obeys these
assumptions whereas the deduction of a closed-form expression for its
divergence is deferred to Section~\ref{section:general_theory}. To
achieve this, we record a simple fact that will allow us to study both
the real- and complex-valued cases within the same framework. We
identify below $\R^{m\times n}$ with $\R^{mn}$.
\begin{lemma}\label{lemma:lipschitz_implies_weak_differentiability}
  Let $g :\R^{m\times n}\mapsto\R^{m\times n}$ with components
  $g_{ij}:\R^{m\times n}\mapsto \R$.  Assume $g(\vc{0}) = \vc{0}$. If
  $g$ is Lipschitz with constant $L > 0$, i.e.,
    \begin{equation*}
      \forall\, \vc{X},\vc{Y}:\quad \|g(\vc{X}) - g(\vc{Y})\| \leq L
\|\vc{X} - \vc{Y}\|,
    \end{equation*}
    for some norm in $\R^{m\times n}$, then each $g_{ij}$ is weakly
    differentiable Lebesgue-a.e.~in $\R^{m\times n}$.  Furthermore, if
    $\vc{Y}\in\R^{m\times n}$ is distributed as in
    \eqref{eq:mvn_mean_estimation}, then for all pairs $(i,j)$,
    \begin{equation*}
      \ev\left\{\left|Y_{ij}g_{ij}(\vc{Y})\right|\right\} < \infty \quad \text{ and } \quad
      \ev\left\{\left|\frac{\partial}{\partial Y_{ij}}g_{ij}(\vc{Y})\right|\right\} < \infty.
    \end{equation*}
\end{lemma}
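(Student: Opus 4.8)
The plan is to reduce the statement to two classical facts about Lipschitz functions --- Rademacher's theorem and the absolute-continuity-on-lines (ACL) characterization of Sobolev functions --- together with the fact that Gaussians have moments of all orders. Since all norms on the finite-dimensional space $\R^{m\times n}\cong\R^{mn}$ are equivalent, I would first pass to the Frobenius norm: there is a constant $c=c(m,n)$ so that the Lipschitz hypothesis implies $\norm{g(\vc{X})-g(\vc{Y})}_F\le cL\,\norm{\vc{X}-\vc{Y}}_F$, and hence each scalar component $g_{ij}:\R^{mn}\to\R$ is Lipschitz with constant $cL$ (because $|g_{ij}(\vc{X})-g_{ij}(\vc{Y})|\le\norm{g(\vc{X})-g(\vc{Y})}_F$). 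The normalization $g(\vc{0})=\vc{0}$ then yields the linear growth bound $|g_{ij}(\vc{Y})|\le\norm{g(\vc{Y})}_F\le cL\,\norm{\vc{Y}}_F$, which I will use for the first integrability claim.

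\emph{Weak differentiability.} Fix a component $g=g_{ij}$. For each coordinate direction $\vc{e}_k$, the restriction of $g$ to any line parallel to $\vc{e}_k$ is Lipschitz on $\R$, hence absolutely continuous, hence differentiable a.e.\ on that line with a.e.\ derivative bounded in modulus by $cL$; denote this derivative $h_k$ where it exists and set $h_k=0$ elsewhere. By Fubini, the bad set has $mn$-dimensional Lebesgue measure zero, so $h_k$ is a bounded measurable (hence locally integrable) function on $\R^{mn}$. For any $\varphi\in C_c^\infty(\R^{mn})$, I would integrate by parts in the $k$-th variable along each line --- legitimate since both $g$ and $\varphi$ are absolutely continuous there and $\varphi$ is compactly supported --- and then integrate the resulting identity over the remaining $mn-1$ variables using Fubini, obtaining $\int \varphi\, h_k = -\int (\partial\varphi/\partial x_k)\, g$. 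This shows $h_k=\partial g/\partial x_k$ in the weak sense; in particular each $g_{ij}$ is weakly differentiable with respect to every $Y_{k\ell}$, and $|\partial g_{ij}/\partial Y_{ij}|\le cL$ Lebesgue-a.e. (Alternatively, Rademacher's theorem provides the classical gradient a.e., and the same ACL/Fubini argument identifies it with the weak gradient.)

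\emph{Integrability.} For the first bound, the linear growth estimate above and Cauchy--Schwarz give
\[
\E\bigl|Y_{ij}\,g_{ij}(\vc{Y})\bigr|
\;\le\; cL\,\E\bigl[\,|Y_{ij}|\,\norm{\vc{Y}}_F\,\bigr]
\;\le\; cL\,\bigl(\E\, Y_{ij}^2\bigr)^{1/2}\bigl(\E\,\norm{\vc{Y}}_F^2\bigr)^{1/2},
\]
and under \eqref{eq:mvn_mean_estimation} we have $\E\, Y_{ij}^2=(X^0_{ij})^2+\ourstd^2<\infty$ and $\E\,\norm{\vc{Y}}_F^2=\norm{\true}_F^2+mn\ourstd^2<\infty$, so the left-hand side is finite. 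The second bound is immediate from the a.e.\ estimate $|\partial g_{ij}/\partial Y_{ij}|\le cL$: its expectation is at most $cL<\infty$, irrespective of the law of $\vc{Y}$.

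\emph{Main obstacle.} The only genuinely delicate point is the identification, in the weak-differentiability step, of the a.e.-defined classical partial derivative with the distributional one --- i.e.\ justifying the integration-by-parts identity for a merely Lipschitz (not $C^1$) function; everything else is bookkeeping with norm-equivalence constants and Gaussian moments. The ACL argument sketched above is the cleanest route; an equivalent alternative is to mollify, $g_\eps=g*\rho_\eps$, apply the smooth integration-by-parts formula, and pass to the limit, using that $g_\eps\to g$ locally uniformly while $\nabla g_\eps$ is uniformly bounded by $cL$, so a subsequence of $\nabla g_\eps$ converges weak-$*$ in $L^\infty_{\mathrm{loc}}$ to a limit that must coincide with the weak gradient.
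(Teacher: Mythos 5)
Your proposal is correct and follows essentially the same route as the paper: reduce to the Frobenius norm by equivalence of norms, get a.e.\ (weak) differentiability of each Lipschitz component with derivative bounded by the Lipschitz constant, and then obtain the two integrability bounds from the linear growth estimate $|g_{ij}(\vc{Y})|\le cL\|\vc{Y}\|_F$, Cauchy--Schwarz, and Gaussian second moments. The only difference is that where the paper simply cites Rademacher's theorem and the standard results identifying classical and weak derivatives of Lipschitz functions (Evans--Gariepy, Section 6.2), you supply the underlying ACL/Fubini integration-by-parts argument yourself, which is the content of those cited theorems.
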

\begin{proof}
  Since all norms are equivalent in finite dimensions, $g$ is also
  Lipschitz with respect to the Frobenius norm. Hence,
    \begin{equation}\label{eq:g_lipschitz_frobenius}
      \forall\, \vc{X}, \vc{Y}:\quad \|g(\vc{X}) - g(\vc{Y})\|_F \leq L
      \|\vc{X} - \vc{Y}\|_F \quad \Longrightarrow \quad |g_{ij}(\vc{X}) - g_{ij}(\vc{Y})| \leq L
      \|\vc{X} - \vc{Y}\|_F,
    \end{equation}
    for some constant $L$.
% This implies that every component of $g$ is
%     Lipschitz; that is,
% \begin{equation}
% \label{eq:useful}
% |g_{ij}(\vc{X}) - g_{ij}(\vc{Y})| \leq L
%     \|\vc{X} - \vc{Y}\|_F.
% \end{equation}
By Rademacher's theorem (see Section 3.1.2 in~\cite{EvansGariepyMT})
each component is differentiable Lebesgue-a.e.~in $\R^{m\times
  n}$. Furthermore, these components are weakly differentiable (see
Theorem 1 and 2, Section 6.2, in~\cite{EvansGariepyMT}), and both the
derivatives and weak derivatives are Lebesgue-a.e.~equal.

Regarding the integrability, since $g(\vc{0}) = \vc{0}$,
\eqref{eq:g_lipschitz_frobenius} yields $\|g(\vc{Y})\|_F^2 \leq
L^2\|\vc{Y}\|_F^2$ and we deduce
    \begin{equation*}
      \ev\|g(\vc{Y})\|_F^2 \leq L^2\ev\|\vc{Y}\|_F^2 = L^2(mn + \|\vc{X}_{0}\|_F^2) < \infty.
    \end{equation*}
    Furthermore, the Cauchy-Schwarz inequality gives
    \begin{equation*}
      \ev\{|Y_{ij} g_{ij}(\vc{Y})|\} \leq \ev\{Y_{ij}^2\}^{1/2}\ev\{g_{ij}(\vc{Y})^2\}^{1/2} \leq L\left[(1 + X_{0, ij}^2)(mn + \|\vc{X}_{0}\|_F^2)\right]^{1/2} < \infty.
    \end{equation*}
    Finally, \eqref{eq:g_lipschitz_frobenius} asserts that the
    derivatives of $g_{ij}$---whenever they exist---are bounded by
    $L$. Hence,
   %  since the Lipschitz constant for each component of $g$ is also
%     $L$, the Frobenius norm of their derivatives are bounded by this
%     constant whenever they exist. Consequently,
    \begin{equation*}
    	\ev\left\{\left|\frac{\partial}{\partial Y_{ij}}g_{ij}(\vc{Y})\right|\right\} \leq \ev\left\{\left|\frac{\partial}{\partial Y_{ij}}g_{ij}(\vc{Y})\right|^2\right\}^{1/2}  \leq L,
    \end{equation*}
    which concludes the proof.
\end{proof}

For pedagogical reasons, we work with singular value thresholding
first, before discussing more general spectral estimators in Section~\ref{section:general_theory}.  Writing $g(\vc{Y}) =
\svt_\lambda(\vc{Y}) - \vc{Y}$ then, the conditions become
\begin{equation*}
  \ev\left\{\left|Y_{ij}g_{ij}(\vc{Y})\right| + \left|\frac{\partial g_{ij}(\vc{Y})}{\partial Y_{ij}}\right|\right\}\leq\ev\left\{\left|Y_{ij}\svt_{\lambda}(\vc{Y})_{ij}\right| +
    \left|\frac{\partial}{\partial Y_{ij}}\svt_{\lambda}(\vc{Y})_{ij}\right|\right\} + \ev\{Y_{ij}^2\} + 1,
\end{equation*}
and we only need to study the weak differentiability and integrability
of $\svt$.

\subsection{The real case}
\label{section:sure_for_svt:real}

\begin{lemma}\label{lemma:svt_meets_sure_conditions}
  The mapping $\svt_\lambda$, $\lambda > 0$, obeys the
  assumptions of Proposition~\ref{proposition:sure}.
\end{lemma}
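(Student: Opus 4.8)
The plan is to invoke Lemma~\ref{lemma:lipschitz_implies_weak_differentiability} with $g(\vc{Y}) = \svt_\lambda(\vc{Y})$, which reduces the statement to checking two things: that $\svt_\lambda$ is Lipschitz with respect to some norm on $\R^{m\times n}$, and that $\svt_\lambda(\vc{0}) = \vc{0}$. The second point is immediate since the zero matrix has all singular values equal to zero, so $(\sigma_i - \lambda)_+ = 0$ for every $i$. For the first point, the natural route is to use the characterization of $\svt_\lambda$ as the proximal operator of the nuclear norm, recorded in~\eqref{eq:svt_as_prox}: the prox operator of any proper lower semi-continuous convex function is firmly nonexpansive, hence $1$-Lipschitz, with respect to the Frobenius norm. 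Thus $\|\svt_\lambda(\vc{X}) - \svt_\lambda(\vc{Y})\|_F \leq \|\vc{X} - \vc{Y}\|_F$ for all $\vc{X}, \vc{Y}$, which is exactly the Lipschitz hypothesis of Lemma~\ref{lemma:lipschitz_implies_weak_differentiability} with $L = 1$.

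With these two facts in hand, Lemma~\ref{lemma:lipschitz_implies_weak_differentiability} gives directly that each component $\svt_\lambda(\vc{Y})_{ij}$ is weakly differentiable Lebesgue-almost-everywhere, and that both $\ev\{|Y_{ij}\,\svt_\lambda(\vc{Y})_{ij}|\} < \infty$ and $\ev\{|\partial \svt_\lambda(\vc{Y})_{ij}/\partial Y_{ij}|\} < \infty$ when $\vc{Y}$ follows model~\eqref{eq:mvn_mean_estimation}. Combining this with the displayed bound immediately preceding this lemma, which controls the integrability of $g(\vc{Y}) = \svt_\lambda(\vc{Y}) - \vc{Y}$ in terms of that of $\svt_\lambda(\vc{Y})$ together with $\ev\{Y_{ij}^2\} < \infty$, we obtain that $g$ satisfies the hypotheses of Proposition~\ref{proposition:sure}. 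This is exactly the assertion of the lemma.

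I do not expect a serious obstacle here; the only point requiring a little care is the appeal to firm nonexpansiveness of the proximal map — one should make sure the nuclear norm is indeed proper, convex, and lower semi-continuous on $\R^{m\times n}$ (it is, being a norm), so that the standard prox theory applies. An alternative, if one prefers to avoid citing prox theory, is to prove the $1$-Lipschitz bound directly from the variational characterization~\eqref{eq:svt_as_prox}: writing $\vc{A} = \svt_\lambda(\vc{X})$, $\vc{B} = \svt_\lambda(\vc{Y})$, the optimality conditions give $\vc{X} - \vc{A} \in \lambda\,\partial\|\cdot\|_*(\vc{A})$ and $\vc{Y} - \vc{B} \in \lambda\,\partial\|\cdot\|_*(\vc{B})$, and monotonicity of the subdifferential yields $\iprod{(\vc{X}-\vc{A}) - (\vc{Y}-\vc{B})}{\vc{A}-\vc{B}} \geq 0$, from which $\|\vc{A}-\vc{B}\|_F^2 \leq \iprod{\vc{X}-\vc{Y}}{\vc{A}-\vc{B}} \leq \|\vc{X}-\vc{Y}\|_F\,\|\vc{A}-\vc{B}\|_F$ and hence $\|\vc{A}-\vc{B}\|_F \leq \|\vc{X}-\vc{Y}\|_F$. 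Either way the lemma follows.
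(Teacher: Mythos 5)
Your proposal follows essentially the same route as the paper: identify $\svt_\lambda$ as the proximal operator of the nuclear norm, invoke (firm) nonexpansiveness of proximal maps of proper lower semi-continuous convex functions to get the Lipschitz bound, and then apply Lemma~\ref{lemma:lipschitz_implies_weak_differentiability}. The extra details you supply --- checking $\svt_\lambda(\vc{0})=\vc{0}$ and the optional direct monotonicity argument for the $1$-Lipschitz bound --- are correct and consistent with the paper's (terser) proof.
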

\begin{proof}
  By definition, $\svt_\lambda$ is the proximity function of the
  nuclear norm. Since the nuclear norm is a convex, proper and lower
  semi-continuous function, $\svt_\lambda$ is Lipschitz and
  non-expansive (see Chapter 6, Section 30
  in~\cite{RockafellarCA}). Applying
  Lemma~\ref{lemma:lipschitz_implies_weak_differentiability} proves
  the claim.
\end{proof}

%It remains to derive the closed-form expression for the divergence of
%$\svt$. \ejc{I thought this was deferred to Section 4, see above.}

The mapping $\svt_\lambda$ may not be differentiable
everywhere. Fortunately, since we are interested in weak
differentiability, we can discard sets of Lebesgue measure zero. For
instance, model~\eqref{eq:mvn_mean_estimation} guarantees that
$\vc{Y}$ is simple and has full-rank with probability one.
%Since we are interested in weak differentiability, we can discard sets
%of Lebesgue measure zero. For instance,
%model~(\ref{eq:mvn_mean_estimation}) guarantees that $\vc{Y}$ is
%simple and has full-rank with probability one.
%\footnote{The complement
%   $\F^c$ of $\F$ has Lebesgue measure zero, and so has $\F^c -
%   \vc{X}_0$. Since a non-singular multivariate normal distribution
%   admits a density with respect to the Lebesgue measure, the
%   probability of $\vc{W}$ being in $\F^c - \vc{X}_0$ is
%   zero. Consequently, the probability of $\vc{Y}$ being in $\F^c$ is
%   zero, and the claim follows.}
% This suggests considering
% \begin{equation*}
%     \F = \{\vc{X}\in\R^{m\times n}:\,\text{$\vc{X}$ is simple and full-rank}\},
% \end{equation*}
% whose complement has Lebesgue measure zero.
Further, since $\svt_\lambda$ acts by applying soft-thresholding
to each singular value, it seems natural that matrices with at least
one singular value equal to $\lambda$ will be a cause of
concern. Consider then
\begin{equation*}
  \F := \{\vc{X}\in\R^{m\times n}:\, \text{$\vc{X}$ is simple, has full rank and no singular value exactly equal to $\lambda$}\}.
\end{equation*}
It is clear that the complement of $\F$ has Lebesgue measure
zero. Consequently, we can restrict ourselves to the open set $\F$ to
look for a divergence. The result below---a corollary of the general
Theorem~\ref{theorem:div_simple}---establishes that $\svt$ is
differentiable over $\F$, and thus we have a divergence in the usual
sense.
\begin{corollary}\label{theorem:svt_div}
  The mapping $\svt_\lambda$ is differentiable over $\F$ and we have
    \begin{equation}\label{eq:lemma:svt_div:svt_div}
      \div\, (\svt_\lambda(\vc{X})) = \sum_{i = 1}^{\min(m, n)}\left[\ind\{\sigma_i > \lambda\} + |m-n|\left(1 - \frac{\lambda}{\sigma_i}\right)_+\right]+ 2\sum_{i\neq j,\, i, j = 1}^{\min(m,
      n)} \frac{\sigma_i (\sigma_i-\lambda)_+}{\sigma_i^2 - \sigma_j^2}.
    \end{equation}
\end{corollary}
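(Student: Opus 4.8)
The plan is to deduce Corollary~\ref{theorem:svt_div} as a direct specialization of the general differentiability result Theorem~\ref{theorem:div_simple} for spectral functions of the form \eqref{eq:spectral_function}. The weak differentiability and integrability actually needed to feed $\svt_\lambda$ into Proposition~\ref{proposition:sure} have already been secured in Lemma~\ref{lemma:svt_meets_sure_conditions}; what the corollary adds is an explicit pointwise divergence valid on a set whose complement is Lebesgue-null, and this is exactly what Theorem~\ref{theorem:div_simple} delivers once we recognize $\svt_\lambda$ as a spectral function.

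First I would observe that $\svt_\lambda$ is the spectral function associated with the single scalar map $s_\lambda(\sigma)=(\sigma-\lambda)_+$; that is, in the notation of \eqref{eq:spectral_function} we take $f_i=s_\lambda$ for every $i$. The elementary properties of $s_\lambda$ are that it is globally $1$-Lipschitz, satisfies $s_\lambda(0)=0$, is nondecreasing, and is continuously differentiable on $\R\setminus\{\lambda\}$ with $s_\lambda'(\sigma)=\ind\{\sigma>\lambda\}$; its only non-smooth point is $\sigma=\lambda$. By the very definition of $\F$, every $\vc{X}\in\F$ has distinct singular values, all strictly positive (full rank), and none equal to $\lambda$; in particular each $\sigma_i$ lies in the open region where $s_\lambda$ is smooth. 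Thus the hypotheses of Theorem~\ref{theorem:div_simple}---roughly, that $\vc{X}$ be simple and of full rank and that each $f_i$ be differentiable at $\sigma_i$---hold at every point of the open set $\F$, which already yields that $\svt_\lambda$ is differentiable on $\F$ with $\div\,(\svt_\lambda(\vc{X}))$ given by \eqref{eq:gen-div} for $f_i=s_\lambda$.

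It then remains to perform the substitution. The first group of terms contributes $\sum_i s_\lambda'(\sigma_i)=\sum_i\ind\{\sigma_i>\lambda\}$. The correction term carrying the factor $|m-n|$, evaluated at $f_i=s_\lambda$, equals $|m-n|\sum_i(\sigma_i-\lambda)_+/\sigma_i=|m-n|\sum_i(1-\lambda/\sigma_i)_+$, using $\sigma_i>0$ on $\F$. Finally the off-diagonal double sum becomes $2\sum_{i\neq j}\sigma_i(\sigma_i-\lambda)_+/(\sigma_i^2-\sigma_j^2)$ since $f_i(\sigma_i)=(\sigma_i-\lambda)_+$. Summing the three contributions reproduces exactly \eqref{eq:lemma:svt_div:svt_div}.

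I do not expect a genuine obstacle at the level of the corollary itself: all of the analytic substance---that a spectral function is differentiable at a simple, full-rank matrix whose singular values avoid the kinks of the generating $f_i$, together with the Jacobian computation behind \eqref{eq:gen-div}---is the content of Theorem~\ref{theorem:div_simple}, established in Section~\ref{section:general_theory}, and that is where the real work lies. The one point that demands care is the exclusion of matrices possessing a singular value exactly equal to $\lambda$: there $s_\lambda$ is truly non-differentiable, and so is $\svt_\lambda$, but this set---together with the non-simple and rank-deficient matrices---has Lebesgue measure zero, so restricting to the open set $\F$ is harmless and is precisely what is needed for the weak divergence appearing in the SURE formula \eqref{eq:sure_for_svt}.
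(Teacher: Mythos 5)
Your proposal is correct and follows essentially the same route as the paper: recognize $\svt_\lambda$ as the spectral function generated by $(\sigma-\lambda)_+$, note that on $\F$ the singular values are distinct, strictly positive, and avoid the kink at $\lambda$, invoke the differentiability of the SVD (Lemma~\ref{lemma:diff_svd}) together with Theorem~\ref{theorem:div_simple}, and substitute. One minor remark: in the $|m-n|$ term you correctly used $f_i(\sigma_i)/\sigma_i=(1-\lambda/\sigma_i)_+$, which is what the proof of Theorem~\ref{theorem:div_simple} actually derives and what the corollary asserts, even though the displayed formulas \eqref{eq:gen-div} and \eqref{eq:theorem:div_simple} carry a stray prime on $f_i$ in that term.
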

Since $\svt_\lambda$ is of the form~\eqref{eq:spectral_function} with
    \begin{equation}
\label{eq:fi}
f_i(\sigma)     =   (\sigma - \lambda)_+, \quad f_i'(\sigma) = \ind\{\sigma > \lambda\},
\end{equation}
for $i = 1,\ldots, \min(m, n)$ and $\sigma\neq\lambda$, we see that it
is differentiable at the singular value matrix of any element of
$\F$. Because the elements of $\F$ are also simple and full-rank, we
conclude $\svt_\lambda$ is differentiable over $\F$ by
Lemma~\ref{lemma:diff_svd}. Therefore, the proof
of~\eqref{eq:lemma:svt_div:svt_div} is a special case of the general
Theorem~\ref{theorem:div_simple} from
Section~\ref{section:general_theory}.

\subsection{The complex case}
\label{section:sure_for_svt:cpx}

In the complex case, we need to analyze the conditions for SURE with
respect to model~\eqref{eq:mvn_mean_estimation_cpx}, which can be
expressed in vector form as
\begin{equation}\label{eq:mvn_mean_estimation_vector_cpx}
  \begin{bmatrix}\real(\vc{Y}) \\ \imag(\vc{Y})\end{bmatrix} = \begin{bmatrix} \real(\vc{X}_0) \\ \imag(\vc{X}_0)\end{bmatrix} + \begin{bmatrix}\real(\vc{W})\\ \imag(\vc{W})\end{bmatrix}.
   \end{equation}
   We thus need to study the existence of the weak partial derivatives
\begin{equation*}
    \frac{\partial}{\partial\real(Y_{ij})}\real(\svt_{\lambda})_{ij}\qquad\text{and}\qquad \frac{\partial}{\partial\imag(Y_{ij})}\imag(\svt_{\lambda})_{ij},
\end{equation*}
and whether they satisfy our integrability conditions, as these are the only
ones that play a role in the divergence.

\begin{lemma}\label{lemma:svt_meets_sure_conditions_cpx}
  The mappings $\svt_\lambda$, $\real{(\svt_\lambda)}$ and
  $\imag{(\svt_\lambda)}$ obey the assumptions of
  Proposition~\ref{proposition:sure}.
\end{lemma}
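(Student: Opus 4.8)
The plan is to reduce to the real case by the same route as Lemma~\ref{lemma:svt_meets_sure_conditions}, the only extra work being bookkeeping for the real/imaginary identification. First I would identify $\C^{m\times n}$ with $\R^{2mn}$ via the map $\vc{Y}\mapsto(\real(\vc{Y}),\imag(\vc{Y}))$ used in~\eqref{eq:mvn_mean_estimation_vector_cpx}, and record that under this identification the squared Frobenius norm $\|\vc{Y}-\vc{X}\|_F^2=\sum_{ij}|Y_{ij}-X_{ij}|^2$ equals $\|\real(\vc{Y})-\real(\vc{X})\|_F^2+\|\imag(\vc{Y})-\imag(\vc{X})\|_F^2$, i.e.\ the squared Euclidean norm on $\R^{2mn}$. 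Thus $\R^{2mn}$ with this structure is a real Hilbert space and the proximity map relative to it is the usual one, so that the closed form~\eqref{eq:svt} obtained from a complex SVD is exactly the solution of~\eqref{eq:svt_as_prox}, namely the proximity operator of $\lambda\|\cdot\|_*$ on $\R^{2mn}$.

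Next I would argue that $\svt_\lambda$, $\real(\svt_\lambda)$ and $\imag(\svt_\lambda)$ are Lipschitz maps vanishing at the origin. The complex nuclear norm is a norm, hence a finite-valued convex function of $(\real(\vc{X}),\imag(\vc{X}))$, so it is proper, convex and lower semicontinuous. By the (firm) non-expansiveness of proximity operators of such functions (Chapter~6, Section~30 in~\cite{RockafellarCA}), $\svt_\lambda$ is $1$-Lipschitz from $\R^{2mn}$ to $\R^{2mn}$, and clearly $\svt_\lambda(\vc{0})=\vc{0}$ since all singular values vanish there. The maps $\real(\svt_\lambda)$ and $\imag(\svt_\lambda)$ are the compositions of $\svt_\lambda$ with the two linear coordinate projections $\R^{2mn}\to\R^{mn}$, which are $1$-Lipschitz and vanish at $\vc{0}$; hence they too are Lipschitz maps $\R^{2mn}\to\R^{mn}$ vanishing at $\vc{0}$.

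Finally I would invoke an extension of Lemma~\ref{lemma:lipschitz_implies_weak_differentiability} to Lipschitz maps between Euclidean spaces of (possibly different) dimensions: its proof is purely coordinatewise — Rademacher's theorem for a.e.\ differentiability of each component, the a.e.\ coincidence of classical and weak derivatives for Lipschitz functions, and Cauchy--Schwarz together with $\ev\|\vc{Y}\|_F^2<\infty$ under~\eqref{eq:mvn_mean_estimation_cpx} for the integrability bounds — so it applies verbatim. Applying it to $\svt_\lambda$, $\real(\svt_\lambda)$ and $\imag(\svt_\lambda)$ shows each component is weakly differentiable Lebesgue-a.e.\ in $\R^{2mn}$; in particular the partials $\partial\real(\svt_\lambda)_{ij}/\partial\real(Y_{ij})$ and $\partial\imag(\svt_\lambda)_{ij}/\partial\imag(Y_{ij})$ exist weakly and satisfy $\ev\{|\real(Y_{ij})\,\real(\svt_\lambda)_{ij}|\}<\infty$, $\ev\{|\partial_{\real(Y_{ij})}\real(\svt_\lambda)_{ij}|\}<\infty$, and likewise for the imaginary parts. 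Writing the estimator in the form $\vc{Y}+g(\vc{Y})$ with $g=\svt_\lambda-\mathrm{id}$ and absorbing the trivially integrable identity contributions $\ev\{\real(Y_{ij})^2\}$ and the additive constant exactly as in the display preceding Section~\ref{section:sure_for_svt:real}, all hypotheses of Proposition~\ref{proposition:sure} are verified.

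There is no deep obstacle here; the only points requiring care are checking that non-expansiveness of the complex-matrix prox with respect to the Frobenius norm transfers to $1$-Lipschitzness with respect to the Euclidean norm on $\R^{2mn}$ — i.e.\ that the real inner product underlying~\eqref{eq:mvn_mean_estimation_vector_cpx} is the one for which~\eqref{eq:svt_as_prox} is a genuine proximity problem — and that the proof of Lemma~\ref{lemma:lipschitz_implies_weak_differentiability} never uses squareness of the domain or equality of the domain and codomain dimensions. Both are routine to confirm.
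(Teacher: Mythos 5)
Your proposal is correct and follows essentially the same route as the paper: the paper likewise observes that $\svt_\lambda$ is the proximity operator of the nuclear norm, hence non-expansive (citing Proposition 2.27 of \cite{BaushkeCombettesCMO} for the complex setting), deduces that the real and imaginary parts are Lipschitz, and applies Lemma~\ref{lemma:lipschitz_implies_weak_differentiability} to each. Your added care about the identification of $\C^{m\times n}$ with $\R^{2mn}$ and about the lemma applying to maps between spaces of different dimensions is sound but is left implicit in the paper.
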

\begin{proof}
  As in the real case, $\svt_\lambda$ is the prox-function of
  the nuclear norm and is thus Lipschitz and non-expansive (see
  Proposition 2.27 in~\cite{BaushkeCombettesCMO}). Hence, the real and
  imaginary parts are Lipschitz. Applying
  Lemma~\ref{lemma:lipschitz_implies_weak_differentiability} to both
  the real and imaginary components proves the claim.
\end{proof}

For the divergence, we write
\begin{equation}\label{eq:def_div_cpx}
  \div\, (\svt_\lambda(\vc{X})) = \div_{\real(\vc{X})}\,(\real(\svt_\lambda)(\vc{X})) + \div_{\imag(\vc{X})}\,(\imag(\svt_\lambda)(\vc{X})).
\end{equation}
As in the real case, we can discard sets of Lebesgue measure
zero. Consider then the analogue of $\F$, whose complement has
Lebesgue measure zero.
\begin{corollary}\label{theorem:svt_div_cpx}
  The mapping $\svt_\lambda$ is differentiable over $\F$, and we have
    \begin{equation}\label{eq:lemma:svt_div_cpx:svt_div}
      \div\, (\svt_\lambda(\vc{X})) = \sum_{i = 1}^{\min(m, n)}\left[\ind\{\sigma_i > \lambda\} + (2|m-n| + 1)\left(1 - \frac{\lambda}{\sigma_i}\right)_+\right]+ 4\sum_{i\neq j,\, i, j =
      1}^{\min(m, n)} \frac{\sigma_i (\sigma_i - \lambda)_+}{\sigma_i^2 - \sigma_j^2}.
    \end{equation}
\end{corollary}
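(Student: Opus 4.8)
The plan is to reduce the complex case to the real case by treating the complex-valued model~\eqref{eq:mvn_mean_estimation_cpx} as a real-valued problem on matrices of twice the size. A complex $m\times n$ matrix $\vc{Y} = \real(\vc{Y}) + \i\,\imag(\vc{Y})$ can be represented by the real $2m\times 2n$ matrix
\[
  \Phi(\vc{Y}) = \begin{bmatrix} \real(\vc{Y}) & -\imag(\vc{Y}) \\ \imag(\vc{Y}) & \real(\vc{Y}) \end{bmatrix},
\]
and this map has two crucial properties: it is a (scaled) isometry from $\C^{m\times n}$ with its real inner product into $\R^{2m\times 2n}$, and the singular values of $\Phi(\vc{Y})$ are exactly the singular values $\sigma_1,\ldots,\sigma_{\min(m,n)}$ of $\vc{Y}$, each with multiplicity two. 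Under this correspondence, $\svt_\lambda$ acting on $\vc{Y}$ is intertwined with $\svt_\lambda$ acting on $\Phi(\vc{Y})$, i.e.\ $\Phi(\svt_\lambda(\vc{Y})) = \svt_\lambda(\Phi(\vc{Y}))$, because soft-thresholding of the singular values commutes with the block embedding.

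With this dictionary in place, the divergence of $\svt_\lambda$ in the complex sense, as defined in~\eqref{eq:def_div_cpx}, equals one half of the (real) divergence of $\svt_\lambda$ evaluated at $\Phi(\vc{Y})$ --- the factor $1/2$ coming from the fact that $\Phi$ doubles each coordinate's multiplicity while the inner product picks up a matching factor. One then applies Corollary~\ref{theorem:svt_div} (equivalently, the general formula~\eqref{eq:lemma:svt_div:svt_div}, valid since $\Phi(\vc{Y})$ is full-rank whenever $\vc{Y}$ is, though it is \emph{not} simple) to the $2m\times 2n$ matrix $\Phi(\vc{Y})$ whose singular values are $\sigma_1,\sigma_1,\sigma_2,\sigma_2,\ldots$. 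Because $\Phi(\vc{Y})$ is never simple, this direct route requires the general spectral machinery of Section~\ref{section:general_theory} applied to a matrix with repeated singular values, which is exactly where the difficulty lies; it is cleaner to instead recompute the divergence directly, mirroring the proof of the real case in Section~\ref{section:general_theory} but bookkeeping the complex structure, which is what we do in Section~\ref{section:general_theory} (Theorem~\ref{theorem:div_simple} and its complex analogue).

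Carrying out the bookkeeping: write $\svt_\lambda(\vc{Y}) = \vc{U} f(\vc{\Sigma}) \vc{V}^*$ with $f_i(\sigma) = (\sigma-\lambda)_+$, differentiate with respect to the real and imaginary parts of each entry of $\vc{Y}$ at a point of $\F$, and sum the diagonal terms. The perturbation splits, as in the real case, into (i) a diagonal part where only the singular values move, contributing $\sum_i f_i'(\sigma_i) = \sum_i \ind\{\sigma_i>\lambda\}$; (ii) an ``off-diagonal within the $\min(m,n)\times\min(m,n)$ block'' part, where pairs $(i,j)$ with $i\neq j$ mix, contributing the double sum --- in the complex case each such pair is counted with weight $4$ rather than $2$, because both real and imaginary perturbations of $u_i v_j^*$ and $u_j v_i^*$ now contribute; (iii) a part coming from the rectangular ``tail,'' the $|m-n|$ extra singular directions, which contributes $f_i(\sigma_i)/\sigma_i$ per direction as before, but now there is an \emph{additional} isotropic contribution of $f_i(\sigma_i)/\sigma_i$ from rotating $u_i v_i^*$ in the complex plane orthogonally to the ``radial'' direction --- this is precisely the ``$+1$'' in the coefficient $(2|m-n|+1)$, and the appearance of $1/\sigma_i$ even for square matrices, noted in the discussion following~\eqref{eq:weak_div_for_sure_cpx}. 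The hard part, concretely, is identifying and correctly weighting this extra isotropic rotational mode: one must verify that, at a simple matrix $\vc{Y}$, the complex one-parameter family $\vc{Y}(t) = \vc{U}\,\diag(e^{\i t}\sigma_i \text{ on the }i\text{-th slot}, \sigma_j \text{ else})\,\vc{V}^*$ is a valid variation, that $\svt_\lambda$ is differentiable along it, and that its derivative contributes $(\sigma_i-\lambda)_+/\sigma_i$ to the divergence; once this is in hand, assembling the three contributions yields~\eqref{eq:lemma:svt_div_cpx:svt_div}. The Lipschitz and integrability hypotheses of Proposition~\ref{proposition:sure} were already verified in Lemma~\ref{lemma:svt_meets_sure_conditions_cpx}, so only the closed-form divergence over $\F$ remains, and setting the value to $0$ off $\F$ (a Lebesgue-null set) is harmless for the weak divergence, exactly as in the real case.
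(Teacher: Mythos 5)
Your proposal is correct and follows essentially the same route as the paper: after discarding the (rightly abandoned) $2m\times 2n$ real block embedding, you reduce the claim to the general complex divergence formula for spectral functions (Theorem~\ref{theorem:div_simple_cpx}) with $f_i(\sigma)=(\sigma-\lambda)_+$, which is exactly the paper's one-line proof, and your sketch of that theorem's derivation matches its actual proof, including the key identification of the singular-vector phase mode as the source of the extra $+1$ in the coefficient $(2|m-n|+1)$. The only nit is that your phrase ``contributes $f_i(\sigma_i)/\sigma_i$ per direction'' for the rectangular tail should be read per real coordinate (there are $2|m-n|$ of them, from real and imaginary perturbations), which you clearly intend since you state the correct final coefficient.
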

%The argument used in Theorem~\ref{theorem:svt_div} hold in this case.
The argument regarding the differentiability of $\svt_\lambda$ applies here
as well. The proof of the closed-form expression for the divergence is a
special case of the general Theorem~\ref{theorem:div_simple_cpx} from Section~\ref{section:general_theory} with $f_i$ as in \eqref{eq:fi}.

\section{Differentiability of Spectral Functions: General SURE
  Formulas}
\label{section:general_theory}

This section introduces some results concerning the differentiability
of spectral functions.\footnote{Here, we set aside any statistical
  interpretation to focus on differentiability and closed-form
  expression for the divergence of spectral functions.}
% In particular, we will study the existence of derivatives over
% certain sets of interest, which will turn out to be reasonably large
% for applications.
To simplify the exposition, we work with $m \times n$ matrices
$\vc{X}$ with $m \geq n$, as all our arguments apply with minor
modifications to $\vc{X}^*$.  Below, $\{\vc{E}^{ij}\}_{(i,j)\in\idx}$
is the canonical basis of $\R^{n\times m}$, $\id_{n\times n}$ is the
$n$-dimensional identity matrix and we set
\begin{equation*}
    \id_{m\times n} = \begin{bmatrix} \id_{n\times n} \\ \vc{0} \end{bmatrix}.
\end{equation*}
In the real and complex cases, we make use of the reduced and full
SVD. The reduced SVD is written as $\vc{X} =
\vc{U}\vc{\Sigma}\vc{V}^*$, where $\vc{U}$ is $m \times n$ with
orthonormal columns and $\vc{V}$ is unitary. In the full SVD, $\vc{X}
= \tilde{\vc{U}} \vc{\Sigma}\vc{V}^*$ where $\tilde{\vc{U}}$ is
$m\times m$ and extends $\vc{U}$ to a unitary matrix. Finally, we
recall from Section~\ref{section:introduction} that a spectral
function is of the form $f(\vc{X}) = \vc{U} f(\vc{\Sigma}) \vc{V}^*$
with
\begin{equation}\label{eq:spectral_function_on_sigma}
  f(\vc{\Sigma}) = \diag( f_1(\sigma_1), \ldots, f_n(\sigma_n) ),
\end{equation}
where we assume that each $f_i:\Rp\mapsto\Rp$ is differentiable. We
note that we can relax the differentiability assumption and only
require differentiability in a neighborhood of the spectrum of the
matrix under study.

To find a closed-form for the divergence of $f$, we first need to
determine whether $f$ is differentiable. Using notation from
differential calculus (see \cite{Edelman05}), recall that $f$ is
differentiable at $\vc{X}$ if there exists a linear mapping $\diff
f_{\vc{X}}$, called the differential of $f$ at $\vc{X}$, such that for
all $\vc{\Delta}$,
\begin{equation*}
  \lim_{\vc{\Delta} \rightarrow \vc{0}} \frac{\|f(\vc{X} + \vc{\Delta}) - f(\vc{X}) - \diff f_{\vc{X}}[\vc{\Delta}]\|_F}{\|\vc{\Delta}\|_F} = 0.
\end{equation*}
% where we write in brackets the matrix at which $\diff f_{\vc{X}}$ is being evaluated.
Differentiation obeys the standard rules of calculus and we have
\begin{equation}\label{eq:diff_product_g_rule}
  \diff f [\vc{\Delta}] = \diff \vc{U} [\vc{\Delta}]\, f(\vc{\Sigma})\, \vc{V}^* + \vc{U}\, \diff(f\circ\vc{\Sigma}) [\vc{\Delta}]\,\vc{V}^* + \vc{U}\, f(\vc{\Sigma})
  \, \diff\vc{V}[\vc{\Delta}]^*.
\end{equation}
where, to lighten the notation above, we have not indicated the point
$\vc{X}$ at which the differential is calculated (we shall use
subscripts whenever necessary).
By the chain rule, $\diff
(f\circ\vc{\Sigma})_\vc{X}[\vc{\Delta}] = \diff
f_\vc{\Sigma}[\diff\vc{\Sigma}_\vc{X}[\vc{\Delta}]]$ so
that the differentiability of $f$ depends upon the existence of
differentials for $\vc{U}$, $\vc{V}$ and $\vc{\Sigma}$ at $\vc{X}$
together with that of $f$ at $\vc{\Sigma}$.

Standard analysis arguments immediately establish that the function
$f$ is differentiable at $\vc{\Sigma}$ whenever $\vc{\Sigma}$ is
simple, as in this case there is no ambiguity as to which function
$f_i$ is being applied to the singular values. Furthermore, the
differentiability of the SVD of a simple matrix with full-rank is a
consequence of Theorem 1 and Theorem 2 in~\cite{Magnus85} applied to
$\vc{X}^*\vc{X}$ and $\vc{X}\vc{X}^*$~(see also~\cite{Lewis01} and~\cite{Sun02}). We summarize these arguments in
the following lemma.

%Standard analysis arguments immediately establish that the function
%$f$ is differentiable at $\vc{\Sigma}$ whenever $\vc{\Sigma}$ is
%simple, as in this case there is no ambiguity as to which function
%$f_i$ is being applied to the singular values.
% Standard analysis
% arguments show the existence of the differential in this case. As we
% will see, assuming $\vc{\Sigma}$ describes the generic behavior of a
% matrix $\vc{X}$.
% Thus we only need to study the differentiability of the SVD. To this end, we use the following lemma, which is valid for both real- and complex-valued matrices.
\begin{lemma}\label{lemma:diff_svd}
  Let $\vc{X} = \vc{U}\vc{\Sigma}\vc{V}^*$ be a simple and full-rank
  matrix and $f$ be a spectral function. % The following
 % statements are true:
    \begin{enumerate}
    \item The factors of the SVD are differentiable in a neighborhood
      of $\vc{X}$.
    \item If $f$ is differentiable at $\vc{\Sigma}$, then it is
      differentiable at $\vc{X}$.
    \end{enumerate}
\end{lemma}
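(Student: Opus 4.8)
The plan is to obtain both parts from the known differentiability of the symmetric eigenvalue decomposition, applied to the Gram matrices $\vc{X}^*\vc{X}$ and $\vc{X}\vc{X}^*$, and then to assemble part~2 from the product and chain rules. For part~1, note first that since $\vc{X}$ is simple and full-rank, $\vc{A}(\vc{X}) := \vc{X}^*\vc{X}$ is (Hermitian) positive definite with $n$ distinct eigenvalues $\sigma_1^2 > \cdots > \sigma_n^2 > 0$, and $\vc{X}\mapsto\vc{A}(\vc{X})$ is polynomial in the entries, hence smooth. Theorems~1 and~2 of~\cite{Magnus85} (see also~\cite{Lewis01,Sun02}) assert that, near a symmetric matrix with simple spectrum, the eigenvalues and a \emph{local} smooth branch of unit eigenvectors depend analytically on the matrix; composing with $\vc{X}\mapsto\vc{A}(\vc{X})$ gives differentiability of the $\sigma_i^2$ and of a branch $\vc{v}_i = \vc{v}_i(\vc{X})$ of right singular vectors near our point. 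Since each $\sigma_i>0$, the map $t\mapsto\sqrt t$ is smooth at $\sigma_i^2$, so each $\sigma_i$—and therefore $\vc{\Sigma}$ and $\vc{V}=[\vc{v}_1,\ldots,\vc{v}_n]$—is differentiable in a neighborhood of $\vc{X}$.

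It remains to produce a compatible smooth left factor. The clean way is to set $\vc{u}_i := \vc{X}\vc{v}_i/\sigma_i$, which is a differentiable function of $\vc{X}$ (here full-rank is used to divide by $\sigma_i$). These vectors are orthonormal, since $\vc{u}_i^*\vc{u}_j = \vc{v}_i^*\vc{A}(\vc{X})\vc{v}_j/(\sigma_i\sigma_j) = \delta_{ij}$, and by construction $\sum_i \sigma_i \vc{u}_i\vc{v}_i^* = \vc{X}\,\vc{V}\vc{V}^* = \vc{X}$, so $\vc{U}=[\vc{u}_1,\ldots,\vc{u}_n]$ is a differentiable choice of reduced left singular factor realizing $\vc{X}=\vc{U}\vc{\Sigma}\vc{V}^*$; extending $\vc{U}$ to $\tilde{\vc{U}}$ by a differentiable completion of the orthonormal frame (again available locally since the complementary space varies smoothly) handles the full SVD. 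This establishes part~1.

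For part~2, with $\vc{U}$, $\vc{\Sigma}$, $\vc{V}$ differentiable at $\vc{X}$ and $f$ differentiable at $\vc{\Sigma}$ by hypothesis, the chain rule gives differentiability of $\vc{X}\mapsto f(\vc{\Sigma}(\vc{X}))$ at $\vc{X}$, namely $\diff(f\circ\vc{\Sigma})_{\vc{X}}[\vc{\Delta}] = \diff f_{\vc{\Sigma}}[\diff\vc{\Sigma}_{\vc{X}}[\vc{\Delta}]]$, and the product rule then yields differentiability of $f(\vc{X}) = \vc{U}\,f(\vc{\Sigma})\,\vc{V}^*$, with differential as in~\eqref{eq:diff_product_g_rule}. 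Note that although the local branch of singular vectors is not unique, $f(\vc{X})$ itself is branch-independent—two branches of $(\vc{U},\vc{V})$ differ by unit scalars on corresponding columns which cancel in $\vc{U}f(\vc{\Sigma})\vc{V}^*$ because $f(\vc{\Sigma})$ is diagonal—so a single smooth branch suffices to read off the differential of $f$.

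The one genuinely delicate point is the consistency issue inside part~1: the eigen-decomposition theorems act on $\vc{X}^*\vc{X}$ and $\vc{X}\vc{X}^*$ separately and only pin down singular vectors up to a sign (phase, in the complex case), so a naive pairing of the two factors need not satisfy $\vc{X}=\vc{U}\vc{\Sigma}\vc{V}^*$ smoothly. Defining $\vc{u}_i = \vc{X}\vc{v}_i/\sigma_i$ is precisely what sidesteps any matching argument, and full-rank plus simplicity are exactly the hypotheses that make this construction legitimate and smooth; everything after that is routine bookkeeping with the product rule.
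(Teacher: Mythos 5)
Your proposal is correct and follows essentially the same route as the paper, which likewise obtains differentiability of the SVD factors from Theorems~1 and~2 of~\cite{Magnus85} applied to the Gram matrices and then invokes the product and chain rules. The paper leaves this as a citation-level summary; your only real addition is the construction $\vc{u}_i = \vc{X}\vc{v}_i/\sigma_i$, which neatly resolves the left/right matching ambiguity that a literal separate diagonalization of $\vc{X}^*\vc{X}$ and $\vc{X}\vc{X}^*$ would leave open.
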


We now focus on determining the divergence of $f$ in closed-form. Since there
are differences between the real- and complex-valued cases, we treat them
separately.

\subsection{The real case}

% We view $\R^{m\times n}$ as a Hilbert space endowed with the standard inner product, denoted as $\iprod{\cdot}{\cdot}$, which induces the Frobenius norm. The following
% property will be of use to us
% \begin{equation}\label{eq:inner_product_property}
%     \forall\, \vc{X},\vc{Y}\in\R^{m\times n},\, \vc{U}\in\R^{m\times m},\, \vc{V}\in \R^{n\times n}:\quad \iprod{\vc{X}}{\vc{U} \vc{Y} \vc{V}} = \iprod{\vc{U}^T \vc{X} \vc{V}^T}{\vc{Y}}.
% \end{equation}

% In what follows, we will consider the so-called reduced SVD of a
% matrix, where $\vc{X} = \vc{U}\vc{\Sigma}\vc{V}^T$, with $\vc{V}$
% orthogonal and with $\vc{U}^T\vc{U} = \id_{n\times n}$. Since
% $\vc{U}\vc{U}^T\neq \id_{m\times m}$ when $m\neq n$, we will
% consider $\tilde{\vc{U}}$ as an arbitrary yet fixed extension of
% $\vc{U}$ to an orthogonal matrix.

Lemma~\ref{lemma:diff_svd} asserts that the SVD is differentiable at any
simple matrix $\vc{X}$ with full-rank. The following result shows that we can
determine the differentials of each factor in closed-form.

%Lemma~\ref{lemma:diff_svd} asserts that the SVD is differentiable in any
%neighborhood of a simple and full-rank element. The following lemma shows
%that it is possible to extend the differentials to any simple matrix \ejc{I
%don't see it stated in the lemma}\, and it also provides a closed-form
%expression for them.
% This result extends that found in~\cite{Papadopoulo00}.
%\ejc{In what way?}
\begin{lemma}\label{lemma:diff_svd_simple}
  Let $\vc{X}$ be simple and full-rank. Then the differentials of
  $\vc{U}$ and $\vc{V}$ are given by
\[
\diff\vc{U}[\vc{\Delta}] = \tilde{\vc{U}}
\vc{\Omega}_{\tilde{\vc{U}}}[\vc{\Delta}], \qquad
\diff\vc{V}[\vc{\Delta}] = \vc{V} \vc{\Omega}_\vc{V}[\vc{\Delta}]^*,
\]
where $\vc{\Omega}_{\tilde{\vc{U}}}$ and $\vc{\Omega}_\vc{V}$ are
given in closed-form \eqref{eq:omega_entries} ($\diff\vc{U}$ is
independent of the choice of $\tilde{\vc{U}}$). Finally,
$\diff\vc{\Sigma}$ is given by \eqref{eq:diff_svalues}.
%  exist linear
%   mappings $\vc{\Omega}_\vc{V}$, $\diff\vc{\Sigma}$ and
%   $\vc{\Omega}_{\tilde{\vc{U}}}$ given in closed-form as a function of
%   the SVD of $\vc{X}$ and obeying
% 	\begin{equation}\label{eq:lemma:diff_svd_simple}
%           \tilde{\vc{U}}^* \diff\vc{X}[\vc{\Delta}] \vc{V} = \vc{\Omega}_{\tilde{\vc{U}}}[\vc{\Delta}]\vc{\Sigma} + \diff\vc{\Sigma}[\vc{\Delta}] +
%           \vc{\Sigma}
%           \vc{\Omega}_\vc{V}[\vc{\Delta}] \quad \text{for all }  \vc{\Delta}\in\R^{m\times n},
%     \end{equation}
%     such that $\diff\vc{U}[\vc{\Delta}] = \tilde{\vc{U}} \vc{\Omega}_{\tilde{\vc{U}}}[\vc{\Delta}]$, and $\diff\vc{V}[\vc{\Delta}] = \vc{V} \vc{\Omega}_\vc{V}[\vc{\Delta}]^*$. Furthermore,
%     $\diff\vc{U}[\vc{\Delta}]$ is independent of the choice of $\tilde{\vc{U}}$.
\end{lemma}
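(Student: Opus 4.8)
The strategy is to differentiate the defining relations of the SVD implicitly. Since $\vc{X} = \vc{U}\vc{\Sigma}\vc{V}^*$ with $\vc{U}$ having orthonormal columns and $\vc{V}$ unitary, I would start from the three constraints $\vc{U}^*\vc{U} = \id_{n\times n}$, $\vc{V}^*\vc{V} = \id_{n\times n}$, and the defining identity itself. Differentiating the orthogonality constraints shows that $\vc{U}^*\, \diff\vc{U}[\vc{\Delta}]$ is skew-symmetric (on the $n\times n$ block) and similarly for $\vc{V}$, which motivates writing $\diff\vc{U}[\vc{\Delta}] = \tilde{\vc{U}}\, \vc{\Omega}_{\tilde{\vc{U}}}[\vc{\Delta}]$ and $\diff\vc{V}[\vc{\Delta}] = \vc{V}\, \vc{\Omega}_\vc{V}[\vc{\Delta}]^*$ for matrices $\vc{\Omega}_{\tilde{\vc{U}}}$ (of size $m\times n$) and $\vc{\Omega}_\vc{V}$ (of size $n\times n$) to be determined. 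The use of the full $\tilde{\vc{U}}$ rather than just $\vc{U}$ is forced because $\diff\vc{U}[\vc{\Delta}]$ need not lie in the column space of $\vc{U}$: its component orthogonal to $\col(\vc{U})$ is exactly what produces the $|m-n|$ terms later.

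**Key steps.** First I would differentiate $\vc{X} = \vc{U}\vc{\Sigma}\vc{V}^*$ using the product rule \eqref{eq:diff_product_g_rule} (with $f = \mathrm{id}$), then left-multiply by $\tilde{\vc{U}}^*$ and right-multiply by $\vc{V}$ to obtain a relation of the form $\tilde{\vc{U}}^*\vc{\Delta}\vc{V} = \vc{\Omega}_{\tilde{\vc{U}}}[\vc{\Delta}]\,\id_{n\times n}\vc{\Sigma} + \diff\vc{\Sigma}[\vc{\Delta}] + \vc{\Sigma}\,\vc{\Omega}_\vc{V}[\vc{\Delta}]^*$ (using that $\vc{U}^*\vc{\Delta}\vc{V}$ embeds into the top block). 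Introduce the shorthand $\vc{A} := \tilde{\vc{U}}^*\vc{\Delta}\vc{V}$ and split it into its top $n\times n$ block $\vc{A}_1$ and bottom $(m-n)\times n$ block $\vc{A}_2$. The bottom block immediately gives the off-block entries of $\vc{\Omega}_{\tilde{\vc{U}}}$: since $\diff\vc{\Sigma}$ and $\vc{\Omega}_\vc{V}$ contribute nothing there, one reads off the rows of $\vc{\Omega}_{\tilde{\vc{U}}}$ indexed by $n+1,\ldots,m$ as $(\vc{A}_2)_{kj}/\sigma_j$. For the top block, using that $\vc{\Omega}_{\tilde{\vc{U}}}$ restricted to the $n\times n$ block is skew-symmetric and $\vc{\Omega}_\vc{V}$ is skew-symmetric, I would look at the $(i,j)$ and $(j,i)$ entries of the top-block equation simultaneously: writing $\omega^U_{ij}$ and $\omega^V_{ij}$ for the entries, I get the $2\times 2$ linear system $\sigma_j\,\omega^U_{ij} - \sigma_i\,\omega^V_{ij} = (\vc{A}_1)_{ij}$ and $-\sigma_i\,\omega^U_{ij} + \sigma_j\,\omega^V_{ij} = -(\vc{A}_1)_{ji}$ (using skew-symmetry to flip signs), whose determinant is $\sigma_j^2 - \sigma_i^2 \neq 0$ precisely because $\vc{X}$ is simple. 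Solving gives
\[
\omega^U_{ij} = \frac{\sigma_j (\vc{A}_1)_{ij} + \sigma_i (\vc{A}_1)_{ji}}{\sigma_j^2 - \sigma_i^2}, \qquad
\omega^V_{ij} = \frac{\sigma_i (\vc{A}_1)_{ij} + \sigma_j (\vc{A}_1)_{ji}}{\sigma_j^2 - \sigma_i^2},
\]
and the diagonal $i = j$ part of the top block yields $(\diff\vc{\Sigma}[\vc{\Delta}])_{ii} = (\vc{A}_1)_{ii} = (\tilde{\vc{U}}^*\vc{\Delta}\vc{V})_{ii} = \vc{u}_i^*\vc{\Delta}\vc{v}_i$, which is \eqref{eq:diff_svalues}. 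I would package the closed-form entries into the display labelled \eqref{eq:omega_entries}.

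**Remaining points and the main obstacle.** After solving for all entries, I would verify well-definedness: $\diff\vc{U}[\vc{\Delta}] = \tilde{\vc{U}}\vc{\Omega}_{\tilde{\vc{U}}}[\vc{\Delta}]$ does not depend on the choice of unitary extension $\tilde{\vc{U}}$, because $\vc{\Omega}_{\tilde{\vc{U}}}$ on the bottom rows has the form $(\text{rows of }\tilde{\vc{U}}^*\vc{\Delta}\vc{V})/\sigma_j$, so $\tilde{\vc{U}}(\text{bottom block contribution}) = (\id - \vc{U}\vc{U}^*)\vc{\Delta}\vc{V}\vc{\Sigma}^{-1}$, a quantity intrinsic to $\vc{X}$ and $\vc{\Delta}$; the top-block contribution only involves $\vc{U}$ and $\vc{\Sigma}$. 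I also need to check that the linear maps $\vc{\Delta}\mapsto\diff\vc{U}[\vc{\Delta}]$, etc., genuinely are the differentials — but this is already guaranteed by Lemma~\ref{lemma:diff_svd}, which asserts the SVD factors are differentiable; I am merely computing expressions the differentials are known a priori to have. The main obstacle is bookkeeping: keeping the index ranges straight (the $n\times n$ block versus the extra $m-n$ rows), correctly exploiting skew-symmetry to set up the $2\times 2$ system with the right signs, and making sure the degenerate rows/columns (the $i=j$ case, and the bottom block where no $\vc{\Omega}_\vc{V}$ term appears) are handled as separate cases rather than swept into the generic formula. There is no deep difficulty — simplicity of $\vc{X}$ is exactly what makes every $2\times 2$ system invertible, and full rank is what lets us divide by $\sigma_j$ in the bottom block — but the linear-algebra manipulations must be done carefully to land on the stated closed forms.
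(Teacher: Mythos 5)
Your proposal is correct and follows essentially the same route as the paper: implicit differentiation of $\vc{X}=\vc{U}\vc{\Sigma}\vc{V}^*$ and of the orthogonality constraints, conjugation by $\tilde{\vc{U}}^*$ and $\vc{V}$, skew-symmetry forcing the diagonal entries to vanish and yielding the $2\times 2$ systems whose determinant $\pm(\sigma_i^2-\sigma_j^2)$ is nonzero by simplicity, the bottom $(m-n)\times n$ block read off using full rank, and independence of the extension via $\vc{Q}\vc{Q}^*=\id-\vc{U}\vc{U}^*$ (this is the Edelman--Papadopoulo method the paper cites). Your resulting expression for $\Omega_{\tilde{\vc{U}},ij}$ agrees with \eqref{eq:omega_entries}; the only discrepancy is a harmless sign convention in how you define $\vc{\Omega}_{\vc{V}}$ (you carry $\vc{\Sigma}\vc{\Omega}_{\vc{V}}^*$ where the paper carries $\vc{\Sigma}\vc{\Omega}_{\vc{V}}$), which flips the sign of the intermediate $\omega^V_{ij}$ but not the final $\diff\vc{V}$.
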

\begin{proof}
  We follow the same method as in~\cite{Edelman05}, which also appears
  in~\cite{Papadopoulo00}. Let $\vc{X}\in\R^{m\times n}$ ($m \geq
  n$). For any matrix $\vc{\Delta}\in \R^{m\times n}$, we have
    \begin{equation}\label{eq:diff_product_svd}
      \diff\vc{X}[\vc{\Delta}] = \diff\vc{U}[\vc{\Delta}] \vc{\Sigma} \vc{V}^* + \vc{U}\diff\vc{\Sigma}[\vc{\Delta}] \vc{V}^* + \vc{U}\vc{\Sigma} \diff\vc{V}[\vc{\Delta}]^*.
    \end{equation}
    Since $\tilde{\vc{U}}$ and $\vc{V}$ are orthogonal, it follows that
    \begin{equation*}
    	\tilde{\vc{U}}^*\diff\vc{X}[\vc{\Delta}]\vc{V} = \tilde{\vc{U}}^* \diff\vc{U}[\vc{\Delta}] \vc{\Sigma} + \tilde{\vc{U}}^* \vc{U} \diff\vc{\Sigma}[\vc{\Delta}] + \tilde{\vc{U}}^* \vc{U}
    \vc{\Sigma}
    \diff\vc{V}[\vc{\Delta}]^* \vc{V}.
    \end{equation*}
    Introduce the matrices
    \begin{equation*}
    	\vc{\Omega}_{\tilde{\vc{U}}}[\vc{\Delta}] = \tilde{\vc{U}}^* \diff\vc{U}[\vc{\Delta}]\qquad\text{and}\qquad\vc{\Omega}_\vc{V}[\vc{\Delta}] = \diff\vc{V}[\vc{\Delta}]^* \vc{V},
    \end{equation*}
    so that
    \begin{equation}\label{eq:diff_product_svd_omega}
      \tilde{\vc{U}}^* \vc{\Delta} \vc{V} = \vc{\Omega}_{\tilde{\vc{U}}}[\vc{\Delta}] \vc{\Sigma} + \id_{m\times n} \diff\vc{\Sigma}[\vc{\Delta}] + \id_{m\times n} \vc{\Sigma}
      \vc{\Omega}_\vc{V}[\vc{\Delta}],
    \end{equation}
    where we used the fact that $\diff\vc{X}$ is the identity, whence
    $\diff\vc{X} [\vc{\Delta}] = \vc{\Delta}$. Moreover,
    \begin{equation*}
      \vc{0} = \diff\id_{n\times n}(\vc{\Delta})  = \vc{V}^* \diff\vc{V}[\vc{\Delta}] + \diff\vc{V}[\vc{\Delta}]^* \vc{V}\quad\Longrightarrow\quad \vc{V}^* \diff\vc{V}[\vc{\Delta}] =
      -\diff\vc{V}[\vc{\Delta}]^*
      \vc{V},
    \end{equation*}
    which says that $\vc{\Omega}_\vc{V}[\vc{\Delta}]$ is
    anti-symmetric. Similarly,
% writing $\tilde{\vc{U}}$ as $[\vc{U},
%     \vc{Q}]$,
%     \begin{equation*}
%         \vc{\Omega}_{\tilde{\vc{U}}}(\vc{\Delta}) = \begin{bmatrix}\vc{U}^* \diff\vc{U}[\vc{\Delta}] \\ \vc{Q}^* \diff\vc{U}[\vc{\Delta}]\end{bmatrix},
%     \end{equation*}
    \begin{equation*}
    	\vc{0} = \diff\id_{n\times n}(\vc{\Delta})  = \vc{U}^* \diff\vc{U}[\vc{\Delta}] + \diff\vc{U}[\vc{\Delta}]^* \vc{U}\quad\Longrightarrow\quad \vc{U}^* \diff\vc{U}[\vc{\Delta}] =
    -\diff\vc{U}[\vc{\Delta}]^*
    \vc{U},
    \end{equation*}
    and thus the upper $n\times n$ block of
    $\vc{\Omega}_{\tilde{\vc{U}}}[\vc{\Delta}]$ is also
    anti-symmetric. This fact and the skew-symmetry of
    $\vc{\Omega}_\vc{V}[\vc{\Delta}]$ yield the relations
    \begin{eqnarray}
      \Omega_{\tilde{\vc{U}}, ii}[\vc{\Delta}] & = & 0,\qquad i = 1,\ldots, n,\nonumber\\
      \Omega_{\vc{V}, ii}[\vc{\Delta}] & = & 0,\qquad i = 1,\ldots, n,\nonumber\\
      (\tilde{\vc{U}}^*\vc{\Delta} \vc{V})_{ij} & = & \Omega_{\tilde{\vc{U}}, ij}[\vc{\Delta}]\sigma_{j} + \sigma_{i}\Omega_{\vc{V}, ij}[\vc{\Delta}],\qquad i\neq j,\quad i,j = 1,\ldots,n,
    \label{eq:omega_eqns_upper}\\
    	-(\tilde{\vc{U}}^*\vc{\Delta} \vc{V})_{ji} & = & \Omega_{\tilde{\vc{U}}, ij}[\vc{\Delta}]\sigma_{i} + \sigma_{j}\Omega_{\vc{V}, ij}[\vc{\Delta}],\qquad i\neq j,\quad i,j =
    1,\ldots,n,\label{eq:omega_eqns_lower}
    \end{eqnarray}
    and
    \begin{equation}\label{eq:diff_svalues}
    	\diff\sigma_i[\vc{\Delta}] = (\tilde{\vc{U}}^* \vc{\Delta} \vc{V})_{ii},\qquad i = 1,\ldots, n.
    \end{equation}
    In particular, \eqref{eq:omega_eqns_upper} and
    \eqref{eq:omega_eqns_lower} can be summarized in the system of
    linear equations
    \begin{equation*}
    	\begin{bmatrix}\sigma_{j} & \sigma_{i} \\ \sigma_{i} & \sigma_{j}\end{bmatrix}\begin{bmatrix}\Omega_{\tilde{\vc{U}}, ij}[\vc{\Delta}] \\ \Omega_{\vc{V}, ij}[\vc{\Delta}]\end{bmatrix} =
    \begin{bmatrix} (\tilde{\vc{U}}^*\vc{\Delta} \vc{V})_{ij} \\ -(\tilde{\vc{U}}^*\vc{\Delta} \vc{V})_{ji}\end{bmatrix},\qquad i\neq j,\quad i,j = 1,\ldots,n.
    \end{equation*}
    Since $\vc{X}$ is simple, the coefficient matrix is invertible,
    and thus
    \begin{equation}\label{eq:omega_entries}
    	\begin{bmatrix}\Omega_{\tilde{\vc{U}}, ij}[\vc{\Delta}] \\ \Omega_{\vc{V}, ij}[\vc{\Delta}]\end{bmatrix} = -\frac{1}{\sigma_{i}^2 - \sigma_{j}^2}\begin{bmatrix}\sigma_{j} & \sigma_{i} \\
    -\sigma_{i} & -\sigma_{j}\end{bmatrix}\begin{bmatrix} (\tilde{\vc{U}}^*\vc{\Delta} \vc{V})_{ij} \\ (\tilde{\vc{U}}^*\vc{\Delta} \vc{V})_{ji}\end{bmatrix},\qquad i\neq j,\quad i,j =
    1,\ldots,n.
    \end{equation}
    These relations determine both $\diff\vc{\Sigma}[\vc{\Delta}]$,
    $\vc{\Omega}_\vc{V}[\vc{\Delta}]$ and the upper $n\times n$
    block of $\vc{\Omega}_{\tilde{\vc{U}}}[\vc{\Delta}]$
    completely. The lower $(m-n)\times n$ block of
    $\vc{\Omega}_{\tilde{\vc{U}}}[\vc{\Delta}]$ is determined
    from~\eqref{eq:diff_product_svd_omega}, as
    \begin{equation}\label{eq:omega_eqns_nonsquare}
    	(\tilde{\vc{U}}^*\vc{\Delta} \vc{V})_{ij} = \Omega_{\tilde{\vc{U}}, ij}[\vc{\Delta}]\sigma_{j},\qquad i = n + 1,\ldots, m,\quad  j = 1,\ldots, n.
    \end{equation}
    Since $\vc{X}$ has full-rank, this completely determines
    $\vc{\Omega}_{\tilde{\vc{U}}}[\vc{\Delta}]$.  We conclude that (i)
    $\diff\vc{\Sigma}[\vc{\Delta}]$,
    $\vc{\Omega}_{\vc{V}}[\vc{\Delta}]$ and
    $\vc{\Omega}_{\vc{U}}[\vc{\Delta}]$ are uniquely characterized by
    the SVD of $\vc{X}$, and (ii) they act linearly on
    $\vc{\Delta}$. Furthermore, the differentials
    obey~\eqref{eq:diff_product_svd}. We now prove that
    $\diff\vc{U}[\vc{\Delta}]$ is independent of the choice of
    $\tilde{\vc{U}}$. Decompose
    $\vc{\Omega}_{\tilde{\vc{U}}}[\vc{\Delta}]$ and $\tilde{\vc{U}}$
    as
    \begin{equation*}
        \vc{\Omega}_{\tilde{\vc{U}}}[\vc{\Delta}] = \begin{bmatrix} \vc{\Omega}_{\vc{U}}[\vc{\Delta}] \\ \vc{\Omega}_{\vc{Q}}[\vc{\Delta}]\end{bmatrix}\qquad\text{and}\qquad \tilde{\vc{U}} =
        \begin{bmatrix} \vc{U} & \vc{Q} \end{bmatrix},
    \end{equation*}
    where $\vc{\Omega}_{\vc{U}}[\vc{\Delta}]$ denotes the upper
    $n\times n$ block of $\vc{\Omega}_{\tilde{\vc{U}}}[\vc{\Delta}]$
    (which depends only on $\vc{U}$ by~\eqref{eq:omega_entries}) and
    $\vc{\Omega}_{\vc{Q}}[\vc{\Delta}]$ denotes the lower $(m-n)\times
    n$ block of $\vc{\Omega}_{\tilde{\vc{U}}}[\vc{\Delta}]$ (which
    by~\eqref{eq:omega_eqns_nonsquare} equals
    $\vc{Q}^*\vc{\Delta}\vc{V}\vc{\Sigma}^{-1}$). Then
    \begin{equation*}
        \diff\vc{U}[\vc{\Delta}] = \tilde{\vc{U}}\vc{\Omega}_{\tilde{\vc{U}}}[\vc{\Delta}] = \vc{U}\vc{\Omega}_{\vc{U}}[\vc{\Delta}] + \vc{Q}\vc{\Omega}_{\vc{Q}}[\vc{\Delta}] =
        \vc{U}\vc{\Omega}_{\vc{U}}[\vc{\Delta}] + (\vc{Q}\vc{Q}^*)\vc{\Delta}\vc{V}\vc{\Sigma}^{-1},
    \end{equation*}
    which is well defined since the orthogonal projector
    $\vc{Q}\vc{Q}^*$ is independent of the choice of $\vc{Q}$.
%    Finally, the differentials
%    $\diff\vc{U}_{\vc{X}}$, $\diff\vc{\Sigma}_{\vc{X}}$ and
%    $\diff\vc{V}_{\vc{X}}$ depend continuously on $\tilde{\vc{U}}$,
%    $\vc{\Sigma}$ and $\vc{V}$ (and thus on $\vc{X}$), and are
%    bounded.  By a continuity argument, every simple $\vc{X}$ with
%    full-rank is a point of differentiability for the SVD with the
%    differentials given by the aforementioned expressions. \ejc{I
%      don't understand what this means. Also, where is the extension to
%      simple matrices?}
\end{proof}

%\ejc{I am thinking that this should be either part of the lemma or
%  precede it, see my comments above.}  We have computed the
%differential of $\vc{U}$ and $\vc{V}$ since $\diff\vc{U}[\vc{\Delta}] =
%\tilde{\vc{U}} \vc{\Omega}_{\tilde{\vc{U}}}[\vc{\Delta}]$ and
%$\diff\vc{V}[\vc{\Delta}] = \vc{V} \vc{\Omega}_\vc{V}[\vc{\Delta}]^*$. The
%attentive reader will observe that the expression depends on
%$\tilde{\vc{U}}$ rather than $\vc{U}$, which may suggest that the
%differential dpends upon the extension of $\vc{U}$ to an $m\times m$
%orthogonal matrix. However,
%$\tilde{\vc{U}}\vc{\Omega}_{\tilde{\vc{U}}}[\vc{\Delta}]$ is uniquely
%defined, and is independent of the choice of $\tilde{\vc{U}}$ so that
%everything is well defined. In passing we note that this result
%extends to the non-square case that found in~\cite{Papadopoulo00}.

Since Lemma~\ref{lemma:diff_svd_simple} provides a closed-form
expression for the differentials, computing the value of the
divergence is now a matter of calculus.

% Let
% $\{\vc{\Delta}^{ij}\}_{(i,j)\in\idx}$ be an orthonormal basis for
% $\R^{m\times n}$. Then for any simple and full-rank matrix $\vc{X}$, we shall use the i
% \begin{equation*}
%   \div\, f(\vc{X}) = \sum_{(i,j)\in\idx} \iprod{\vc{\Delta}^{ij}}{\diff f[\vc{\Delta}^{ij}]}.
% \end{equation*}
% which can be converted into a closed-form expression.
\begin{theorem}\label{theorem:div_simple}
  Let $f$ be a matrix-valued spectral function and suppose
  $\vc{X}\in\R^{m\times n}$ is simple and has full-rank. Then
    \begin{equation}\label{eq:theorem:div_simple}
        \div\, (f(\vc{X})) = \sum_{i = 1}^{\min(m, n)} \left(f'_i(\sigma_i) + |m-n|\frac{f'_i(\sigma_i)}{\sigma_i}\right)+ 2\sum_{i\neq j,\, i, j = 1}^{\min(m, n)} \frac{\sigma_i
        f_i(\sigma_i)}{\sigma_i^2 - \sigma_j^2}.
    \end{equation}
\end{theorem}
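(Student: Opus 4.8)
The plan is to compute the divergence by evaluating $\tr(\diff f_{\vc{X}})$ directly, exploiting the fact that the divergence of a map equals the trace of its differential (viewed as a linear operator on $\R^{m\times n}$). Since $\vc{X}$ is simple and full-rank, Lemma~\ref{lemma:diff_svd} guarantees $f$ is differentiable at $\vc{X}$, and Lemma~\ref{lemma:diff_svd_simple} supplies closed-form expressions for $\diff\vc{U}$, $\diff\vc{V}$, $\diff\vc{\Sigma}$ via the matrices $\vc{\Omega}_{\tilde{\vc{U}}}$ and $\vc{\Omega}_{\vc{V}}$. First I would substitute these into the product-rule expansion~\eqref{eq:diff_product_g_rule}, obtaining
\[
\diff f[\vc{\Delta}] = \vc{U}\vc{\Omega}_{\vc{U}}[\vc{\Delta}]\, f(\vc{\Sigma})\vc{V}^* + (\vc{Q}\vc{Q}^*)\vc{\Delta}\vc{V}\vc{\Sigma}^{-1} f(\vc{\Sigma})\vc{V}^* + \vc{U}\, \diff f_{\vc{\Sigma}}[\diff\vc{\Sigma}[\vc{\Delta}]]\,\vc{V}^* - \vc{U}\, f(\vc{\Sigma})\vc{\Omega}_{\vc{V}}[\vc{\Delta}]\vc{V}^*,
\]
using $\diff\vc{V}[\vc{\Delta}]^* = \vc{\Omega}_{\vc{V}}[\vc{\Delta}]^{*\,*}\vc{V}^* $... more carefully, $\diff\vc{V}[\vc{\Delta}]^* = (\vc{V}\vc{\Omega}_{\vc{V}}[\vc{\Delta}]^*)^* = \vc{\Omega}_{\vc{V}}[\vc{\Delta}]\vc{V}^*$, so the last term is $\vc{U} f(\vc{\Sigma})\vc{\Omega}_{\vc{V}}[\vc{\Delta}]\vc{V}^*$.

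Next I would take the trace. The cleanest way to extract the divergence is to use the orthonormal basis $\{\tilde{\vc{U}}\vc{E}^{ij}\vc{V}^*\}$ of $\R^{m\times n}$ — i.e.\ write $\div f(\vc{X}) = \sum_{i,j} \iprod{\tilde{\vc{U}}\vc{E}^{ij}\vc{V}^*}{\diff f[\tilde{\vc{U}}\vc{E}^{ij}\vc{V}^*]}$ — so that one only ever needs $(\tilde{\vc{U}}^*\vc{\Delta}\vc{V})_{k\ell}$, which for $\vc{\Delta} = \tilde{\vc{U}}\vc{E}^{ij}\vc{V}^*$ is simply $\ind\{k=i,\ell=j\}$. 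Plugging this into the closed-form~\eqref{eq:omega_entries} for $\vc{\Omega}_{\tilde{\vc{U}}}$ and $\vc{\Omega}_{\vc{V}}$, and into~\eqref{eq:diff_svalues} and~\eqref{eq:omega_eqns_nonsquare}, reduces everything to bookkeeping. Three groups of terms arise: (i) the ``diagonal'' contribution $i=j\le n$ from the middle term $\vc{U}\,\diff f_{\vc{\Sigma}}[\diff\vc{\Sigma}]\,\vc{V}^*$, which since $f$ acts on $\vc{\Sigma}$ coordinatewise contributes $\sum_{i=1}^n f_i'(\sigma_i)$; (ii) the ``rectangular'' contribution from $n<i\le m$, $j\le n$ coming from the $(\vc{Q}\vc{Q}^*)\vc{\Delta}\vc{V}\vc{\Sigma}^{-1}f(\vc{\Sigma})\vc{V}^*$ term, where $\vc{\Omega}_{\vc{Q}}[\vc{\Delta}]_{ij} = \sigma_j^{-1}(\tilde{\vc{U}}^*\vc{\Delta}\vc{V})_{ij}$ gives a factor $f_j(\sigma_j)/\sigma_j$, summed over the $m-n$ extra rows to yield $|m-n|\sum_j f_j'(\sigma_j)/\sigma_j$ — wait, one must be careful: this term produces $f_j(\sigma_j)/\sigma_j$, not $f_j'(\sigma_j)/\sigma_j$, so reconciling it with~\eqref{eq:theorem:div_simple} requires noting that in the SVT case $f_i(\sigma) = (\sigma-\lambda)_+$ satisfies $f_i(\sigma)/\sigma = (1-\lambda/\sigma)_+$ while $f_i'(\sigma)/\sigma \cdot \sigma = f_i'(\sigma) = \ind\{\sigma>\lambda\}$ — actually the stated general formula has $f_i'(\sigma_i)/\sigma_i$, so I must track whether the rectangular block genuinely contributes $f_i(\sigma_i)/\sigma_i$ and the final formula as written is specialized, or whether a cancellation with an off-diagonal piece converts $f_i(\sigma_i)$ to $f_i'(\sigma_i)$; and (iii) the off-diagonal $i\neq j$, both $\le n$, contributions from the first and last terms involving $\vc{\Omega}_{\vc{U}}$ and $\vc{\Omega}_{\vc{V}}$, which after using~\eqref{eq:omega_entries} combine to $2\sum_{i\neq j}\sigma_i f_i(\sigma_i)/(\sigma_i^2-\sigma_j^2)$.

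The main obstacle will be (ii): correctly disentangling the rectangular-block bookkeeping, in particular getting the coefficient $|m-n|$ and the correct functional form ($f_i'(\sigma_i)/\sigma_i$ versus $f_i(\sigma_i)/\sigma_i$ — these agree for SVT only in the regime $\sigma_i>\lambda$, where $f_i'=1$, and the $(1-\lambda/\sigma_i)_+$ term in~\eqref{eq:lemma:svt_div:svt_div} matches $f_i(\sigma_i)/\sigma_i$, so in fact the general formula as displayed may carry an implicit identification or the reader should read $f_i'(\sigma_i)/\sigma_i$ as a typographic stand-in). I would resolve this by writing the rectangular block's contribution to $\diff f[\vc{\Delta}]$ explicitly, noting $\diff\vc{U}[\vc{\Delta}] = \vc{U}\vc{\Omega}_{\vc{U}}[\vc{\Delta}] + (\vc{Q}\vc{Q}^*)\vc{\Delta}\vc{V}\vc{\Sigma}^{-1}$, so that the first term of~\eqref{eq:diff_product_g_rule} splits as $\vc{U}\vc{\Omega}_{\vc{U}}[\vc{\Delta}]f(\vc{\Sigma})\vc{V}^* + (\vc{Q}\vc{Q}^*)\vc{\Delta}\vc{V}\vc{\Sigma}^{-1}f(\vc{\Sigma})\vc{V}^*$; the trace of the second piece against the rectangular basis vectors $\tilde{\vc{U}}\vc{E}^{ij}\vc{V}^*$, $n<i\le m$, is $\iprod{\vc{Q}\vc{E}^{ij}... }{(\vc{Q}\vc{Q}^*)\vc{Q}\vc{E}^{ij}\vc{V}^*\vc{V}\vc{\Sigma}^{-1}f(\vc{\Sigma})\vc{V}^*}$ which evaluates to $\sigma_j^{-1}f_j(\sigma_j)$, summed over the $|m-n|$ values of $i$ and then over $j$. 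Checking this against the SVT specialization in Corollary~\ref{theorem:svt_div} confirms the coefficient and functional form, completing the argument. The remaining off-diagonal and diagonal computations in groups (i) and (iii) are routine substitutions, so I would present them compactly, citing~\eqref{eq:omega_entries} and~\eqref{eq:diff_svalues} and performing the two-by-two algebra inline.
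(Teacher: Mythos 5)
Your proposal follows essentially the same route as the paper: the divergence is computed as the trace of $\diff f$ in the orthonormal basis $\vc{\Delta}^{ij}=\tilde{\vc{u}}_i\vc{v}_j^*$, the closed forms \eqref{eq:omega_entries}, \eqref{eq:diff_svalues} and \eqref{eq:omega_eqns_nonsquare} are substituted, and the sum splits into the same three pieces (diagonal, rectangular, and off-diagonal). Your worry in step (ii) is well founded but resolves in your favor: the rectangular block genuinely contributes $|m-n|\sum_i f_i(\sigma_i)/\sigma_i$ --- this is exactly what the paper's own evaluation of $S^{\tilde{\vc{U}}}$ produces, and it is what the SVT specialization $(1-\lambda/\sigma_i)_+=f_i(\sigma_i)/\sigma_i$ in Corollary~\ref{theorem:svt_div} and the complex analogue in Theorem~\ref{theorem:div_simple_cpx} require --- so the $f_i'(\sigma_i)/\sigma_i$ in the displayed statement \eqref{eq:theorem:div_simple} is a typo for $f_i(\sigma_i)/\sigma_i$, not a cancellation you need to hunt down.
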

\begin{proof}
  Assume $m \geq n$ without loss of generality. Let
  $\{\tilde{\vc{u}}_i\}_{i = 1}^{m}$ and $\{\vc{v}_i\}_{i =1}^{n}$
  denote the columns of $\tilde{\vc{U}}$ and $\vc{V}$, and set
    \begin{equation*}
        \vc{\Delta}^{ij} = \tilde{\vc{u}}_i \vc{v}_j^*, \qquad (i,j) \in \idx.
    \end{equation*}
    Then $\{\vc{\Delta}^{ij}\}_{(i,j) \in \idx}$ is an orthonormal
    basis for $\R^{m\times n}$ and, furthermore, $\tilde{\vc{U}}^*
    \vc{\Delta}^{ij} \vc{V} = \vc{E}^{ij}$. Since
    $\{\vc{\Delta}^{ij}\}_{(i,j)\in\idx}$ is an orthonormal basis,
\begin{equation}
\label{eq:div-delta}
  \div\, (f(\vc{X})) = \sum_{(i,j)\in\idx} \iprod{\vc{\Delta}^{ij}}{\diff f[\vc{\Delta}^{ij}]}.
\end{equation}
Note that~\eqref{eq:diff_product_g_rule} yields
    \begin{equation*}
        \diff f[\vc{\Delta}^{ij}] = \tilde{\vc{U}}^*(\vc{\Omega}_{\tilde{\vc{U}}}[\vc{\Delta}^{ij}]f(\vc{\Sigma}) + \id_{m\times n}\diff(f\circ\vc{\Sigma})[\vc{\Delta}^{ij}] +
        \id_{m\times n}
        f(\vc{\Sigma})\vc{\Omega}_\vc{V}[\vc{\Delta}^{ij}]) \vc{V}^*.
    \end{equation*}
    Next, from~\eqref{eq:diff_svalues} we obtain
    \begin{equation*}
      \diff\sigma_k[\vc{\Delta}^{ij}] = \delta_{ki}\delta_{kj}, \qquad (i,j)\in\idx,\quad k = 1,\ldots,n.
    \end{equation*}
    By the same arguments, from~\eqref{eq:omega_entries} we
    obtain
    \begin{equation*}
    	\begin{bmatrix}\Omega_{\tilde{\vc{U}}, kl}[\vc{\Delta}^{ij}] \\ \Omega_{\vc{V}, kl}[\vc{\Delta}^{ij}]\end{bmatrix} = -\frac{1}{\sigma_{k}^2 - \sigma_{l}^2}\begin{bmatrix}\sigma_{l} &
    \sigma_{k} \\ -\sigma_{k} & -\sigma_{l}\end{bmatrix}\begin{bmatrix} \delta_{ik}\delta_{jl} \\  \delta_{il}\delta_{jk}\end{bmatrix},\qquad (i,j)\in\idx,\quad k, l = 1,\ldots, n,\quad k\neq l,
    \end{equation*}
    and from~\eqref{eq:omega_eqns_nonsquare},
    \begin{equation*}
      \Omega_{\tilde{\vc{U}}, kl}[\vc{\Delta}^{ij}] = \frac{\delta_{ik}\delta_{jl}}{\sigma_{l}},\qquad (i,j)\in\idx,\quad k = n+1,\ldots, m,\quad l = 1,\ldots, n.
    \end{equation*}

    Now follow \eqref{eq:div-delta} and decompose the divergence as
    \begin{eqnarray*}
        \div\, (f(\vc{X})) & = & \sum_{(i,j)\in\idx}\iprod{\vc{E}^{ij}}{\vc{\Omega}_{\tilde{\vc{U}}}[\vc{\Delta}^{ij}]f(\vc{\Sigma}) + \id_{m\times n}\diff(f\circ
        \vc{\Sigma})[\vc{\Delta}^{ij}] + \id_{m\times n}f(\vc{\Sigma})\vc{\Omega}_\vc{V}[\vc{\Delta}^{ij}]} \\
            & = & S^{\tilde{\vc{U}}} + S^{\vc{\Sigma}} + S^{\vc{V}}.
    \end{eqnarray*}
    We see that $S^{\vc{V}}$ is equal to
    \begin{equation*}
        S^{\vc{V}} = \sum_{i\neq j,\, i,j = 1}^{n} f_{ij}(\vc{\Sigma})\Omega_{\vc{V}, ij}[\vc{\Delta}^{ij}]
            = \sum_{i\neq j,\, i,j = 1}^{n} \frac{f_i(\sigma_i)}{\sigma_i^2 - \sigma_j^2}(\sigma_i \delta_{ii}\delta_{jj} + \sigma_j \delta_{ij}\delta_{ji})
            = \sum_{i\neq j,\, i,j = 1}^{n} \frac{\sigma_i f_i(\sigma_i)}{\sigma_i^2 - \sigma_j^2},
    \end{equation*}
    whereas
    \begin{equation*}
        S^{\vc{\Sigma}} = \sum_{i,j = 1}^{n} \diff f_{ij}[\diff\Sigma_{ij}[\vc{\Delta}^{ij}]]
            = \sum_{i,j = 1}^{n}  \delta_{ij} f_i'(\sigma_i)
            = \sum_{i = 1}^{n} f_i'(\sigma_i).
    \end{equation*}
    Finally, $S^{\tilde{\vc{U}}}$ decomposes as
    \begin{equation*}
        S^{\tilde{\vc{U}}} = \sum_{i,j = 1}^{n}\iprod{\vc{E}^{ij}}{\vc{\Omega}_{\tilde{\vc{U}}}[\vc{\Delta}^{ij}]f(\vc{\Sigma})} + \sum_{i = n+1}^{m}\sum_{j =
        1}^{n}\iprod{\vc{E}^{ij}}{\vc{\Omega}_{\tilde{\vc{U}}}[\vc{\Delta}^{ij}]f(\vc{\Sigma})}.
    \end{equation*}
    Using~\eqref{eq:omega_entries}, the first term is equal to
    \begin{equation*}
        \sum_{i,j = 1}^{n}\iprod{\vc{E}^{ij}}{\vc{\Omega}_{\tilde{\vc{U}}}[\vc{\Delta}^{ij}]f(\vc{\Sigma})} = -\sum_{i\neq j,\, i,j = 1}^{n} \frac{f_j(\sigma_j)}{\sigma_i^2 -
        \sigma_j^2}(\sigma_j \delta_{ii}\delta_{jj} + \sigma_i \delta_{ij}\delta_{ji})
            = \sum_{i\neq j,\, i,j = 1}^{n} \frac{\sigma_i f_i(\sigma_i)}{\sigma_i^2 - \sigma_j^2},
    \end{equation*}
    while it follows from~\eqref{eq:omega_eqns_nonsquare} that the
    second equals
    \begin{equation*}
        \sum_{i = n+1}^{m}\sum_{j = 1}^{n}\iprod{\vc{E}^{ij}}{\vc{\Omega}_{\tilde{\vc{U}}}[\vc{\Delta}^{ij}]f(\vc{\Sigma})}
            = \sum_{i = 1}^{m - n}\sum_{j = 1}^{n}\iprod{\vc{E}^{ij}}{\vc{E}^{ij}\vc{\Sigma}^{-1}f(\vc{\Sigma})}
            = (m - n) \sum_{i = 1}^{n} \frac{f_i(\sigma_i)}{\sigma_i}.
    \end{equation*}
    Since $n = \min(m, n)$, we conclude that
     \begin{equation*}
        \div\, (f(\vc{X})) = S^{\tilde{\vc{U}}} + S^{\vc{\Sigma}} + S^{\vc{V}} = \sum_{i = 1}^{\min(m, n)} \left(f'_i(\sigma_i) + |m-n|\frac{f'_i(\sigma_i)}{\sigma_i}\right)+ 2\sum_{i\neq
        j,\, i, j = 1}^{\min(m, n)} \frac{\sigma_i f_i(\sigma_i)}{\sigma_i^2 - \sigma_j^2}.
    \end{equation*}
\end{proof}

\subsection{The complex case}

Since the singular values are not analytic functions of the entries,
we can only consider the derivatives of $f$ as a function of the real
and imaginary parts of the entries of $\vc{X}$. Therefore, we identify
$\C^{m\times n}$ with $\R^{m\times n}\times\R^{m\times n}$ and $f$
with a complex-valued function of $2mn$ variables. This has the
particular consequence that the differential $\text{d} f$ at $\vc{X}
\in \C^{m \times n}$---whenever it exists---obeys
\begin{equation*}
  \diff f_\vc{X}[\vc{\Delta}] = \diff f_\vc{X}[\real(\vc{\Delta})] + \diff f_\vc{X}[\1i \imag(\vc{\Delta})],
\end{equation*}
for all $\vc{\Delta}\in\C^{m\times n}$; here, $\1i$ is the imaginary
unit. The differential % is not homogeneous with respect to $\1i$,
seen as a function with domain $\R^{m\times n}\times\R^{m\times n}$ is
of course linear. Lemma~\ref{lemma:diff_svd_simple} may be adapted
to the complex setting with minor modifications.
%This is the interpretation we give
%to~(\ref{eq:diff_product_g_rule}) in this case. \ejc{I am not sure
%what this means.}\csl{I just omitted the comment here}
\begin{lemma}\label{lemma:diff_svd_simple_cpx}
%   \ejc{I would change this as in Lemma 4.2.}
    Let $\vc{X}$ be simple and full-rank. Then the differentials of
    $\vc{U}$ and $\vc{V}$ are given by
    \[
        \diff\vc{U}[\vc{\Delta}] = \tilde{\vc{U}}
        \vc{\Omega}_{\tilde{\vc{U}}}[\vc{\Delta}], \qquad
        \diff\vc{V}[\vc{\Delta}] = \vc{V} \vc{\Omega}_\vc{V}[\vc{\Delta}]^*,
    \]
    where $\vc{\Omega}_{\tilde{\vc{U}}}$ and $\vc{\Omega}_\vc{V}$ are
    given in closed-form \eqref{eq:omega_entries_cpx} ($\diff\vc{U}$ is
    independent of the choice of $\tilde{\vc{U}}$). Further,
    $\diff\vc{\Sigma}$ is given by \eqref{eq:diff_svalues_cpx}.
%    Let $\vc{X}$ be simple and full-rank. Then there exist linear
%    mappings $\vc{\Omega}_\vc{V}$, $\diff\vc{\Sigma}$ and
%    $\vc{\Omega}_{\tilde{\vc{U}}}$ given in closed-form as a function of the SVD of $\vc{X}$ and obeying
%	\begin{equation}\label{eq:lemma:diff_svd_simple_cpx}
%          \tilde{\vc{U}}^* \diff\vc{X}[\vc{\Delta}] \vc{V} = \vc{\Omega}_{\tilde{\vc{U}}}[\vc{\Delta}]\vc{\Sigma} + \diff\vc{\Sigma}[\vc{\Delta}] +
%          \vc{\Sigma}
%          \vc{\Omega}_\vc{V}[\vc{\Delta}] \quad \text{for all }  \vc{\Delta}\in\C^{m\times n},
%    \end{equation}
%    such that $\diff\vc{U}[\vc{\Delta}] = \tilde{\vc{U}}
%    \vc{\Omega}_{\tilde{\vc{U}}}[\vc{\Delta}]$, and
%    $\diff\vc{V}[\vc{\Delta}] = \vc{V}
%    \vc{\Omega}_\vc{V}[\vc{\Delta}]^*$. Furthermore,
%    $\diff\vc{U}[\vc{\Delta}]$ is independent of the choice of
%    $\tilde{\vc{U}}$.
    For completeness, $\vc{\Omega}_{\tilde{\vc{U}}},
    \vc{\Omega}_\vc{V}: \C^{m\times n}\mapsto \C^{m\times n}$ while
    $\diff\vc{\Sigma}:\C^{m\times n}\mapsto \R^{m\times n}$.
    %\ejc{Same issues as in the real case.}
\end{lemma}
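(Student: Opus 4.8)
The plan is to rerun the proof of Lemma~\ref{lemma:diff_svd_simple} over $\C$, tracking the places where skew-symmetry must be replaced by skew-Hermitian symmetry. Since $\vc{X}$ is simple and has full rank, Lemma~\ref{lemma:diff_svd} already guarantees that $\vc{U}$, $\vc{V}$ and $\vc{\Sigma}$ are differentiable near $\vc{X}$, so the task is only to identify these differentials in closed form. First I would differentiate the reduced SVD $\vc{X}=\vc{U}\vc{\Sigma}\vc{V}^*$, multiply by $\tilde{\vc{U}}^*$ on the left and by $\vc{V}$ on the right, and use $\diff\vc{X}[\vc{\Delta}]=\vc{\Delta}$ to recover verbatim the identity \eqref{eq:diff_product_svd_omega}, now with $\vc{\Omega}_{\tilde{\vc{U}}}[\vc{\Delta}]=\tilde{\vc{U}}^*\diff\vc{U}[\vc{\Delta}]$ and $\vc{\Omega}_\vc{V}[\vc{\Delta}]=\diff\vc{V}[\vc{\Delta}]^*\vc{V}$.

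The first new ingredient is structural: differentiating $\vc{U}^*\vc{U}=\id_{n\times n}$ and $\vc{V}^*\vc{V}=\id_{n\times n}$ now shows that the upper $n\times n$ block of $\vc{\Omega}_{\tilde{\vc{U}}}[\vc{\Delta}]$ and the matrix $\vc{\Omega}_\vc{V}[\vc{\Delta}]$ are \emph{skew-Hermitian} rather than skew-symmetric, so their diagonal entries are purely imaginary instead of zero, while off the diagonal $\Omega_{\tilde{\vc{U}},ji}=-\conj{\Omega_{\tilde{\vc{U}},ij}}$ and $\Omega_{\vc{V},ji}=-\conj{\Omega_{\vc{V},ij}}$. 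Reading off the $(i,j)$ and $(j,i)$ entries of \eqref{eq:diff_product_svd_omega} for $i\neq j$, $i,j\le n$, and conjugating the second one, I would obtain the $2\times 2$ complex system
\[
\begin{bmatrix}\sigma_{j} & \sigma_{i} \\ -\sigma_{i} & -\sigma_{j}\end{bmatrix}\begin{bmatrix}\Omega_{\tilde{\vc{U}},ij}[\vc{\Delta}] \\ \Omega_{\vc{V},ij}[\vc{\Delta}]\end{bmatrix} = \begin{bmatrix}(\tilde{\vc{U}}^*\vc{\Delta}\vc{V})_{ij} \\ \conj{(\tilde{\vc{U}}^*\vc{\Delta}\vc{V})_{ji}}\end{bmatrix},
\]
whose coefficient matrix has determinant $\sigma_i^2-\sigma_j^2\neq 0$ by simplicity; inverting it produces the off-diagonal entries of $\vc{\Omega}_{\tilde{\vc{U}}}$ and $\vc{\Omega}_\vc{V}$, which is \eqref{eq:omega_entries_cpx}. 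The lower $(m-n)\times n$ block of $\vc{\Omega}_{\tilde{\vc{U}}}[\vc{\Delta}]$ is read off exactly as in \eqref{eq:omega_eqns_nonsquare} (full rank makes $\vc{\Sigma}$ invertible), and the $i$-th diagonal entry, $i\le n$, gives $\diff\sigma_i[\vc{\Delta}]=\real\bigl((\tilde{\vc{U}}^*\vc{\Delta}\vc{V})_{ii}\bigr)$ — the real part appearing precisely because $\sigma_i$ is real while $\Omega_{\tilde{\vc{U}},ii}$ and $\Omega_{\vc{V},ii}$ are purely imaginary — which is \eqref{eq:diff_svalues_cpx}.

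The step I expect to be the main obstacle is the diagonal of $\vc{\Omega}_{\tilde{\vc{U}}}$ and $\vc{\Omega}_\vc{V}$: the imaginary part of the same diagonal equation determines only the sum $\Omega_{\tilde{\vc{U}},ii}[\vc{\Delta}]+\Omega_{\vc{V},ii}[\vc{\Delta}]=\1i\,\imag\bigl((\tilde{\vc{U}}^*\vc{\Delta}\vc{V})_{ii}\bigr)/\sigma_i$, not the two entries separately. This is the phase ambiguity of the complex SVD: replacing $(\vc{U},\vc{V})$ by $(\vc{U}\vc{D},\vc{V}\vc{D})$ for a diagonal unitary $\vc{D}$ leaves $\vc{X}$ unchanged and — what matters downstream — leaves every spectral function $f(\vc{X})=\vc{U}f(\vc{\Sigma})\vc{V}^*$ unchanged as well, since $\vc{D}$ commutes with the diagonal $f(\vc{\Sigma})$. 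I would resolve it by fixing a normalization of the diagonals (for instance, requiring $\vc{\Omega}_\vc{V}[\vc{\Delta}]$ to have vanishing diagonal, which then pins $\Omega_{\tilde{\vc{U}},ii}[\vc{\Delta}]$), recording this choice as part of \eqref{eq:omega_entries_cpx}, and noting that the differential $\diff f$ built from \eqref{eq:diff_product_g_rule} is independent of the normalization, so the divergence computation in the complex analogue of Theorem~\ref{theorem:div_simple} is unaffected.

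Finally, the argument that $\diff\vc{U}[\vc{\Delta}]=\tilde{\vc{U}}\vc{\Omega}_{\tilde{\vc{U}}}[\vc{\Delta}]$ does not depend on the completion $\tilde{\vc{U}}$ transfers verbatim from Lemma~\ref{lemma:diff_svd_simple}: writing $\tilde{\vc{U}}=\begin{bmatrix}\vc{U} & \vc{Q}\end{bmatrix}$, the lower block of $\vc{\Omega}_{\tilde{\vc{U}}}[\vc{\Delta}]$ equals $\vc{Q}^*\vc{\Delta}\vc{V}\vc{\Sigma}^{-1}$, hence $\diff\vc{U}[\vc{\Delta}]=\vc{U}\vc{\Omega}_{\vc{U}}[\vc{\Delta}]+(\vc{Q}\vc{Q}^*)\vc{\Delta}\vc{V}\vc{\Sigma}^{-1}$, and $\vc{Q}\vc{Q}^*$ is the choice-independent orthogonal projector onto the orthogonal complement of the column space of $\vc{U}$; one then checks that the resulting closed forms indeed satisfy \eqref{eq:diff_product_svd}, which is a routine verification.
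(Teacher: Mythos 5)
Your proposal is correct and follows essentially the same route as the paper: differentiate the SVD identity, exploit skew-Hermitian (rather than skew-symmetric) structure of $\vc{\Omega}_{\tilde{\vc{U}}}$ and $\vc{\Omega}_\vc{V}$, solve the resulting $2\times 2$ systems off the diagonal, read off the lower block and the real part of the diagonal, and transfer the $\tilde{\vc{U}}$-independence argument verbatim. The one point where you are more explicit than the paper --- the diagonal equation only pins down $\Omega_{\tilde{\vc{U}},ii}+\Omega_{\vc{V},ii}$, which you resolve by a normalization and by observing that spectral functions are invariant under the phase ambiguity --- is exactly the residual degree of freedom the paper relegates to a footnote, and your treatment of it is sound.
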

\begin{proof}
    Following the same steps as in the proof of
    Lemma~\ref{lemma:diff_svd_simple}, let $\vc{X}\in\C^{m\times n}$ ($m \geq
    n$). We see that~\eqref{eq:diff_product_svd_omega} becomes
    \begin{equation}\label{eq:diff_product_svd_omega_cpx}
        \tilde{\vc{U}}^*\vc{\Delta} \vc{V} = \vc{\Omega}_{\tilde{\vc{U}}}[\vc{\Delta}]\vc{\Sigma} + \id_{m\times n}\diff\vc{\Sigma}[\vc{\Delta}] + \id_{m\times n}\vc{\Sigma}
        \vc{\Omega}_\vc{V}[\vc{\Delta}],
    \end{equation}
    where $\vc{\Omega}_\vc{V}$ and the upper $n\times n$ block of
    $\vc{\Omega}_{\tilde{\vc{U}}}$ are now skew-Hermitian. Since
    these matrices have purely imaginary entries on the diagonal,
    \begin{equation}\label{eq:diff_svalues_cpx}
    	\diff\sigma_i[\vc{\Delta}] = \real\{(\tilde{\vc{U}}^* \vc{\Delta} \vc{V})_{ii}\} = \frac{(\tilde{\vc{U}}^* \vc{\Delta} \vc{V})_{ii} + \conj{(\tilde{\vc{U}}^* \vc{\Delta}
    \vc{V})_{ii}}}{2},\qquad i = 1,\ldots, n,
    \end{equation}
    and
    \begin{equation}\label{eq:omega_entries_diag_cpx}
        \Omega_{\tilde{\vc{U}}, ii}[\vc{\Delta}]\sigma_i + \Omega_{\vc{V}, ii}[\vc{\Delta}]\sigma_i = \imag\{(\tilde{\vc{U}}^* \vc{\Delta} \vc{V})_{ii}\}\1i = \frac{(\tilde{\vc{U}}^* \vc{\Delta}
        \vc{V})_{ii} - \conj{(\tilde{\vc{U}}^* \vc{\Delta} \vc{V})_{ii}}}{2},\qquad i = 1,\ldots, n.
    \end{equation}
    Furthermore, it is not hard to see that the
    system~\eqref{eq:omega_entries} becomes
    \begin{equation}\label{eq:omega_entries_cpx}
    	\begin{bmatrix}\Omega_{\tilde{\vc{U}}, ij}[\vc{\Delta}] \\ \Omega_{\vc{V}, ij}[\vc{\Delta}]\end{bmatrix} = -\frac{1}{\sigma_{i}^2 - \sigma_{j}^2}\begin{bmatrix}\sigma_{j} & \sigma_{i} \\
    -\sigma_{i} & -\sigma_{j}\end{bmatrix}\begin{bmatrix} (\vc{U}^*\vc{\Delta} \vc{V})_{ij} \\ \conj{(\vc{U}^*\vc{\Delta} \vc{V})_{ji}}\end{bmatrix},\qquad i\neq j,\quad i,j = 1,\ldots,n,
    \end{equation}
    and the expression equivalent to~\eqref{eq:omega_eqns_nonsquare}
    is
    \begin{equation}\label{eq:omega_eqns_nonsquare_cpx}
      (\tilde{\vc{U}}^T\vc{\Delta} \vc{V})_{ij} = \Omega_{\tilde{\vc{U}}, ij}[\vc{\Delta}]\sigma_{j},\qquad i = n + 1,\ldots, m,\quad  j = 1,\ldots, n.
    \end{equation}
    As in the real case, this shows $\diff\vc{\Sigma}[\vc{\Delta}]$,
    $\vc{\Omega}_{\vc{V}}[\vc{\Delta}]$ and
    $\vc{\Omega}_{\vc{U}}[\vc{\Delta}]$ act linearly on $\vc{\Delta}$,
    and that they are uniquely characterized\footnote{The attentive
      reader will notice that~\eqref{eq:omega_entries_diag_cpx} leaves
      us with one degree of freedom, and thus does not determine {\it
        exactly} the differentials. This is not a problem, and it is a
      consequence of the freedom of choice of a phase factor for the
      singular vectors.} by the SVD of $\vc{X}$. Furthermore, the same
    argument allows us to show that the differential of $\vc{U}$ is
    independent of the choice of $\tilde{\vc{U}}$, concluding the proof.
%
%    Therefore, we have determined $\diff\vc{\Sigma}[\vc{\Delta}]$, and
%    both $\vc{\Omega}_\vc{V}[\vc{\Delta}]$ and
%    $\vc{\Omega}_{\tilde{\vc{U}}}[\vc{\Delta}]$ up
%    to~(\ref{eq:omega_entries_diag_cpx}) \ejc{I don't know what this
%      means}\csc{I changed the wording.} as linear mappings which do not depend on
%    $\vc{\Delta}$. Hence, they
%    satisfy~(\ref{eq:lemma:diff_svd_simple}), as desired.
\end{proof}

As in the real case, we restrict attention to a simple and full-rank
$\vc{X}\in\C^{m\times n}$. We need to determine two terms for the
divergence as discussed in Section~\ref{section:sure_for_svt:cpx}. Now
if $\{\vc{\Delta}^{ij}\}_{(i,j)\in\idx}$ is an orthonormal basis for
$\R^{m\times n}$, then $\{\1i\vc{\Delta}^{ij}\}_{(i,j)\in\idx}$ is an
orthonormal basis for $\1i\R^{m\times n}$. With the divergence as
in~\eqref{eq:def_div_cpx},
\begin{eqnarray*}
    \div\, (f(\vc{X})) & = & \sum_{(i,j)\in\idx} \real(\iprod{\vc{\Delta}^{ij}}{\diff f[\vc{\Delta}^{ij}]}) + \sum_{(i,j)\in\idx} \imag(\iprod{\vc{\Delta}^{ij}}{\diff f[\1i\vc{\Delta}^{ij}]}) \\
        & = & \real\left(\sum_{(i,j)\in\idx} \iprod{\vc{\Delta}^{ij}}{\diff f[\vc{\Delta}^{ij}] - \1i \diff f[\1i\vc{\Delta}^{ij}]} \right).
\end{eqnarray*}
The analogue of Theorem~\ref{theorem:div_simple} is this:
\begin{theorem}\label{theorem:div_simple_cpx}
 Let $f$ be a matrix-valued spectral function and suppose
  $\vc{X}\in\C^{m\times n}$ is simple and has full-rank. Then
    \begin{equation}\label{eq:theorem:div_simple_cpx}
        \div\, (f(\vc{X})) = \sum_{i = 1}^{\min(m, n)}\left(f_i'(\sigma_i) + (2|m-n|+1)\frac{f_i(\sigma_i)}{\sigma_i}\right) + 4\sum_{i\neq j,\, i,j = 1}^{\min(m, n)}\frac{\sigma_i
        f_i(\sigma_i)}{\sigma_i^2 - \sigma_j^2}.
    \end{equation}
\end{theorem}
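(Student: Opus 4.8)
The plan is to follow the proof of Theorem~\ref{theorem:div_simple} line by line, replacing the real differentials by the complex ones from Lemma~\ref{lemma:diff_svd_simple_cpx} and using the complex divergence identity recorded just above, $\div(f(\vc X)) = \real\bigl(\sum_{(i,j)\in\idx}\iprod{\vc\Delta^{ij}}{\diff f[\vc\Delta^{ij}] - \1i\,\diff f[\1i\vc\Delta^{ij}]}\bigr)$. Assume $m\ge n$ without loss of generality and take $\vc\Delta^{ij} = \tilde{\vc u}_i\vc v_j^*$, so that $\tilde{\vc U}^*\vc\Delta^{ij}\vc V = \vc E^{ij}$ and $\tilde{\vc U}^*(\1i\vc\Delta^{ij})\vc V = \1i\vc E^{ij}$. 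By the product rule~\eqref{eq:diff_product_g_rule} and unitary invariance of the inner product, each term $\iprod{\vc\Delta^{ij}}{\diff f[\vc\Delta]}$ equals $\iprod{\vc E^{ij}}{\vc\Omega_{\tilde{\vc U}}[\vc\Delta]f(\vc\Sigma) + \id_{m\times n}\diff(f\circ\vc\Sigma)[\vc\Delta] + \id_{m\times n}f(\vc\Sigma)\vc\Omega_{\vc V}[\vc\Delta]}$, so the divergence splits, exactly as before, as $S^{\tilde{\vc U}} + S^{\vc\Sigma} + S^{\vc V}$, each piece now being the real part of the value at $\vc\Delta^{ij}$ minus $\1i$ times the value at $\1i\vc\Delta^{ij}$.

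Next I would substitute the closed forms~\eqref{eq:diff_svalues_cpx}, \eqref{eq:omega_entries_diag_cpx}, \eqref{eq:omega_entries_cpx} and~\eqref{eq:omega_eqns_nonsquare_cpx}. The key observation is that in every off-diagonal/off-square expression only two kinds of terms appear: ones carrying $(\vc U^*\vc\Delta\vc V)_{ij}$ and ones carrying $\conj{(\vc U^*\vc\Delta\vc V)_{ji}}$; in the combination $A[\vc\Delta^{ij}] - \1i\,A[\1i\vc\Delta^{ij}]$ the conjugate terms cancel while the direct terms double. This immediately shows $S^{\vc\Sigma} = \sum_i f_i'(\sigma_i)$ (unchanged from the real case, since~\eqref{eq:diff_svalues_cpx} has no conjugate part on the diagonal entry $\diff\sigma_i[\vc\Delta^{ii}]$); the non-square rows of $\tilde{\vc U}$, via~\eqref{eq:omega_eqns_nonsquare_cpx}, contribute $2(m-n)\sum_i f_i(\sigma_i)/\sigma_i$; and the square off-diagonal parts of $S^{\tilde{\vc U}}$ and $S^{\vc V}$, via~\eqref{eq:omega_entries_cpx}, contribute $2\sum_{i\ne j}\sigma_i f_i(\sigma_i)/(\sigma_i^2-\sigma_j^2)$ apiece, i.e.\ $4\sum_{i\ne j}\sigma_i f_i(\sigma_i)/(\sigma_i^2-\sigma_j^2)$ together --- in each case exactly doubling the corresponding real-case contribution.

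The one genuinely new feature, and the origin of the extra $+1$ in $2|m-n|+1$, is the diagonal of $\vc\Omega_{\tilde{\vc U}}$ and $\vc\Omega_{\vc V}$, which vanishes in the real case but not here. Equation~\eqref{eq:omega_entries_diag_cpx} pins down only the sum, $(\Omega_{\tilde{\vc U},ii} + \Omega_{\vc V,ii})[\vc\Delta]\,\sigma_i = \imag\{(\tilde{\vc U}^*\vc\Delta\vc V)_{ii}\}\1i$; evaluating at $\vc\Delta^{ii}$ gives $0$ and at $\1i\vc\Delta^{ii}$ gives $\1i/\sigma_i$, so the quantity $(\Omega_{\tilde{\vc U},ii}+\Omega_{\vc V,ii})[\vc\Delta^{ii}] - \1i\,(\Omega_{\tilde{\vc U},ii}+\Omega_{\vc V,ii})[\1i\vc\Delta^{ii}]$ that enters the divergence equals $1/\sigma_i$; multiplied by $f(\vc\Sigma)$ this makes the diagonal part of $S^{\tilde{\vc U}}+S^{\vc V}$ equal to $\sum_i f_i(\sigma_i)/\sigma_i$. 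Although~\eqref{eq:omega_entries_diag_cpx} leaves a phase degree of freedom in $\Omega_{\tilde{\vc U},ii}$ and $\Omega_{\vc V,ii}$ separately (the footnote to Lemma~\ref{lemma:diff_svd_simple_cpx}), only their sum occurs, so the divergence is well defined. Collecting $\sum_i f_i'(\sigma_i)$, $\sum_i f_i(\sigma_i)/\sigma_i$, $2(m-n)\sum_i f_i(\sigma_i)/\sigma_i$, and $4\sum_{i\ne j}\sigma_i f_i(\sigma_i)/(\sigma_i^2-\sigma_j^2)$, and using $|m-n| = m-n$, gives~\eqref{eq:theorem:div_simple_cpx}.

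I expect the only real difficulty to be bookkeeping: correctly propagating the outer $\real(\cdot)$, tracking which terms carry complex conjugates so that the cancellation/doubling dichotomy is applied consistently, and isolating the new diagonal contribution. None of this is conceptually deep once Lemma~\ref{lemma:diff_svd_simple_cpx} and the real-case computation are in place.
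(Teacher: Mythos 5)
Your proposal is correct and follows essentially the same route as the paper's proof: the same decomposition of the divergence into $S^{\tilde{\vc{U}}}+S^{\vc{\Sigma}}+S^{\vc{V}}$ after the product rule and unitary invariance, the same cancellation/doubling of conjugate versus direct terms in $A[\vc{\Delta}]-\1i\,A[\1i\vc{\Delta}]$, and the same identification of the diagonal of $\vc{\Omega}_{\tilde{\vc{U}}}+\vc{\Omega}_{\vc{V}}$ (via \eqref{eq:omega_entries_diag_cpx}) as the source of the extra $+1$ in $2|m-n|+1$. The only differences are cosmetic: the paper sums over the basis $\{\vc{E}^{ij},\1i\vc{E}^{ij}\}$ and carries the bookkeeping through $\vc{L}^{ij}=\tilde{\vc{U}}^*\vc{E}^{ij}\vc{V}$ with $\sum_{(i,j)}|L^{ij}_{kl}|^2=1$, whereas you diagonalize from the start with $\vc{\Delta}^{ij}=\tilde{\vc{u}}_i\vc{v}_j^*$.
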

\begin{proof}
  Since $\vc{X}$ is simple and has full-rank,
  Lemma~\ref{lemma:diff_svd_simple_cpx} holds, and
    \begin{multline}
      \diff f[\vc{E}^{ij}] - \1i \diff f[\vc{E}^{ij}]  =
      \tilde{\vc{U}}\left[(\vc{\Omega}_{\tilde{\vc{U}}}[\vc{E}^{ij}]-\1i\vc{\Omega}_{\tilde{\vc{U}}}[\1i\vc{E}^{ij}])f(\vc{\Sigma})
        +
        \id_{m\times n}f(\vc{\Sigma})(\vc{\Omega}_\vc{V}[\vc{E}^{ij}] - \1i\vc{\Omega}_\vc{V}[\1i\vc{E}^{ij}])\right] \vc{V}^*  \\
       +  \tilde{\vc{U}}\id_{m\times
        n}\diff f_{\vc{\Sigma}}[\diff\vc{\Sigma}[\vc{E}^{ij}] -
      \1i\diff\vc{\Sigma}[\1i\vc{E}^{ij}]]\vc{V}^*.  \label{eq:df_for_complex_div}
    \end{multline}
    Put $\vc{L}^{ij} = \tilde{\vc{U}}^* \vc{E}^{ij} \vc{V}$ for
    $(i,j)\in \idx$ for short. From~\eqref{eq:diff_svalues_cpx}, we
    get
    \begin{equation*}
    	\diff\sigma_i[\vc{E}^{ij}] - \1i\diff\sigma_i[\1i\vc{E}^{ij}] = L_{kk}^{ij}
    \end{equation*}
    for $(i,j)\in\idx$ and $k = 1,\ldots, n$. By the same arguments,~\eqref{eq:omega_entries_cpx} gives
    \begin{equation*}
    	\begin{bmatrix}\Omega_{\tilde{\vc{U}}, kl}[\vc{E}^{ij}] - \1i\Omega_{\tilde{\vc{U}}, kl}[\1i \vc{E}^{ij}] \\ \Omega_{\vc{V}, kl}[\vc{E}^{ij}] - \1i \Omega_{\vc{V},
    kl}[\vc{E}^{ij}]\end{bmatrix} = -\frac{1}{\sigma_{k}^2 - \sigma_{l}^2}\begin{bmatrix}\sigma_{l} & \sigma_{k} \\
    -\sigma_{k} & -\sigma_{l}\end{bmatrix}\begin{bmatrix}2 L^{ij}_{kl} \\  0\end{bmatrix},
    \end{equation*}
    for $(i,j)\in\idx$, $k, l = 1,\ldots, n$, and $k\neq l$. Next,~\eqref{eq:omega_eqns_nonsquare_cpx} gives
    \begin{equation*}
        \Omega_{\tilde{\vc{U}}, kl}[\vc{E}^{ij}] = 2\frac{L^{ij}_{kl}}{\sigma_{l}},
    \end{equation*}
    for $(i,j)\in\idx$, $k = n+1,\ldots, m$, and $l = 1,\ldots, n$. Finally,~\eqref{eq:omega_entries_diag_cpx} implies
    \begin{equation*}
        \vc{\Omega}_{\tilde{\vc{U}},kk}[\vc{E}^{ij}] + \Omega_{\vc{V}, kk}[\vc{E}^{ij}] - \1i (\Omega_{\tilde{\vc{U}}, kk}[\1i \vc{E}^{ij}] + \Omega_{\vc{V}, kk}[\1i \vc{E}^{ij}]) =
        \frac{L^{ij}_{kk}}{\sigma_k},
    \end{equation*}
    for $k = 1,\ldots, n$. Now, set
    \begin{eqnarray*}
        S^{\tilde{\vc{U}}}  & = &   \sum_{(i,j)\in\idx} \iprod{\vc{L}^{ij}}{(\vc{\Omega}_{\tilde{\vc{U}}}[\vc{E}^{ij}]-\1i\vc{\Omega}_{\tilde{\vc{U}}}[\1i\vc{E}^{ij}])f(\vc{\Sigma})},\\
        S^{\vc{\Sigma}}     & = &   \sum_{(i,j)\in\idx} \iprod{\vc{L}^{ij}}{\id_{m\times n}f(\vc{\Sigma})(\vc{\Omega}_\vc{V}[\vc{E}^{ij}] - \1i\vc{\Omega}_\vc{V}[\1i\vc{E}^{ij}])},\\
        S^{\vc{V}}          & = &   \sum_{(i,j)\in\idx} \iprod{\vc{L}^{ij}}{\id_{m\times n}\diff f_{\vc{\Sigma}}[\diff\vc{\Sigma}[\vc{E}^{ij}] - \1i\diff\vc{\Sigma}[\1i\vc{E}^{ij}]]}.
    \end{eqnarray*}
    Then it is clear that
    \begin{eqnarray*}
      S^{\vc{V}} & = & \sum_{(i,j)\in\idx} \sum_{k\neq l,\, k,l = 1}^{n} \frac{2 f_k(\sigma_k) \sigma_k |L^{ij}_{kl}|^2}{\sigma_k^2 - \sigma_l^2} + \sum_{k =
        1}^{n}f_k(\sigma_k)\sum_{(i,j)\in\idx} \conj{L^{ij}_{kk}} (\vc{\Omega}_{\vc{V},kk}[\vc{E}^{ij}] - \1i \vc{\Omega}_{\vc{V}, kk}[\1i\vc{E}^{ij}]) \\
      & = & 2\sum_{k\neq l,\, k,l = 1}^{n} \frac{f_k(\sigma_k) \sigma_k}{\sigma_k^2 - \sigma_l^2} + \sum_{k = 1}^{n}f_k(\sigma_k)\sum_{(i,j)\in\idx} \conj{L^{ij}_{kk}}
      (\vc{\Omega}_{\vc{V},kk}[\vc{E}^{ij}] - \1i \vc{\Omega}_{\vc{V}, kk}[\1i\vc{E}^{ij}]),
    \end{eqnarray*}
    and
    \begin{equation*}
        S^{\vc{\Sigma}} = \sum_{(i,j)\in\idx} \sum_{k = 1}^{n} f_k'(\sigma_k) |L_{kk}^{ij}|^2 =  \sum_{k = 1}^{n} f_k'(\sigma_k).
    \end{equation*}
    Finally,
    \begin{align*}
      S^{\tilde{\vc{U}}} & = -\sum_{(i,j)\in\idx}\sum_{k\neq l,\, k,l
        = 1}^{n} \frac{2f_l(\sigma_l)\sigma_l
        |L_{kl}^{ij}|^2}{\sigma_k^2 - \sigma_l^2} + \sum_{k =
        1}^{n}f_k(\sigma_k)\sum_{(i,j)\in\idx} \conj{L^{ij}_{kk}} (\vc{\Omega}_{\tilde{\vc{U}},kk}[\vc{E}^{ij}] - \1i \vc{\Omega}_{\tilde{\vc{U}}, kk}[\1i\vc{E}^{ij}]) \\
      & \qquad \qquad +    \sum_{(i,j)\in\idx}\sum_{k = n+1}^{m}\sum_{l = 1}^{n} 2\frac{f_l(\sigma_l) |L^{ij}_{kl}|^2}{\sigma_{l}} \\
      & = 2\sum_{k\neq l,\, k,l = 1}^{n}
      \frac{f_k(\sigma_k)\sigma_k}{\sigma_k^2 - \sigma_l^2} + 2(m - n)
      \sum_{k = 1}^{n} \frac{f_k(\sigma_k)}{\sigma_{k}} + \sum_{k =
        1}^{n}f_k(\sigma_k)\sum_{(i,j)\in\idx} \conj{L^{ij}_{kk}}
      (\vc{\Omega}_{\tilde{\vc{U}},kk}[\vc{E}^{ij}] - \1i
      \vc{\Omega}_{\tilde{\vc{U}}, kk}[\1i\vc{E}^{ij}]).
    \end{align*}
    Since
    \begin{equation*}
        \sum_{(i,j)\in\idx} \conj{L^{ij}_{kk}} \left(\vc{\Omega}_{\tilde{\vc{U}},kk}[\vc{E}^{ij}] - \1i \vc{\Omega}_{\tilde{\vc{U}}, kk}[\1i\vc{E}^{ij}] + \vc{\Omega}_{\vc{V},kk}[\vc{E}^{ij}] -
        \1i \vc{\Omega}_{\vc{V}, kk}[\1i\vc{E}^{ij}]\right) = \sum_{(i,j)\in\idx} \frac{|L^{ij}_{kk}|^2}{\sigma_k} = \frac{1}{\sigma_k},
    \end{equation*}
    we conclude
    \begin{eqnarray*}
        \div\, (f(\vc{X})) & = & \real\left(S^{\tilde{\vc{U}}} + S^{\vc{\Sigma}} + S^{\vc{V}}\right) \\
            & = & \sum_{i = 1}^{\min(m, n)}\left(f_i'(\sigma_i) + (2|m-n|+1)\frac{f_i(\sigma_i)}{\sigma_i}\right) + 4\sum_{i\neq j,\, i,j = 1}^{\min(m, n)}\frac{\sigma_i
        f_i(\sigma_i)}{\sigma_i^2 - \sigma_j^2},
    \end{eqnarray*}
    where we used $n = \min(m, n)$.
\end{proof}

\subsection{An extension to all matrix arguments}

%In this section, we will discuss some extensions of our previous
%results.  Although model~\eqref{eq:mvn_mean_estimation} ensures that
%$\vc{Y}$ is simple and has full-rank with probability 1, it is
%entirely possible that in any given application the distribution
%deviates from the Gaussian assumption.\csl{This can be complemented
%  with Joshua's discussion of the Gaussian noise assumption in MR
%  applications.} \ejc{I am not sure what this has to do with
%  nonGaussianity since there is no known Stein's unbiased risk
%  estimate in this case.} If $\vc{Y}$ happens to be non-simple, or
%rank-deficient, it seems arbitrary to set the divergence of $f$ to
%zero. \ejc{Is there a divergence at a $\vc{Y}$, which is either
%  nonsimple or rank deficient? If not what is the point of defining
%  anything?}  In the interest of generality, we will discuss a natural
%extension of Theorem~\ref{theorem:div_simple}
%and~\ref{theorem:svt_div_cpx} \ejc{You seem to extend the results from
%  Section 3, not those from Section 4.} to handle these
%situations.

Theorems~\ref{theorem:div_simple} and~\ref{theorem:div_simple_cpx}
provide the divergence of a matrix-valued spectral function in
closed-form for simple, full-rank arguments. Here, we indicate that
these formulas have a continuous extension to all matrices.  Before
continuing, we introduce some notation.  For a given $\vc{X}$, we
denote as $\kappa \leq n$ the number of distinct singular values while
$s_i$ and $d_i$, $i = 1, \ldots, \kappa$, denote the $i$-th distinct
singular value and its multiplicity.

As previously discussed, when $\vc{\Sigma}$ is simple there is no
ambiguity as to which $f_i$ is being applied to a singular
value. However, when $\vc{\Sigma}$ is not simple, the ambiguities make
$f$ nondifferentiable unless $f_i \equiv f$ for $i = 1,\ldots, n$.
\begin{theorem}\label{theorem:div_all}
  Let $f$ be a matrix-valued spectral function, with $f_i \equiv f$
  for $i = 1,\ldots, n$, and $f(0) = 0$.  Then the divergence extends
  by continuity to any $\vc{X}\in\R^{m\times n}$, and is given by
    \begin{eqnarray}
        \div\, (f(\vc{X})) & = & \sum_{i:\, s_i > 0} \left[\left(d_i + {d_i \choose 2}\right)f'(s_i) + \left(|m-n|d_i + {d_i\choose 2}\right)\frac{f(s_i)}{s_i}\right] \nonumber\\
            & + & \sum_{i:\, s_i = 0} \left[(|m - n|+1)d_i + 2{d_i\choose 2}\right]f'(0) + 2\sum_{i\neq j,\, i, j = 1}^{\kappa} d_i d_j\frac{s_i f(s_i)}{s_i^2 - s_j^2},\label{eq:theorem:div_all}
    \end{eqnarray}
    and to any $\vc{X}\in\C^{m\times n}$,
    \begin{eqnarray}
        \div\, (f(\vc{X})) & = & \sum_{i:\, s_i > 0} \left[\left(d_i + 2{d_i \choose 2}\right)f'(s_i) + \left((2|m-n| + 1)d_i + 2{d_i\choose 2}\right)\frac{f(s_i)}{s_i}\right] \nonumber\\
            & + & \sum_{i:\, s_i = 0} \left[2(|m - n| + 1)d_i + 4{d_i\choose 2}\right]f'(0) + 4\sum_{i\neq j,\, i, j = 1}^{\kappa} d_i d_j\frac{s_i f(s_i)}{s_i^2 - s_j^2}.\label{eq:theorem:div_all_cpx}
    \end{eqnarray}
\end{theorem}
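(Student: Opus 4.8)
The plan is to obtain \eqref{eq:theorem:div_all}--\eqref{eq:theorem:div_all_cpx} by passing to the limit in the closed-form expressions of Theorems~\ref{theorem:div_simple} and~\ref{theorem:div_simple_cpx}, which are valid on the open, dense, full-measure set of simple, full-rank matrices. Since $f(\vc{X})$ is locally Lipschitz (it is the composition of the SVD with the locally Lipschitz map $f$), its weak divergence exists and agrees Lebesgue-a.e.\ with the classical divergence, which on that set is given by those two theorems; and a continuous function agreeing a.e.\ with a weak derivative is a valid representative of it. Hence it suffices to show that the right-hand side of \eqref{eq:theorem:div_simple} (resp.\ \eqref{eq:theorem:div_simple_cpx}), regarded as a function of a simple full-rank matrix, extends continuously to all of $\R^{m\times n}$ (resp.\ $\C^{m\times n}$), and to compute the extension; I will do the real case, the complex one being identical up to the bookkeeping of the constants. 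Fix $\vc{X}$ with distinct singular values $s_1 > \cdots > s_\kappa \geq 0$ of multiplicities $d_1,\ldots,d_\kappa$, and let $\vc{X}' \to \vc{X}$ with $\vc{X}'$ simple and full-rank. Because the ordered singular values are $1$-Lipschitz in the matrix (Weyl), once $\vc{X}'$ is close enough its (distinct, positive) singular values $\{\sigma_p\}$ split into $\kappa$ clusters, the $i$-th consisting of exactly $d_i$ values, all tending to $s_i$.

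Now evaluate the three groups of terms in \eqref{eq:theorem:div_simple}. First, $\sum_p f'(\sigma_p) \to \sum_i d_i f'(s_i)$ by continuity of $f'$ near each $s_i$. Second, $|m-n|\sum_p f(\sigma_p)/\sigma_p$ tends, cluster by cluster, to $|m-n|\,d_i f(s_i)/s_i$ when $s_i > 0$ and to $|m-n|\,d_i f'(0)$ when $s_i = 0$; here the hypothesis $f(0) = 0$ is what makes $f(\sigma)/\sigma \to f'(0)$ as $\sigma \to 0^+$. The genuinely delicate term is the off-diagonal sum $2\sum_{p \neq q} \sigma_p f(\sigma_p)/(\sigma_p^2 - \sigma_q^2)$, whose individual summands blow up when two singular values collide. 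The remedy is to pair the ordered pairs $(p,q)$ and $(q,p)$: writing $g(\sigma) := \sigma f(\sigma)$,
\[
  \frac{g(\sigma_p)}{\sigma_p^2 - \sigma_q^2} + \frac{g(\sigma_q)}{\sigma_q^2 - \sigma_p^2}
  = \frac{g(\sigma_p) - g(\sigma_q)}{\sigma_p^2 - \sigma_q^2},
\]
which is a difference quotient of $t \mapsto g(\sqrt{t})$ evaluated at $t = \sigma_p^2,\sigma_q^2$. By the mean value theorem this converges to $g'(s_i)/(2 s_i)$ when $p,q$ lie in a common cluster with $s_i > 0$, and to $f'(0)$ when they lie in the zero cluster (again via $f(0) = 0$, which makes $t \mapsto \sqrt{t}\,f(\sqrt{t})$ differentiable at $0$ with derivative $f'(0)$); for $p,q$ in distinct clusters $i \neq j$ the summand tends to $s_i f(s_i)/(s_i^2 - s_j^2)$. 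Counting $\binom{d_i}{2}$ intra-cluster unordered pairs and $d_i d_j$ cross pairs and using $g'(s) = f(s) + s f'(s)$, the off-diagonal sum therefore contributes $\binom{d_i}{2}\bigl(f'(s_i) + f(s_i)/s_i\bigr)$ per nonzero cluster, $2\binom{d_i}{2} f'(0)$ for the zero cluster, and $2\sum_{i\neq j} d_i d_j\, s_i f(s_i)/(s_i^2 - s_j^2)$ across clusters.

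Adding the three contributions cluster by cluster gives exactly \eqref{eq:theorem:div_all}; repeating the argument from \eqref{eq:theorem:div_simple_cpx}, where the coefficients of the three groups differ, yields \eqref{eq:theorem:div_all_cpx}. Finally, the limit depends only on the pairs $\{(s_i,d_i)\}$ attached to the target matrix, so it is independent of the approximating sequence; running the same computation with arbitrary approximants $\vc{X}_k \to \vc{X}$ — whose distinct singular values again merge into the $s_i$ with multiplicities adding up — shows that the formula \eqref{eq:theorem:div_all} (resp.\ \eqref{eq:theorem:div_all_cpx}) is continuous on all of $\R^{m\times n}$ (resp.\ $\C^{m\times n}$), completing the argument. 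The one nontrivial point, and where care is needed, is the intra-cluster part of the off-diagonal sum: one must recognize that the right object is the increment of $\sigma \mapsto \sigma f(\sigma)$ written in the variable $\sigma^2$, which simultaneously explains why the hypothesis $f(0) = 0$ is needed at a vanishing singular value and produces the binomial coefficients $\binom{d_i}{2}$ in the final formulas; everything else is routine continuity bookkeeping.
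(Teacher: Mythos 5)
Your proposal is correct, and its overall architecture coincides with the paper's: restrict to the dense set of simple, full-rank matrices where Theorems~\ref{theorem:div_simple} and~\ref{theorem:div_simple_cpx} apply, pass to the limit along approximating sequences, and handle the singular off-diagonal terms by symmetrizing the ordered pairs $(p,q)$ and $(q,p)$ into the difference quotient $\bigl(\sigma_p f(\sigma_p)-\sigma_q f(\sigma_q)\bigr)/(\sigma_p^2-\sigma_q^2)$; your cluster counting ($\binom{d_i}{2}$ intra-cluster unordered pairs, $d_i d_j$ cross pairs) and the resulting coefficients match \eqref{eq:theorem:div_all}--\eqref{eq:theorem:div_all_cpx} exactly. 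Where you genuinely diverge is in how that difference quotient's limit is evaluated. The paper decomposes it algebraically into a $\bigl(f(\sigma+\delta_i^k)-f(\sigma+\delta_j^k)\bigr)/(\delta_i^k-\delta_j^k)$ piece plus a remainder involving $\rho_{ij}^k=\delta_j^k/\delta_i^k$, and must therefore impose a \emph{non-tangential} approach condition $|\rho_{ij}^k-1|>c_{ij}$ to control the remainder; your route instead recognizes the quotient as an increment of $t\mapsto\sqrt{t}\,f(\sqrt{t})$ in the variable $t=\sigma^2$ and applies the mean value theorem, which requires nothing beyond continuity of $f'$ near the cluster (already implicitly used in the paper's ``easy'' diagonal terms) and handles the zero cluster via $f(0)=0$ in one stroke. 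This buys you convergence along \emph{arbitrary} sequences of simple full-rank approximants, so the claim that the formula is the continuous extension needs no restriction on how the singular values collide---a genuine, if modest, strengthening of the paper's argument. Your opening remark that the continuous representative is a valid version of the weak divergence (via local Lipschitzness and a.e.\ agreement) is also a useful justification that the paper leaves implicit, though you should be aware that the Lipschitz character of $\vc{X}\mapsto\vc{U}f(\vc{\Sigma})\vc{V}^*$ is a property of orthogonally invariant functions and not literally a composition with the (discontinuous) SVD factor maps.
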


\begin{proof}
  We only prove \eqref{eq:theorem:div_all} as the same arguments apply
  to \eqref{eq:theorem:div_all_cpx}. The key idea involves
  non-tangential limits defined below.  Suppose we have a sequence
  $\{\vc{X}_k\}$ of full-rank matrices with $\vc{X}_k\rightarrow
  \vc{X}$ as $k\rightarrow\infty$; we establish that the limit
  $\div(f(\vc{X}_k))$ is independent of the
  sequence. Since~\eqref{eq:theorem:div_simple} depends only on the
  singular values of $\vc{X}_k$, we can focus on the sequence of
  singular values.  Let $\{\sigma_i\}_{i = 1}^{n}$ be the singular
  values of $\vc{X}$, and let $\{\sigma_i^k\}_{i = 1}^{n}$ be those of
  $\vc{X}_k$. We have
    \begin{equation*}
        \div\, (f(\vc{X}_k)) = \sum_{i = 1}^{\min(m, n)} \left(f'(\sigma_i^k) + |m-n|\frac{f(\sigma_i^k)}{\sigma_i^k}\right)+ 2\sum_{i\neq j,\, i, j = 1}^{\min(m, n)} \frac{\sigma_i^k f(\sigma_i^k)}{(\sigma_i^k)^2 - (\sigma_j^k)^2}.
    \end{equation*}
    The first terms are easy to analyze. In fact,
    \begin{equation*}
        \lim_{k\rightarrow\infty} f'(\sigma_i^k) = f'(\sigma_i)\quad\text{and}\quad \lim_{k\rightarrow\infty}\frac{f(\sigma_i^k)}{\sigma_i^k} = \begin{cases}\frac{f(\sigma_i)}{\sigma_i}, & \text{if
$\sigma_i > 0$,}\\ f'(0), & \text{if $\sigma_i = 0$.}\end{cases}
    \end{equation*}
    The remaining term can be decomposed as
    \begin{equation}\label{eq:div_all_non_tangential}
      \sum_{i\neq j,\, i, j = 1}^{\min(m, n)} \frac{\sigma_i^k f(\sigma_i^k)}{(\sigma_i^k)^2 - (\sigma_j^k)^2} = \left[\sum_{i\neq j,\, \sigma_i = \sigma_j}^{\min(m, n)} + \sum_{i\neq j,\, \sigma_i \neq \sigma_j}^{\min(m, n)}\right] \frac{\sigma_i^k f(\sigma_i^k)}{(\sigma_i^k)^2 - (\sigma_j^k)^2} := I_0 + I_1.
    \end{equation}
% and the first sum can be further expressed as
%     \begin{equation}\label{eq:div_all_rhs}
%       \sum_{i\neq j,\, \sigma_i = \sigma_j}^{\min(m, n)} \frac{\sigma_i^k f(\sigma_i^k)}{(\sigma_i^k)^2 - (\sigma_j^k)^2} = \sum_{l = 1}^{\kappa} \sum_{j> i,\, \sigma_i = \sigma_j = s_l}^{\min(m,n)}  \frac{\sigma_i^k f(\sigma_i^k) - \sigma_j^k f(\sigma_j^k)}{(\sigma_i^k)^2 - (\sigma_j^k)^2}.
%     \end{equation}

    We now formalize the notion of non-tangential limit. Consider a
    pair $(i,j)$ with $i \neq j$ and $\sigma_i = \sigma_j := \sigma$,
    and define
    \begin{equation*}
      \delta^{k}_i = \sigma_i^k - \sigma,
      \quad \delta^{k}_j = \sigma_j^{k}  - \sigma
      \quad\text{and}\quad \rho^{k}_{ij} = \delta^k_{j}/\delta^k_{i}.
    \end{equation*}
 %  for $k \geq 0$, where $l$ is index of the distinct eigenvalue
%     $s_l$ of $\vc{X}$ to which $\sigma_i^k$ and $\sigma_j^k$ are
%     converging, i.e., $\sigma^{k}_i \rightarrow \sigma_i = s_l$, and
%     $\sigma^k_j\rightarrow\sigma_j = s_l$ as $k\rightarrow\infty$. The
%     use of a different index is due to the fact that $\sigma_i^k$
%     might be converging to the same $s_l$ for several values of
%     $i$.\ejc{What is $s_l$ and $l$?}\csl{Added an explanation}
%    \ejc{The notation was poor. Can you skip (4.22) and work below
%      with the notation above?}
    We say that $\vc{X}_k$ approaches $\vc{X}$ non-tangentially if
    there exists a constant $c_{ij} > 0$ such that $|\rho^k_{ij} - 1|
    > c_{ij}$. Roughly speaking, the matrices $\vc{X}_k$ approach
    $\vc{X}$ fast enough, so that they remain simple, and have
    full-rank.  We have
    \begin{eqnarray*}
        \frac{\sigma_i^k f(\sigma_i^k) - \sigma_j^k f(\sigma_j^k)}{(\sigma_i^k)^2 - (\sigma_j^k)^2} & = & \frac{\sigma}{2\sigma + \delta^k_i + \delta^k_j}\left(\frac{f(\sigma + \delta_i^k) - f(\sigma + \delta_j^k)}{\delta_i^k - \delta_j^k}\right) \\
            & + & \frac{1}{2\sigma + \delta^k_i + \delta^k_j}\left(\frac{1}{1 - \rho_{ij}^k}f(\sigma + \delta_i^k) - \frac{\rho^k_{ij}}{1 - \rho_{ij}^k}f(\sigma + \delta_j^k)\right).
    \end{eqnarray*}
    We distinguish two cases. If $\sigma > 0$, we see that
    \begin{equation*}
        \lim_{k\rightarrow\infty}\frac{\sigma_i^k f(\sigma_i^k) - \sigma_j^k f(\sigma_j^k)}{(\sigma_i^k)^2 - (\sigma_j^k)^2} = \frac{1}{2} f'(\sigma) + \frac{1}{2}\frac{f(\sigma)}{\sigma}
    \end{equation*}
    whereas when $\sigma = 0$ we obtain
    \begin{equation*}
      \lim_{k\rightarrow\infty}\frac{\sigma_i^k f(\sigma_i^k) - \sigma_j^k f(\sigma_j^k)}{(\sigma_i^k)^2 - (\sigma_j^k)^2} = f'(0)
    \end{equation*}
    and, therefore,
    \begin{equation*}
      I_0 = \sum_{i:\, s_i = 0} {d_i \choose 2}f'(0) + \frac{1}{2}\sum_{i:\, s_i > 0} {d_i \choose 2}\left(f'(s_i) + \frac{f(s_i)}{s_i}\right).
    \end{equation*}
    The second sum $I_1$ in~\eqref{eq:div_all_non_tangential} is
    easier to analyze, as it can be seen that
    \begin{equation*}
        \lim_{k\rightarrow\infty}\sum_{i\neq j,\, \sigma_i \neq \sigma_j}^{\min(m, n)}\frac{\sigma_i^k
        f(\sigma_i^k)}{(\sigma_i^k)^2 - (\sigma_j^k)^2} = \sum_{i\neq j,\, i,j = 1}^{\kappa} d_i d_j\frac{s_i
        f(s_i)}{s_i^2 - s_j^2},
    \end{equation*}
    concluding the proof.
\end{proof}

\section{Discussion}
\label{section:discussion}

Beyond MRI, SVT denoising also has potential application for medical
imaging modalities outside of MRI. For example, in perfusion X-ray
computed tomography (CT) studies, where some anatomical region (e.g.,
the kidneys) is repeatedly scanned to visualize hemodynamics, X-ray
tube energy is routinely lowered to limit the ionizing radiation dose
delivered to the patient. However, this can result in a substantial
increase of image noise. Recent works (e.g., \cite{Borsdorf2008}) have
shown that retrospective denoising of low-dose CT data can often yield
diagnostic information comparable to full-dose images. It may be
possible to adapt SVT, and the proposed risk estimators, for the CT
dose reduction problem as well.

Following the successful application of compressive sensing
theory~\cite{Candes2006,Donoho2006} for accelerated MRI (e.g.,
~\cite{Lustig2007,Trzasko2009}), the use of low-rank constraints for
undersampled MRI reconstruction is also of increasing interest. For
the dynamic and parametric acquisition scenarios described above, the
generalization from denoising to undersampled reconstruction is
straightforward in light of recent developments on the topic of matrix
completion~\cite{CR08,Recht2010}. To date, low-rank reconstruction
methods have been deployed for accelerated cardiac
imaging~\cite{Haldar2010,Lingala2011,Trzasko2011} and dynamic
contrast-enhanced MRI (DCE-MRI) of the breast~\cite{Haldar2010}. More
recently, it has been demonstrated that reconstructing accelerated
parallel MRI data under low-rank
constraints~\cite{Lustig2010,Trzasko2012b} can obviate the need for
expensive and potentially error-prone patient-specific calibration
procedures as well as facilitate previously impractical acquisition
strategies. As with the denoising application, parameter selection is
an outstanding challenge in undersampled reconstruction
problems. Unlike the denoising problem, only incomplete noisy data is
available and Stein's method is not directly applicable. Recently, for
undersampled MRI reconstruction, Ramani et al.~\cite{Ramani2012}
demonstrated that Stein's method can be used to generate an unbiased
risk estimator for the predicted MSE of $\ell_{1}$-minimization
reconstructions, and that this can be used for automated parameter
optimization. An interesting future direction of research will lie in
determining if the models developed in this work facilitate
development of a similar framework for the low-rank matrix recovery
problem.

\small
\subsection*{Acknowledgements}
E.~C.~is partially supported by AFOSR under grant FA9550-09-1-0643, by
ONR under grant N00014-09-1-0258 and by a gift from the Broadcom
Foundation.  C.~S-L.~is partially supported by a Fulbright-CONICYT
Scholarship and by the Simons Foundation. J.~D.~T.~and
the Mayo Clinic Center for Advanced Imaging Research are partially
supported by National Institute of Health grant RR018898.
E.~C.~would like to thank Rahul Mazumder for fruitful discussions about this
project.

% \ejc{Carlos, please use a consistent style for the refs.}

% ======    REFERENCES
\bibliographystyle{plain}
\bibliography{paper}

% ======    APPENDIX
% \appendix
% \newpage
%
% \section{Appendix}
% \label{section:appendix}
%
% \input{appendix}

\end{document}